\DeclareMathAlphabet\mathbfcal{OMS}{cmsy}{b}{n}
\algrenewcommand\algorithmicrequire{\textbf{Input:}}
\algrenewcommand\algorithmicensure{\textbf{Output:}}
\newcommand{\nats}{\mathbb{N}}
\newcommand{\natswith}{\nats_{0}}
\newcommand{\reals}{\mathbb{R}}
\newcommand{\realspos}{\reals_{>0}}
\newcommand{\realsnonneg}{\reals_{\geq 0}}
\newcommand{\states}{\mathcal{X}}
\newcommand{\observs}{\mathcal{Y}}
\newcommand{\lexp}{\underline{\mathbb{E}}_{\rateset,\mathcal{M}}}
\newcommand{\uexp}{\overline{\mathbb{E}}_{\rateset,\mathcal{M}}}
\newcommand{\gambles}{\mathcal{L}}
\newcommand{\gamblesX}{\gambles(\states)} 
\newcommand{\ind}[1]{\mathbb{I}_{#1}}
\newcommand{\rateset}{\mathcal{Q}}
\newcommand{\lrate}{\underline{Q}}
\newcommand{\asa}{\Leftrightarrow}
\newcommand{\norm}[1]{\left\lVert #1 \right\rVert}
\newcommand{\abs}[1]{\left\vert #1 \right\vert}
\newcommand{\coloneqq}{:\!=}
\def\presuper#1#2%
\newcommand{\customlabel}[2]{%
   \protected@write \@auxout {}{\string \newlabel {#1}{{#2}{\thepage}{#2}{#1}{}} }%
   \hypertarget{#1}{\emph{#2}\!}
}
\newcommand{\BibTeX}{\textsc{B\kern-0.1emi\kern-0.017emb}\kern-0.15em\TeX}
\begin{document}

\title{Efficient Computation of Updated Lower Expectations for\\ Imprecise Continuous-Time Hidden Markov Chains}
\author{\name Thomas Krak \email t.e.krak@uu.nl\\
\addr $^\dagger$Department of Information and Computing Sciences\\
Utrecht University (The Netherlands)
\AND
\name Jasper De Bock \email jasper.debock@ugent.be\\
\addr Department of Electronics and Information Systems, imec, IDLab\\
Ghent University (Belgium)
\AND
\name Arno Siebes$^\dagger$ \email a.p.j.m.siebes@uu.nl
}
\maketitle
\vspace{-7pt}

\begin{abstract}We consider the problem of performing inference with \emph{imprecise continuous-time hidden Markov chains}, that is, \emph{imprecise continuous-time Markov chains} that are augmented with random \emph{output} variables whose distribution depends on the hidden state of the chain. The prefix `imprecise' refers to the fact that we do not consider a classical continuous-time Markov chain, but replace it with a robust extension that allows us to represent various types of model uncertainty, using the theory of \emph{imprecise probabilities}. The inference problem amounts to computing lower expectations of functions on the state-space of the chain, given observations of the output variables.
We develop and investigate this problem with very few assumptions on the output variables; in particular, they can be chosen to be either discrete or continuous random variables. Our main result is a polynomial runtime algorithm to compute the lower expectation of functions on the state-space at any given time-point, given a collection of observations of the output variables.\vspace{-3pt}
\end{abstract}

\section{Introduction}\label{sec:introduction}%\vspace{-5pt}

A continuous-time Markov chain (CTMC) is a stochastic model that describes the evolution of a dynamical system under uncertainty. Specifically, it provides a probabilistic description of how such a system might move through a finite state-space, as time elapses in a continuous fashion. There are various ways in which this model class can be extended.

One such extension are continuous-time \emph{hidden} Markov chains (CTHMC's)~\citep{wei2002continuous}. Such a CTHMC is a stochastic model that contains a continuous-time Markov chain as a latent variable---that is, the actual realised behaviour of the system cannot be directly observed. This model furthermore incorporates random \emph{output} variables, which depend probabilistically on the current state of the system, and it is rather realisations of these variables that one observes. Through this stochastic dependency between the output variables and the states in which the system might be, one can perform inferences about quantities of interest that depend on these states---even though they have not been, or cannot be, observed directly.

Another extension of CTMC's, arising from the theory of \emph{imprecise probabilities}~\citep{Walley:1991vk}, are \emph{imprecise continuous-time Markov chains} (ICTMC's)~\citep{Skulj:2015cq, krak2016ictmc}. This extension can be used to robustify against uncertain numerical parameter assessments, as well as the simplifying assumptions of time-homogeneity and that the model should satisfy the Markov property. 
Simply put, an ICTMC is a \emph{set} of continuous-time stochastic processes, some of which are ``traditional'' time-homogeneous CTMC's. However, this set also contains more complicated processes, which are non-homogeneous and do not satisfy the Markov property.

In this current work, we combine these two extensions by considering \emph{imprecise continuous-time hidden Markov chains}---a stochastic model analogous to a CTHMC, but where the latent CTMC is replaced by an ICTMC. We will focus in particular on practical aspects of the corresponding inference problem. That is, we provide results on how to efficiently compute lower expectations of functions on the state-space, given observed realisations of the output variables. 

%Throughout, all results are stated without proof. We have made available an extended version of this work~\citep{krak2017icthmc}, which includes an appendix containing the proofs of all our results.
The proofs of all results are gathered in an appendix, where they are largely ordered by their chronological appearance in the main text.

\subsection{Related Work}\label{sec:related}

As should be clear from the description of CTHMC's in Section~\ref{sec:introduction}, this model class extends the well-known (discrete-time) \emph{hidden Markov models} (HMM's) to a continuous-time setting. In the same sense, the present subject of ICTHMC's can be seen to extend previous work on \emph{imprecise hidden Markov models} (iHMM's)~\citep{deCooman:2010gd} to a continuous-time setting. Hence, the model  under consideration should hopefully be intuitively clear to readers familiar with (i)HMM's. 

The main novelty of this present work is therefore not the (somewhat obvious) extension of iHMM's to ICTHMC's, but rather the application of recent results on ICTMC's~\citep{krak2016ictmc} to derive an efficient solution to the continuous-time analogue of inference in iHMM's. The algorithm that we present is largely based on combining these results with the ideas behind the MePiCTIr algorithm~\citep{deCooman:2010gd} for inference in credal trees under epistemic irrelevance.

A second novelty of the present paper is that, contrary to most of the work in the literature on iHMM's, we allow the output variables of the ICTHMC to be either discrete or continuous. This allows the model to be applied to a much broader range of problems. At the same time, it turns out that this does not negatively influence the efficiency of the inference algorithm.

\section{Preliminaries}\label{sec:prelim}

We denote the reals as $\reals$, the non-negative reals as $\realsnonneg$, and the positive reals as $\realspos$. The natural numbers are denoted by $\nats$, and we define $\natswith\coloneqq\nats\cup\{0\}$.

Since we are working in a continuous-time setting, a \emph{time-point} is an element of $\realsnonneg$, and these are typically denoted by $t$ or $s$. We also make extensive use of non-empty, finite sequences of time points $u\subset\realsnonneg$. These are taken to be ordered, so that they may be written $u=t_0,\ldots,t_n$, for some $n\in\natswith$, and such that then $t_i<t_j$ for all $i,j\in\{0,\ldots,n\}$ for which $i< j$. Such sequences are usually denoted by $u$ or $v$, and we let $\mathcal{U}$ be the entire set of them.

Throughout, we consider some fixed, finite state space $\states$. A generic element of $\states$ will be denoted by $x$. When considering the state-space at a specific time $t$, we write $\states_t\coloneqq\states$, and $x_t$ denotes a generic state-assignment at this time. When considering multiple time-points $u$ simultaneously, we define the joint state-space as $\states_u\coloneqq\prod_{t_i\in u}\states_{t_i}$, of which $x_u=(x_{t_0},\ldots,x_{t_n})$ is a generic element.

For any $u\in\mathcal{U}$, we let $\gambles(\states_u)$ be the set of all real-valued functions on $\states_u$.

\subsection{Imprecise Continuous-Time Markov Chains}\label{subsec:ictmc}

We here briefly recall the most important properties of imprecise continuous-time Markov chains (ICTMC's), following the definitions and results of~\citet{krak2016ictmc}. For reasons of brevity, we provide these definitions in a largely intuitive, non-rigorous manner, and refer the interested reader to this earlier work for an in-depth treatise on the subject.

An ICTMC will be defined below as a specific set of \emph{continuous-time stochastic processes}. Simply put, a continuous-time stochastic process is a joint probability distribution over random variables $X_t$, for each time $t\in\realsnonneg$, where each random variable $X_t$ takes values in $\states$.

It will be convenient to have a way to numerically parameterise such a stochastic process $P$. For this, we require two different kinds of parameters. First, we need the specification of the initial distribution $P(X_0)$ over the state at time zero; this simply requires the specification of some probability mass function on $\states_0$. Second, we need to parameterise the dynamic behaviour of the model.

In order to describe this dynamic behaviour, we require the concept of a \emph{rate matrix}. Such a rate matrix $Q$ is a real-valued $\lvert\states\rvert\times\lvert\states\rvert$ matrix, whose off-diagonal elements are non-negative, and whose every row sums to zero---thus, the diagonal elements are non-positive. Such a rate matrix may be interpreted as describing the ``rate of change'' of the conditional probability $P(X_s\,\vert\,X_t,X_u=x_u)$, when $s$ is close to $t$. In this conditional probability, it is assumed that $u<t$, whence the state assignment $x_u$ is called the \emph{history}. For small enough $\Delta\in\realspos$, we may now write that
\begin{equation*}
P(X_{t+\Delta}\,\vert\,X_t,X_u=x_u) \approx \bigl[I + \Delta Q_{t,x_u}\bigr](X_t, X_{t+\Delta})\,,
\end{equation*}
for some rate matrix $Q_{t,x_u}$, where $I$ denotes the $\lvert\states\rvert\times\lvert\states\rvert$ identity matrix, and where the quantity $[I + \Delta Q_{t,x_u}](X_t,X_{t+\Delta})$ denotes the element at the $X_t$-row and $X_{t+\Delta}$-column of the matrix $I + \Delta Q_{t,x_u}$. Note that in general, this rate matrix $Q_{t,x_u}$ may depend on the specific time $t$ and history $x_u$ at which this relationship is stated. 

If these rate matrices only depend on the time $t$ and not on the history $x_u$, i.e. if $Q_{t,x_u}=Q_t$ for all $t$ and all $x_u$, then it can be shown that $P$ satisfies the \emph{Markov property}: $P(X_s\,\vert\,X_t,X_u)=P(X_s\,\vert\,X_t)$. In this case, $P$ is called a \emph{continuous-time Markov chain}.

Using this method of parameterisation, an \emph{imprecise continuous-time Markov chain} (ICTMC) is similarly parameterised using a \emph{set} of rate matrices $\rateset$, and a \emph{set} of initial distributions $\mathcal{M}$. The corresponding ICTMC, denoted by $\mathbb{P}_{\rateset,\mathcal{M}}$, is the set of all continuous-time stochastic processes whose dynamics can be described using the elements of $\rateset$, and whose initial distributions are consistent with $\mathcal{M}$. That is, $\mathbb{P}_{\rateset,\mathcal{M}}$ is the set of stochastic processes $P$ for which $P(X_0)\in\mathcal{M}$ and for which $Q_{t,x_u}\in\rateset$ for every time $t$ and history $x_u$.

The \emph{lower expectation} with respect to this set $\mathbb{P}_{\rateset,\mathcal{M}}$ is then defined as
\begin{equation*}
\underline{\mathbb{E}}_{\rateset,\mathcal{M}}[\cdot\,\vert\,\cdot] \coloneqq \inf\left\{ \mathbb{E}_P[\cdot\,\vert\,\cdot]\,:\, P\in\mathbb{P}_{\rateset,\mathcal{M}} \right\}\,,
\end{equation*}
where $\mathbb{E}_P[\cdot\,\vert\,\cdot]$ denotes the expectation with respect to the (precise) stochastic process $P$. The \emph{upper expectation} $\uexp$ is defined similarly, and is derived through the well-known conjugacy property $\uexp[\cdot\,\vert\,\cdot] = -\lexp[-\cdot\,\vert\,\cdot]$. Note that it suffices to focus on lower (or upper) expectations, and that \emph{lower} (and \emph{upper}) \emph{probabilities} can be regarded as a special case; for example, for any $A\subseteq\states$, we have that $\underline{P}_{\rateset,\mathcal{M}}(X_s\in A\,\vert\,X_t) \coloneqq \inf\{P(X_s\in A\vert X_t)\,:\,P\in\mathbb{P}_{\rateset,\mathcal{M}}\}=\lexp[\ind{A}(X_s)\,\vert\,X_t]$, where $\ind{A}$ is the indicator of $A$, defined for all $x\in\states$ by $\ind{A}(x)\coloneqq1$ if $x\in A$ and $\ind{A}(x)\coloneqq0$ otherwise.

In the sequel, we will assume that $\mathcal{M}$ is non-empty, and that $\rateset$ is non-empty, bounded,\footnote{That is, that there exists a $c\in\realsnonneg$ such that, for all $Q\in\rateset$ and $x\in\states$, it holds that $\abs{Q(x,x)}<c$.} convex, and has \emph{separately specified rows}. This latter property states that $\rateset$ is closed under arbitrary recombination of rows from its elements; see~\citep[Definition 24]{krak2016ictmc} for a formal definition. 
Under these assumptions, $\mathbb{P}_{\rateset,\mathcal{M}}$ satisfies an \emph{imprecise Markov property}, in the sense that $\lexp[f(X_s)\,\vert\,X_t,X_u=x_u]=\lexp[f(X_s)\,\vert\,X_t]$. This property explains why we call this model an imprecise continuous-time ``Markov'' chain.

\subsection{Computing Lower Expectations for ICTMC's}\label{subsec:ICTMC_computations}

Because we want to focus in this paper on providing efficient methods of computation, we here briefly recall some previous results from~\citet{krak2016ictmc} about how to compute lower expectations for ICTMC's. We focus in particular on how to do this for functions on a single time-point. 

To this end, it is useful to introduce the \emph{lower transition rate operator} $\lrate$ that corresponds to $\rateset$. This operator is a map from $\gamblesX$ to $\gamblesX$, defined for every $f\in\gamblesX$ by
\begin{equation}\label{eq:lower_rate_is_inf}
\left[\,\lrate f\right](x) \coloneqq \inf\left\{ \sum_{x'\in\states}Q(x,x')f(x')\,:\, Q\in\rateset \right\}
~~\text{for all $x\in\states$}.
\end{equation}

Using this lower transition rate operator $\lrate$, we can compute conditional lower expectations in the following way. For any $t,s\in\realsnonneg$, with $t\leq s$, and any $f\in\gamblesX$, it has been shown that
\begin{equation*}
\lexp[f(X_s)\,\vert\,X_t] = \underline{\mathbb{E}}_\rateset[f(X_s)\,\vert\,X_t] \coloneqq \lim_{n\to+\infty}\left[I+\frac{(s-t)}{n}\lrate\right]^n f\,,
\end{equation*}
where $I$ is the identity operator on $\gamblesX$, in the sense that $I g=g$ for every $g\in\gamblesX$.
The notation $\underline{\mathbb{E}}_\rateset$ is meant to indicate that this conditional lower expectation only depends on $\rateset$, and not on $\mathcal{M}$. The above implies that for large enough $n\in\nats$, and writing $\Delta\coloneqq \nicefrac{(s-t)}{n}$, we have
\begin{equation}\label{eq:lower_exp_in_steps}
\lexp[f(X_s)\,\vert\,X_t] = \underline{\mathbb{E}}_\rateset[f(X_s)\,\vert\,X_t] \approx \bigl[I + \Delta\lrate\,\bigr]^nf\,.
\end{equation}
Concretely, this means that if one is able to solve the minimisation problem in Equation~\eqref{eq:lower_rate_is_inf}---which is relatively straightforward for ``nice enough'' $\rateset$, e.g., convex hulls of finite sets of rate matrices---then one can also compute conditional lower expectations using the expression in Equation~\ref{eq:lower_exp_in_steps}. In practice, we do this by first computing $f_1'\coloneqq \lrate f$ using Equation~\eqref{eq:lower_rate_is_inf}, and then computing $f_1\coloneqq f + \Delta f_1'$. Next, we compute $f_2'\coloneqq \lrate f_1$, from which we obtain $f_2\coloneqq f_1 + \Delta f_2'$. Proceeding in this fashion, after $n$ steps we then finally obtain $f_n \coloneqq [I+\Delta\lrate]f_{n-1} = \bigl[I+\Delta\lrate\bigr]^nf$, which is roughly the quantity of interest $\underline{\mathbb{E}}_{\rateset,\mathcal{M}}[f(X_s)\,\vert\,X_t]$ provided that $n$ was taken large enough.\footnote{We refer the reader to~\citep[Proposition 8.5]{krak2016ictmc} for a theoretical bound on the minimum such $n$ that is required to ensure a given maximum error on the approximation in Equation~\eqref{eq:lower_exp_in_steps}. We here briefly note that this bound scales polynomially in every relevant parameter. This means that $\lexp[f(X_s)\,\vert\,X_t]$ is numerically computable in polynomial time, provided that $\rateset$ is such that Equation~\eqref{eq:lower_rate_is_inf} can also be solved in the same time-complexity order.}

As noted above, the conditional lower expectation $\lexp[f(X_s)\vert X_t]$ only depends on $\rateset$. Similarly, and in contrast, the unconditional lower expectation at time zero only depends on $\mathcal{M}$. That is,
\begin{equation}\label{eq:unconditional_time_zero}
\lexp[f(X_0)] = \underline{\mathbb{E}}_{\mathcal{M}}[f(X_0)] \coloneqq \inf\left\{ \sum_{x\in\states}p(x)f(x)\,:\,p\in\mathcal{M} \right\}\,.
\end{equation}
Furthermore, the unconditional lower expectation at an arbitrary time $t\in\realsnonneg$, is given by
\begin{equation}\label{eq:unconditional_lower_exp}
\underline{\mathbb{E}}_{\rateset,\mathcal{M}}[f(X_t)] = \underline{\mathbb{E}}_{\mathcal{M}}\bigl[\underline{\mathbb{E}}_{\rateset}[f(X_t)\,\vert\,X_0]\bigr]\,,
\end{equation}
which can therefore be computed by combining Equations~\eqref{eq:lower_exp_in_steps} and~\eqref{eq:unconditional_time_zero}. In particular, from a practical point of view, it suffices to first compute the conditional lower expectation $\underline{\mathbb{E}}_{\rateset}[f(X_t)\,\vert\,X_0]$, using Equation~\eqref{eq:lower_exp_in_steps}. Once this quantity is obtained, it remains to compute the right-hand side of Equation~\eqref{eq:unconditional_time_zero}, which again is relatively straightforward when $\mathcal{M}$ is ``nice enough'', e.g., the convex hull of some finite set of probability mass functions.

\section{Imprecise Continuous-Time Hidden Markov Chains}\label{sec:icthmc}

In this section, we construct the \emph{hidden} model that is the subject of this paper. Our aim is to augment the stochastic processes that were introduced in the previous section, by adding random \emph{output} variables $Y_t$ whose distribution depends on the state $X_t$ at the same time point $t$.

We want to focus in this paper on the more practical aspect of solving the inference problem of interest, i.e., computing lower expectations on the state-space \emph{given some observations}.
Hence, we will assume that we are given some finite sequence of time points, and we then only consider these time points in augmenting the model. 
In order to disambiguate the notation, we will henceforth denote stochastic processes as $P_\states$, to emphasise that they are only concerned with the state-space.
\vspace{-4pt}

\subsection{Output Variables}\label{sec:observs}

We want to augment stochastic processes with random ``output variables'' $Y_t$, whose distribution depends on the state $X_t$. We here define the corresponding (conditional) distribution.

We want this definition to be fairly general, and in particular do not want to stipulate that $Y_t$ should be either a discrete or a continuous random variable. To this end, we simply consider some set $\observs$ to be the outcome space of the random variable. We then let $\Sigma$ be some algebra on $\observs$. Finally, for each $x\in\states$, we consider some finitely (and possibly $\sigma$-)additive probability measure $P_{\observs\vert\states}(\cdot\vert x)$ on $(\observs,\Sigma)$, with respect to which the random variable $Y_t$ can be defined.
%\vspace{-4pt}

\begin{definition}
An \emph{output model} is a tuple $(\observs,\Sigma,P_{\observs\vert \states})$, where $\observs$ is an outcome space, $\Sigma$ is an algebra on $\observs$, and, for all $x\in\states$, $P_{\observs\vert\states}(\cdot\vert x)$ is a finitely additive probability measure on $(\observs,\Sigma)$.
\end{definition}
%\vspace{-4pt}

When considering (multiple) explicit time points, we use notation analogous to that used for states; so, $\observs_t\coloneqq\observs$ for any time $t\in\realsnonneg$, and for any $u\in\mathcal{U}$, we write $\observs_u\coloneqq \prod_{t\in u}\observs_{t}$. 

We let $\Sigma_u$ denote the set of all events of the type $O_u=\times_{t\in u}O_t$, where, for all $t\in u$, $O_{t}\in\Sigma$. 
This set $\Sigma_u$ lets us describe observations using assessments of the form $(Y_t\in O_t \text{~for all $t\in u$})$.   
For any $O_u\in\Sigma_u$ and $x_u\in\states_u$, we also adopt the shorthand notation $P_{\observs\vert\states}(O_u\vert x_u)\coloneqq \prod_{t\in u}P_{\observs\vert\states}(O_t\vert x_t)$.
\vspace{-4pt}

\subsection{Augmented Stochastic Processes}\label{sec:aug_stochastic_processes}
We now use this notion of an output model to define the stochastic model $P$ that corresponds to a---precise---continuous-time \emph{hidden} stochastic process. 
So, consider some fixed output model $(\observs,\Sigma,P_{\observs\vert\states})$, some fixed continuous-time stochastic process $P_\states$ and some fixed, non-empty and finite sequence of time-points $u\in\mathcal{U}$ on which observations of the outputs may take place. 

We assume that $Y_t$ is conditionally independent of \emph{all} other variables, given the state $X_t$. This means that the construction of the augmented process $P$ is relatively straightforward; we can simply multiply $P_{\observs\vert\states}(\cdot\,\vert\,X_t)$ with any distribution $P_\states(X_t,\cdot)$ that includes $X_t$ to obtain the joint distribution including $Y_t$: for any $t\in u$ and $v\in\mathcal{U}$ such that $t\notin v$, any $x_t\in\states_t$ and $x_v\in\states_v$, and any $O_t\in\Sigma$:
\vspace{-2pt}
\begin{equation*}
P(Y_t\in O_t,X_t=x_t,X_v=x_v) \coloneqq P_{\observs\vert\states}(O_t\,\vert\,x_t)P_\states(X_t=x_t,X_v=x_v)\,.\vspace{-2pt}
\end{equation*}
Similarly, when considering multiple output observations at once---say for the entire sequence $u$---then for any $v\in\mathcal{U}$ such that $u\cap v=\emptyset$, any $x_u\in\states_u$ and $x_v\in\states_v$, and any $O_u\in\Sigma_u$:
\vspace{-2pt}
\begin{equation*}
P(Y_u\in O_u,X_u=x_u, X_v=x_v) \coloneqq P_{\observs\vert\states}(O_{u}\,\vert\,x_{u})P_\states(X_u=x_u,X_v=x_v)\,.\vspace{-2pt}
\end{equation*}
Other probabilities can be derived by appropriate marginalisation.
We denote the resulting augmented stochastic process as $P=P_{\observs\vert\states}\otimes P_\states$,
for the specific output model $P_{\observs\vert\states}$ and stochastic process $P_\states$ that were taken to be fixed in this section.
\vspace{-8pt}

\subsection{Imprecise Continuous-Time Hidden Markov Chains}\label{subsec:ICTHMC}

An \emph{imprecise continuous-time hidden Markov chain} (ICTHMC) is a set of augmented stochastic processes, obtained by augmenting all processes in an ICTMC with some given output model.
\begin{definition}\label{def:hidden_ictmc}
Consider any ICTMC $\mathbb{P}_{\rateset,\mathcal{M}}$, and any output model $(\observs,\Sigma,P_{\observs\vert\states})$. Then, the corresponding \emph{imprecise continuous-time hidden Markov chain} (ICTHMC) $\mathcal{Z}$ is the set of augmented stochastic processes that is defined by
$\mathcal{Z} \coloneqq \left\{ P_{\observs\vert\states}\otimes P_{\states} \,:\, P_{\states}\in\mathbb{P}_{\rateset,\mathcal{M}}\right\}$.
The lower expectation with respect to $\mathcal{Z}$ will be denoted by $\underline{\mathbb{E}}_\mathcal{Z}$.
\end{definition}
Note that we leave the parameters $\mathcal{M}$, $\rateset$ and $P_{\observs\vert\states}$ implicit in the notation of the ICTHMC $\mathcal{Z}$---we will henceforth take these parameters to be fixed.

Also, the output model is taken to be precise, and shared by all processes in the set. One further generalisation that we aim to make in the future is to allow for an imprecise specification of this output model. However, this would force us into choosing an appropriate notion of independence; e.g., whether to enforce the independence assumptions made in Section~\ref{sec:aug_stochastic_processes}, leading to strong or complete independence, or to only enforce the lower envelopes to have these independence properties, leading to epistemic irrelevance. It is currently unclear which choice should be preferred, e.g. with regard to computability, so at present we prefer to focus on this simpler model.
\vspace{-8pt}

\section{Updating the Model}\label{sec:updating_model}

%In the context of \emph{hidden} (continuous-time) Markov chains, it is typically assumed that the state $X_t$ that is obtained by the process at time $t$, cannot be directly observed---hence the term ``hidden''. Rather, we can only observe realisations of the output variable $Y_t$. 

Suppose now that we have observed that some event $(Y_u\in O_u)$ has taken place, with $O_u\in\Sigma_u$. We here use the terminology that we \emph{update} our model with these observations, after which the updated model reflects our revised beliefs about some quantity of interest. These updated beliefs, about some function $f\in\gambles(\states_v)$, say, are then denoted by
$\mathbb{E}_P[f(X_v)\,\vert\,Y_u\in O_u]$
or $\underline{\mathbb{E}}_\mathcal{Z}[f(X_v)\,\vert\,Y_u\in O_u]$, 
depending on whether we are considering a precise or an imprecise model. In this section, we provide definitions and alternative expressions for such updated (lower) expectations.
\vspace{-8pt}

\subsection{Observations with Positive (Upper) Probability}\label{subsec:pos_prob}

When our assertion $(Y_u\in O_u)$ about an observation at time points $u$ has positive probability, we can---in the precise case---update our model by application of Bayes' rule. The following gives a convenient expression for the updated expectation $\mathbb{E}_P[f(X_v)\,\vert\,Y_u\in O_u]$, which makes use of the independence assumptions in Section~\ref{sec:aug_stochastic_processes} for augmented stochastic processes.
%\vspace{-4pt}
\begin{proposition}\label{prop:precise_conditioning_for_positive}
Let $P$ be an augmented stochastic process and consider any $u,v\in\mathcal{U}$, $O_u\in\Sigma_u$ and $f\in\gambles(\states_v)$. Then the updated expectation is given by
\begin{equation*}
\mathbb{E}_P[f(X_v)\,\vert\,Y_u\in O_u] \coloneqq \sum_{x_v\in\states_v}f(x_v)\frac{P(X_v=x_v, Y_u\in O_u)}{P(Y_u\in O_u)} = \frac{\mathbb{E}_{P_\states}[f(X_v)P_{\observs\vert\states}(O_u\vert X_u)]}{\mathbb{E}_{P_\states}[P_{\observs\vert\states}(O_u\,\vert\,X_u)]}\,,
\end{equation*}
whenever $P(Y_u\in O_u)=\mathbb{E}_{P_\states}[P_{\observs\vert\states}(O_u\,\vert\,X_u)]>0$, and is left undefined, otherwise.
\end{proposition}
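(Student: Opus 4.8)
The plan is to observe that the leftmost equality is simply the definition of the updated expectation via Bayes' rule, which is legitimate precisely because the conditioning event has positive probability; so the real content of the proposition is the rightmost equality, together with the claimed identity $P(Y_u\in O_u)=\mathbb{E}_{P_\states}[P_{\observs\vert\states}(O_u\vert X_u)]$. Both of these follow from the defining relations of the augmented process $P=P_{\observs\vert\states}\otimes P_\states$ given in Section~\ref{sec:aug_stochastic_processes}, after some bookkeeping over the time-point sequences. When $P(Y_u\in O_u)=0$ there is nothing to prove, since both sides are then undefined.

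First I would dispose of the denominator. Writing $P(Y_u\in O_u)$ as the finite sum over $x_u\in\states_u$ of $P(Y_u\in O_u,X_u=x_u)$ and applying the defining relation of the augmented process, each summand equals $P_{\observs\vert\states}(O_u\vert x_u)\,P_\states(X_u=x_u)$; summing over $x_u$ then yields $\mathbb{E}_{P_\states}[P_{\observs\vert\states}(O_u\vert X_u)]$, which is exactly the stated identity. (If one is uneasy about invoking that relation with an empty auxiliary sequence, the same conclusion is reached by first introducing an arbitrary time point not in $u$, applying the relation there, and then marginalising it out.)

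Next I would treat the numerator $\mathbb{E}_{P_\states}[f(X_v)P_{\observs\vert\states}(O_u\vert X_u)]$. Let $w\in\mathcal{U}$ be the ordered sequence consisting of all time points in $u\cup v$, and split $w$ into $u$ and $w\setminus u$, the latter being disjoint from $u$. By the defining relations of the augmented process — marginalising away the overlap $v\cap u$, and if necessary introducing and removing an auxiliary time point as above — one obtains, for every $x_w\in\states_w$ with restrictions $x_u$ to $u$ and $x_v$ to $v$, that $P(X_w=x_w,Y_u\in O_u)=P_{\observs\vert\states}(O_u\vert x_u)\,P_\states(X_w=x_w)$. Substituting this into $\mathbb{E}_{P_\states}[f(X_v)P_{\observs\vert\states}(O_u\vert X_u)]=\sum_{x_w\in\states_w}f(x_v)\,P_{\observs\vert\states}(O_u\vert x_u)\,P_\states(X_w=x_w)$ and then grouping the sum over $x_w$ according to the value of its restriction $x_v$ gives $\sum_{x_v\in\states_v}f(x_v)\,P(X_v=x_v,Y_u\in O_u)$. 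Dividing by $P(Y_u\in O_u)$ — permissible since it is positive — and comparing with the definition in the first display completes the argument.

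I expect the only genuine source of friction to be the bookkeeping with the time-point sequences — in particular allowing $u$ and $v$ to overlap, or one to contain the other, so that the defining equations, which are phrased for disjoint sequences, can still be applied via marginalisation. Everything else reduces to finite manipulations over the finite joint state spaces, so finite additivity of the output measures suffices and no measure-theoretic subtleties arise.
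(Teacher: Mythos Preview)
Your proposal is correct and follows essentially the same route as the paper: both arguments handle the denominator by the identity $P(Y_u\in O_u)=\mathbb{E}_{P_\states}[P_{\observs\vert\states}(O_u\vert X_u)]$, and both handle the numerator by passing to a sum over the joint state space $\states_{u\cup v}$, re-indexing so that the augmented-process defining relation (which is stated for disjoint sequences) can be applied, and then collapsing back. The only cosmetic difference is that the paper sets its auxiliary sequence to $u\setminus v$ and transforms the full fraction in one chain, whereas you set it to $u\cup v$ and treat numerator and denominator separately; your identification of the overlap bookkeeping as the sole subtlety is exactly right.
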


Having defined above how to update all the precise models $P\in\mathcal{Z}$, we will now update the imprecise model through \emph{regular extension}~\citep{Walley:1991vk}. This corresponds to simply discarding from $\mathcal{Z}$ those precise models that assign zero probability to $(Y_u\in O_u)$, updating the remaining models, and then computing their lower envelope.

\begin{definition}\label{def:reg_ext_pos}
Let $\mathcal{Z}$ be an ICTHMC and consider any $u,v\in\mathcal{U}$, $O_u\in\Sigma_u$ and $f\in\gambles(\states_v)$. Then the updated lower expectation is defined by
\begin{equation*}
\underline{\mathbb{E}}_{\mathcal{Z}}\bigl[f(X_v)\,\vert\,Y_u\in O_u\bigr] \coloneqq \inf\bigl\{ \mathbb{E}_P[f(X_v)\,\vert\,Y_u\in O_u]\,:\, P\in\mathcal{Z},\, P(Y_u\in O_u)>0 \bigr\}\,,
\end{equation*}
whenever $\overline{P}_\mathcal{Z}(Y_u\in O_u)=\uexp[P_{\observs\vert\states}(O_u\,\vert\,X_u)]>0$, and is left undefined, otherwise.
\end{definition}

As is well known, the updated lower expectation that is obtained through regular extension satisfies Walley's \emph{generalised Bayes' rule}~\citep{Walley:1991vk}. The following proposition gives an expression for this generalised Bayes' rule, rewritten using some of the independence properties of the model. We will shortly see why this expression is useful from a computational perspective.
\begin{proposition}\label{prop:GBR_regular}
Let $\mathcal{Z}$ be an ICTHMC and consider any $u,v\in\mathcal{U}$, $O_u\in\Sigma_u$ and $f\in\gambles(\states_v)$. Then, if $\overline{P}_\mathcal{Z}(Y_u\in O_u) = \uexp[P_{\observs\vert\states}(O_u\,\vert\,X_u)] > 0$, the quantity $\underline{\mathbb{E}}_{\mathcal{Z}}\bigl[f(X_v)\,\vert\,Y_u\in O_u\bigr]$ satisfies
\begin{equation*}
\underline{\mathbb{E}}_{\mathcal{Z}}\bigl[f(X_v)\,\vert\,Y_u\in O_u\bigr] = \max\left\{\mu\in\reals\,:\, \lexp\bigl[P_{\observs\vert\states}(O_u\vert X_u)\bigl(f(X_v) - \mu\bigr)\bigr] \geq 0\right\}\,.
\end{equation*}
\end{proposition}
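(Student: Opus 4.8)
The plan is to work directly from Definition~\ref{def:reg_ext_pos} and Proposition~\ref{prop:precise_conditioning_for_positive}; this is essentially the classical argument that regular extension satisfies Walley's generalised Bayes' rule, specialised to the present model. Write $A\coloneqq\underline{\mathbb{E}}_{\mathcal{Z}}[f(X_v)\mid Y_u\in O_u]$ for the left-hand side and $h\coloneqq P_{\observs\vert\states}(O_u\vert X_u)$ for the relevant (bounded, nonnegative) gamble on $\states_u$; since $\states_u$ and $\states_v$ are finite, all expectations below are finite sums. The single claim I would establish is the equivalence
\[
\lexp\bigl[h\,(f(X_v)-\mu)\bigr]\geq 0 \quad\Longleftrightarrow\quad A\geq\mu, \qquad\text{for every }\mu\in\reals.
\]
Granting this, the set $\{\mu\in\reals : \lexp[h(f(X_v)-\mu)]\geq 0\}$ equals $(-\infty,A]$; because $f$ is bounded on the finite set $\states_v$ and $\overline{P}_\mathcal{Z}(Y_u\in O_u)>0$ ensures at least one $P\in\mathcal{Z}$ with $P(Y_u\in O_u)>0$, the value $A$ is a finite real, so this half-line attains its supremum at $\mu=A$, which is exactly the asserted identity.

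To prove the equivalence I would proceed through the chain: $\lexp[h(f(X_v)-\mu)]\geq 0$ holds iff $\mathbb{E}_{P_\states}[h(f(X_v)-\mu)]\geq 0$ for every $P_\states\in\mathbb{P}_{\rateset,\mathcal{M}}$ (by definition of $\lexp$ as an infimum over this set), iff the same holds for every $P_\states$ with $\mathbb{E}_{P_\states}[h]>0$, iff $\mathbb{E}_P[f(X_v)\mid Y_u\in O_u]\geq\mu$ for every $P=P_{\observs\vert\states}\otimes P_\states\in\mathcal{Z}$ with $P(Y_u\in O_u)>0$, iff $A\geq\mu$. The second step uses that when $\mathbb{E}_{P_\states}[h]=0$ one has $\mathbb{E}_{P_\states}[h(f(X_v)-\mu)]=\mathbb{E}_{P_\states}[h\,f(X_v)]$ and $\lvert\mathbb{E}_{P_\states}[h\,f(X_v)]\rvert\leq\norm{f}\,\mathbb{E}_{P_\states}[h]=0$, so the inequality is automatically satisfied and such processes impose no constraint. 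The third step uses Proposition~\ref{prop:precise_conditioning_for_positive}, namely $P(Y_u\in O_u)=\mathbb{E}_{P_\states}[h]$ and $\mathbb{E}_P[f(X_v)\mid Y_u\in O_u]=\mathbb{E}_{P_\states}[h\,f(X_v)]/\mathbb{E}_{P_\states}[h]$, together with $\mathbb{E}_{P_\states}[h(f(X_v)-\mu)]=\mathbb{E}_{P_\states}[h]\,(\mathbb{E}_P[f(X_v)\mid Y_u\in O_u]-\mu)$ and positivity of the factor $\mathbb{E}_{P_\states}[h]$; the last step is just the defining formula of $A$ in Definition~\ref{def:reg_ext_pos} combined with the elementary fact that an infimum of a set of reals is $\geq\mu$ exactly when every element is.

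I expect the only genuinely delicate point to be the treatment of those precise processes for which $P(Y_u\in O_u)=0$: these are discarded in the regular-extension definition of $A$, yet they still appear inside $\lexp[\cdot]$ on the right-hand side, and the argument above shows they are harmless precisely because the corresponding expectation vanishes — this is the observation that makes the regular-extension and generalised-Bayes forms agree. A second point to state with care is that the maximum in the proposition is truly attained rather than merely approached, which follows by instantiating the equivalence at $\mu=A$. Everything else — the rewriting of conditional expectations and the manipulation of infima — is routine.
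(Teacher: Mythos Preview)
Your argument is correct. The paper follows the same underlying route—reducing to the ratio formula of Proposition~\ref{prop:precise_conditioning_for_positive} and observing that processes with $\mathbb{E}_{P_\states}[h]=0$ contribute zero (its Lemma~\ref{lemma:conditioning_zero_means_bayes_zero})—but packages it as an abstract lemma (Lemma~\ref{lemma:general_regular_extension}) whose proof is more laborious: it fixes $\mu_*\coloneqq A$ and then uses separate $\epsilon$-approximation arguments to show $G(\mu_*)\geq 0$, $G(\mu_*)\leq 0$, and $G(\mu')<0$ for every $\mu'>\mu_*$, before finally handling the zero-probability processes. Your single equivalence $\lexp[h(f(X_v)-\mu)]\geq 0\Leftrightarrow A\geq\mu$ short-circuits all of this, since it identifies the level set as $(-\infty,A]$ in one stroke; the gain is cleanliness, while the paper's formulation buys reusability—its Lemma~\ref{lemma:general_regular_extension} is stated for an arbitrary nonnegative $g$ and is invoked again verbatim for the density case (Proposition~\ref{prop:GBR_for_densities_lower_zero}).
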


\subsection{Uncountable Outcome Spaces, Point Observations, and Probability Zero}\label{subsec:uncountable}

An important special case where observations have probability zero for all precise models, but where we can still make informative inferences, is when we have an uncountable outcome space $\observs$ and the observations are points $y_u\in\observs_u$---i.e., when $Y_u$ is continuous. In this case, it is common practice to define the updated expectation $\mathbb{E}_P[f(X_v)\,\vert\,Y_u=y_u]$ as a limit of \emph{conditional} expectations, where each conditioning event is an increasingly smaller region around this point $y_u$. We will start by formalising this idea in a relatively abstract way, but will shortly make this practicable. For the sake of intuition, note that we are working towards the introduction of probability density functions.

Fix any $P\in\mathcal{Z}$, consider any $y_u\in\observs_u$ and choose a sequence $\{O_u^i\}_{i\in\nats}$ of events in $\Sigma_u$ which shrink to $y_u$---i.e., such that $O_u^i\supseteq O_u^{i+1}$ for all $i\in\nats$, and such that $\cap_{i\in\nats} O_u^i=\{y_u\}$. We then define
\begin{equation}\label{eq:def:precise_updated_limit}
\mathbb{E}_P[f(X_v)\,\vert\,Y_u=y_u] \coloneqq \lim_{i\to+\infty} \mathbb{E}_P[f(X_v)\,\vert\,Y_u\in O_u^i]\,.
\end{equation}
This limit exists if there is a sequence $\{\lambda_i\}_{i\in\nats}$ in $\realspos$ such that, for every $x_u\in\states_u$, the limit
\begin{equation*}
\phi_u(y_u\,\vert\, x_u) \coloneqq \lim_{i\to+\infty}\frac{P_{\observs\vert\states}(O_u^i\,\vert\, x_u)}{\lambda_i}
\end{equation*}
exists, is real-valued---in particular, finite---and satisfies $\mathbb{E}_{P_\states}[\phi_u(y_u\,\vert\,X_u)]>0$:
\begin{proposition}\label{prop:precise_bayes_rule_densities}
Let $P$ be an augmented stochastic process and consider any $u,v\in\mathcal{U}$, $y_u\in\observs_u$ and $f\in\gambles(\states_v)$. For any $\{O_u^i\}_{i\in\nats}$ in $\Sigma_u$ that shrinks to $y_u$, if for some $\{\lambda_i\}_{i\in\nats}$ in $\realspos$ the quantity $\phi_u(y_u\,\vert\,X_u)$ exists, is real-valued, and satisfies $\mathbb{E}_{P_\states}[\phi_u(y_u\,\vert\,X_u)]>0$, then
\begin{equation}\label{eq:updated_expectation_is_limit}
\mathbb{E}_P[f(X_v)\,\vert\,Y_u=y_u] \coloneqq \lim_{i\to+\infty} \mathbb{E}_P[f(X_v)\,\vert\,Y_u\in O_u^i] = \frac{\mathbb{E}_{P_\states}[f(X_v)\phi_u(y_u\vert X_u)]}{\mathbb{E}_{P_\states}[\phi_u(y_u\,\vert\,X_u)]}\,.
\end{equation}
\end{proposition}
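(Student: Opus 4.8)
The plan is to reduce the claim to Proposition~\ref{prop:precise_conditioning_for_positive} and then exploit the finiteness of $\states$ to pass the limit through an expectation. The elementary fact I would invoke repeatedly is that, since $\states$---and hence each $\states_u$, $\states_v$---is finite, for any function $g$ of $(X_u,X_v)$ the value $\mathbb{E}_{P_\states}[g(X_u,X_v)]$ is a finite weighted sum against the joint mass function $P_\states(X_u=x_u,X_v=x_v)$; consequently, if a sequence of such functions converges pointwise on the (finite) joint state space, then the corresponding expectations converge to the expectation of the pointwise limit, with no integrability or monotonicity concerns. Applying this to $x_u\mapsto P_{\observs\vert\states}(O_u^i\,\vert\,x_u)/\lambda_i$, which by hypothesis converges pointwise to the real-valued $\phi_u(y_u\,\vert\,x_u)$, gives
\[
\lim_{i\to+\infty}\mathbb{E}_{P_\states}\!\left[\frac{P_{\observs\vert\states}(O_u^i\,\vert\,X_u)}{\lambda_i}\right] = \mathbb{E}_{P_\states}[\phi_u(y_u\,\vert\,X_u)] > 0\,,
\]
where the strict inequality is part of the hypothesis. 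Since each $\lambda_i$ is strictly positive, it follows that there is some $N\in\nats$ with $\mathbb{E}_{P_\states}[P_{\observs\vert\states}(O_u^i\,\vert\,X_u)]>0$---equivalently, $P(Y_u\in O_u^i)>0$---for all $i\geq N$. In particular the conditional expectations in Equation~\eqref{eq:def:precise_updated_limit} are well defined for $i\geq N$, so the limit there is taken over this tail.

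For every $i\geq N$, Proposition~\ref{prop:precise_conditioning_for_positive} now applies and expresses $\mathbb{E}_P[f(X_v)\,\vert\,Y_u\in O_u^i]$ as the ratio of $\mathbb{E}_{P_\states}[f(X_v)P_{\observs\vert\states}(O_u^i\,\vert\,X_u)]$ to $\mathbb{E}_{P_\states}[P_{\observs\vert\states}(O_u^i\,\vert\,X_u)]$. I would divide both numerator and denominator by $\lambda_i>0$ and let $i\to+\infty$, using the elementary fact once more: the denominator tends to $\mathbb{E}_{P_\states}[\phi_u(y_u\,\vert\,X_u)]>0$, and the numerator tends to $\mathbb{E}_{P_\states}[f(X_v)\phi_u(y_u\,\vert\,X_u)]$, which is finite because $f$ and $\phi_u(y_u\,\vert\,\cdot)$ are bounded on finite sets. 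As the denominator has a non-zero limit, the quotient converges to the quotient of the limits, which is exactly the right-hand side of Equation~\eqref{eq:updated_expectation_is_limit}; this shows both that the limit in Equation~\eqref{eq:def:precise_updated_limit} exists and that it equals the stated ratio.

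The argument is mostly bookkeeping, and the two points needing genuine care are: (i) extracting the eventual positivity of $P(Y_u\in O_u^i)$---one must first identify the positive limit of the rescaled denominator before one is even entitled to write down the conditional expectations $\mathbb{E}_P[f(X_v)\,\vert\,Y_u\in O_u^i]$ or the limit in Equation~\eqref{eq:def:precise_updated_limit}; and (ii) justifying the interchange of limit and expectation, which is legitimate here only because $\states$ is finite (so expectations are finite sums) and therefore does \emph{not} require $f$ to be non-negative---a subtlety that rules out a naive monotone-convergence route. I expect (i) to be the main, though modest, obstacle.
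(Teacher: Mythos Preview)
Your proposal is correct and follows essentially the same route as the paper's proof: both reduce to Proposition~\ref{prop:precise_conditioning_for_positive}, divide numerator and denominator by $\lambda_i$, and pass the limit through the expectation using the finiteness of the state space (the paper packages this last step as a separate lemma, but the content is identical to your ``finite weighted sum'' observation). The one minor difference is that the paper, after obtaining positivity of $\mathbb{E}_{P_\states}[P_{\observs\vert\states}(O_u^i\,\vert\,X_u)]$ for all $i>N$, additionally invokes monotonicity of the measure $P_{\observs\vert\states}$ together with $O_u^j\supseteq O_u^{j+1}$ to conclude positivity for \emph{all} $i\in\nats$, so that every term in the sequence is defined; your tail argument is sufficient for the limit statement, so this extra step is not needed.
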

Note that $\mathbb{E}_P[f(X_v)\,\vert\,Y_u=y_u]$ is clearly dependent on the exact sequence $\{O_u^i\}_{i\in\nats}$. Unfortunately, this is the best we can hope for at the level of generality that we are currently dealing with. 
For brevity, we nevertheless omit from the notation the updated expectation's dependency on this sequence. However, as we will explain below, this should not be problematic for most practical applications.

It is also useful to note that $\phi_u(y_u\vert x_u)$ can often be constructed ``piecewise''. That is, if for every $t\in u$ there is a sequence $\{\lambda_{\,t,i}\}_{i\in\nats}$ in $\realspos$ such that, for all $x_t\in\states_t$,
\begin{equation*}
\phi_t(y_t\vert x_t)\coloneqq \lim_{i\to+\infty} \frac{P_{\observs\vert\states}(O_t^i\vert x_t)}{\lambda_{\,t,i}}
\end{equation*}
exists and is real-valued, then choosing $\{\lambda_i\}_{i\in\nats}$ as $\lambda_i\coloneqq \prod_{t\in u}\lambda_{\,t,i}$ yields $\phi_u(y_u\vert x_u)=\prod_{t\in u}\phi_t(y_t\vert x_t)$. 
%The converse often also holds, in that if $\phi_u(y_u\vert x_u)$ exists there tend to be sequences $\{\lambda_{\,t,i}\}_{t\in\nats}$ so that $\phi_t(y_t\vert x_t)$ exists, and for which $\prod_{t\in u}\phi_t(y_t\vert x_t)=\phi_u(y_u\vert x_u)$. The ``often'' qualifier is a bit hard to make rigorous at this level of generality, but, in any case, this direction is less useful practically. 

Now, to make the above practicable, we can for example assume that if $\observs$ is uncountable, then it is the set $\observs=\reals^d$, for some $d\in\nats$, and that $\Sigma$ is the Borel $\sigma$-algebra on $\reals^d$. 
For each $x\in\states$, we then assume that the measure $P_{\observs\vert\states}(\cdot\,\vert x)$ is induced by some given \emph{probability density function}: a measurable function $\psi(\cdot\,\vert x):\observs\to\realsnonneg$ such that $\int_\observs \psi(y\vert x)\,\mathrm{d}y=1$ and, for every $O\in\Sigma$,
\begin{equation*}
P_{\observs\vert\states}(O\,\vert x) \coloneqq \int_O \psi(y\vert x)\,\mathrm{d}y\,,
\end{equation*}
where the integrals are understood in the Lebesgue sense.

Then choose any $y_u\in\observs_u$, any $t\in u$, any sequence $\{O_t^i\}_{i\in\nats}$ of open balls in $\observs_t$ that are centred on, and shrink to, $y_t$, and fix any $x_u\in\states_u$. If $\psi(\cdot\vert x_t)$ is continuous at $y_t$, it can be shown that
\begin{equation}\label{eq:density_is_limit}
\phi_t(y_t\vert x_t) = \lim_{i\to+\infty} \frac{P_{\observs\vert\states}(O_t^i\vert x_t)}{\lambda(O_t^i)} = \psi(y_t\vert x_t)\,,
\end{equation}
where $\lambda(O_t^i)$ denotes the Lebesgue measure of $O_t^i$. So, we can construct the sequence $\{O_u^i\}_{i\in\nats}$ such that every $O_u^i\coloneqq \prod_{t\in u}O_t^i$, with each $O_t^i$ chosen as above. If we then choose the sequence $\{\lambda_i\}_{i\in\nats}$ as $\lambda_i\coloneqq \prod_{t\in u}\lambda(O_t^i)$ for each $i\in\nats$, we find 
$\phi_u(y_u\vert x_u) = \prod_{t\in u}\phi_t(y_t\vert x_t)=\prod_{t\in u}\psi(y_t\vert x_t)$, 
provided that each $\phi_t(y_t\vert x_t)$ satisfies Equation~\eqref{eq:density_is_limit}. 
It can now be seen that, under these assumptions, the right-hand side of Equation~\eqref{eq:updated_expectation_is_limit} is simply the well-known Bayes' rule for (finite) mixtures of densities. 

In most practical applications, therefore, the function $\phi_u(\cdot\,\vert\,x_u)$ is known explicitly; one may assume, for example, that $Y_t$ follows a Normal distribution with parameters depending on $X_t$, and the functions $\phi_t(\cdot\,\vert\,x_t)$---and by extension, $\phi_u(\cdot\vert x_u)$---then follow directly by identification with $\psi(\cdot\,\vert x_t)$. Furthermore, arguably, most of the density functions that one encounters in practice will be continuous and strictly positive at $y_t$. This guarantees that the limit in Equation~\eqref{eq:density_is_limit} exists, and largely solves the interpretation issue mentioned above: when $\phi_u(y_u\vert X_u)=\prod_{t\in u}\psi(y_t\vert X_t)$ is continuous and positive at $y_u$, $\mathbb{E}_P[f(X_v)\,\vert\,Y_u=y_u]$ exists and is the same for almost\footnote{\label{fnote:limit_required_properties}
It suffices if, for all $t\in u$, there is a sequence of open balls $\{B_t^i\}_{i\in\nats}$ in $\observs$ that shrinks to $y_t$ such that, for all $i\in\nats$, $O_t^i$ has positive Lebesgue measure and is contained in $B_t^i$.} all sequences $\{O_u^i\}_{i\in\nats}$.

Moving on, note that if $\phi_u(y_u\vert X_u)$ exists and satisfies $\lexp[\phi_u(y_u\vert X_u)]>0$, then the updated expectation $\mathbb{E}_P[f(X_v)\,\vert\,Y_u=y_u]$ is well-defined for every $P\in\mathcal{Z}$. Hence, we can then update the imprecise model by updating each of the precise models that it consists of.
\begin{definition}\label{def:reg_ext_densities}
Let $\mathcal{Z}$ be an ICTHMC and consider any $u,v\in\mathcal{U}$, $y_u\in\observs_u$, and $f\in\gambles(\states_v)$. For any $\{O_u^i\}_{i\in\nats}$ in $\Sigma_u$ that shrinks to $y_u$, if for some $\{\lambda_i\}_{i\in\nats}$ in $\realspos$ the quantity $\phi_u(y_u\,\vert\,X_u)$ exists and is real-valued, the updated lower expectation is defined by
\begin{equation*}
\underline{\mathbb{E}}_{\mathcal{Z}}\bigl[f(X_v)\,\vert\,Y_u = y_u\bigr] \coloneqq \inf\{\mathbb{E}_P[f(X_v)\vert Y_u=y_u]\,:\,P\in\mathcal{Z}\}\,,
\end{equation*}
whenever $\lexp[\phi_u(y_u\vert X_u)] >0$, and is left undefined, otherwise.
\end{definition}

Similar to the results in Section~\ref{subsec:pos_prob}, this updated lower expectation satisfies a ``generalised Bayes' rule for mixtures of densities'', in the following sense.

\begin{proposition}\label{prop:GBR_for_densities_lower_zero}
Let $\mathcal{Z}$ be an ICTHMC and consider any $u,v\in\mathcal{U}$, $y_u\in\observs_u$ and $f\in\gambles(\states_v)$. For any $\{O_u^i\}_{i\in\nats}$ in $\Sigma_u$ that shrinks to $y_u$, if for some $\{\lambda_i\}_{i\in\nats}$ in $\realspos$ the quantity $\phi_u(y_u\,\vert\,X_u)$ exists, is real-valued, and satisfies $\lexp[\phi_u(y_u\vert X_u)]>0$, then
\begin{equation}\label{eq:gbr_densities}
\underline{\mathbb{E}}_{\mathcal{Z}}\bigl[f(X_v)\,\vert\,Y_u = y_u\bigr] = \max\left\{\mu\in\reals\,:\, \lexp\bigl[\phi_u(y_u\vert X_u)\bigl(f(X_v) - \mu\bigr)\bigr] \geq 0\right\}\,.
\end{equation}
\end{proposition}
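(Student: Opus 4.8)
The argument is structurally the one behind Proposition~\ref{prop:GBR_regular}, but in the technically easier situation where \emph{no} precise model is discarded by regular extension. The plan is as follows. Since $\lexp[\phi_u(y_u\vert X_u)]>0$, every $P=P_{\observs\vert\states}\otimes P_\states\in\mathcal{Z}$ has $\mathbb{E}_{P_\states}[\phi_u(y_u\vert X_u)]>0$, so by Proposition~\ref{prop:precise_bayes_rule_densities} each updated expectation $\mathbb{E}_P[f(X_v)\,\vert\,Y_u=y_u]$ is well-defined and equals $\mathbb{E}_{P_\states}[f(X_v)\phi_u(y_u\vert X_u)]\big/\mathbb{E}_{P_\states}[\phi_u(y_u\vert X_u)]$. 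Because $\mathbb{E}_{P_\states}$ depends only on $P_\states$, Definition~\ref{def:reg_ext_densities} then gives
\[
\underline{\mathbb{E}}_{\mathcal{Z}}\bigl[f(X_v)\,\vert\,Y_u=y_u\bigr] = \inf\left\{\frac{\mathbb{E}_{P_\states}[f(X_v)\phi_u(y_u\vert X_u)]}{\mathbb{E}_{P_\states}[\phi_u(y_u\vert X_u)]}\,:\,P_\states\in\mathbb{P}_{\rateset,\mathcal{M}}\right\} \eqqcolon \mu^\star ,
\]
and since $f$ is bounded on the finite set $\states_v$, each ratio, and hence $\mu^\star$, lies in $[-b,b]$ with $b\coloneqq\max_{x_v\in\states_v}\abs{f(x_v)}$; in particular $\mu^\star$ is finite.

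Next I would compare $\mu^\star$ with $\nu^\star\coloneqq\max\bigl\{\mu\in\reals\,:\,\lexp[\phi_u(y_u\vert X_u)(f(X_v)-\mu)]\geq 0\bigr\}$ by two inequalities, reading $\phi_u(y_u\vert X_u)(f(X_v)-\mu)$ as a real-valued function of $(X_u,X_v)$, so that $\lexp$ applied to it is the ICTMC lower expectation over finitely many time points, equal to the infimum of $\mathbb{E}_{P_\states}[\,\cdot\,]$ over $P_\states\in\mathbb{P}_{\rateset,\mathcal{M}}$. For $\mu^\star$ in the defining set of $\nu^\star$: for every $P_\states$ the ratio above is $\geq\mu^\star$, so multiplying by the strictly positive $\mathbb{E}_{P_\states}[\phi_u(y_u\vert X_u)]$ gives $\mathbb{E}_{P_\states}[\phi_u(y_u\vert X_u)(f(X_v)-\mu^\star)]\geq 0$; taking the infimum over $P_\states$ yields $\lexp[\phi_u(y_u\vert X_u)(f(X_v)-\mu^\star)]\geq 0$. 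Conversely, if $\mu$ satisfies $\lexp[\phi_u(y_u\vert X_u)(f(X_v)-\mu)]\geq 0$, then $\mathbb{E}_{P_\states}[\phi_u(y_u\vert X_u)(f(X_v)-\mu)]\geq 0$ for every $P_\states$, and dividing by the positive denominator and taking the infimum gives $\mu\leq\mu^\star$. Together these give $\nu^\star=\mu^\star$, which is the claimed identity, once $\nu^\star$ is known to be well-defined.

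It remains to verify that the $\max$ in the statement is attained, i.e.\ that $S\coloneqq\bigl\{\mu\in\reals\,:\,\lexp[\phi_u(y_u\vert X_u)(f(X_v)-\mu)]\geq 0\bigr\}$ is non-empty, closed, and bounded above. Non-emptiness follows from the first inequality above ($\mu^\star\in S$). Closedness follows because the map $\mu\mapsto\lexp[\phi_u(y_u\vert X_u)(f(X_v)-\mu)]$ is continuous: as $\mu\to\mu_0$ the gambles $\phi_u(y_u\vert X_u)(f(X_v)-\mu)$ converge uniformly on the finite joint state space (the difference has supremum norm $\abs{\mu-\mu_0}\max_{x_u}\phi_u(y_u\vert x_u)$, which is finite since $\phi_u$ is real-valued), and lower expectations are continuous with respect to uniform convergence. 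Boundedness above follows by dominating $\lexp[\,\cdot\,]$ by $\mathbb{E}_{P_\states}[\,\cdot\,]$ for a fixed $P_\states$ with $\mathbb{E}_{P_\states}[\phi_u(y_u\vert X_u)]>0$, giving $\lexp[\phi_u(y_u\vert X_u)(f(X_v)-\mu)]\leq \mathbb{E}_{P_\states}[f(X_v)\phi_u(y_u\vert X_u)]-\mu\,\mathbb{E}_{P_\states}[\phi_u(y_u\vert X_u)]\to-\infty$ as $\mu\to+\infty$.

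The main obstacle I anticipate is not a single deep step but the careful bookkeeping around the strict positivity of $\mathbb{E}_{P_\states}[\phi_u(y_u\vert X_u)]$: it is what licenses the per-process ratio expression via Proposition~\ref{prop:precise_bayes_rule_densities}, what makes the division by the denominator legitimate \emph{uniformly} over $P_\states\in\mathbb{P}_{\rateset,\mathcal{M}}$, and what forces $S$ to be bounded above so that $\nu^\star$ exists. One should also be mildly attentive to the identification of $\inf\{\mathbb{E}_P[\,\cdot\,]:P\in\mathcal{Z}\}$ with $\inf\{\mathbb{E}_{P_\states}[\,\cdot\,]:P_\states\in\mathbb{P}_{\rateset,\mathcal{M}}\}$ and to the reading of $\phi_u(y_u\vert X_u)(f(X_v)-\mu)$ as a gamble on the joint state space over the time points of $u$ and $v$. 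Beyond that, the proof is a simplification of that of Proposition~\ref{prop:GBR_regular}, since here regular extension discards no models.
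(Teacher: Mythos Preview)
Your argument is correct. The two inequalities establishing $\mu^\star=\nu^\star$ are valid precisely because the hypothesis $\lexp[\phi_u(y_u\vert X_u)]>0$ forces $\mathbb{E}_{P_\states}[\phi_u(y_u\vert X_u)]>0$ for \emph{every} $P_\states\in\mathbb{P}_{\rateset,\mathcal{M}}$, so the division step and the per-process ratio formula from Proposition~\ref{prop:precise_bayes_rule_densities} are available uniformly; your verification that the $\max$ is attained is also fine.

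The paper, however, takes a different and more general route. It first proves a lemma (Lemma~\ref{lemma:general_regular_extension} and its Corollary~\ref{cor:lower_hidden_is_root_chain}) stating that for any non-negative $g\in\gambles(\states_u)$ with $\uexp[g(X_u)]>0$, one has
\[
\max\bigl\{\mu:\lexp[g(X_u)(f(X_v)-\mu)]\geq 0\bigr\}
=\inf\Bigl\{\tfrac{\mathbb{E}_{P_\states}[f(X_v)g(X_u)]}{\mathbb{E}_{P_\states}[g(X_u)]}:P_\states\in\mathbb{P}_{\rateset,\mathcal{M}},\ \mathbb{E}_{P_\states}[g(X_u)]>0\Bigr\},
\]
i.e.\ the full regular-extension identity under the weaker hypothesis that only the \emph{upper} expectation is positive. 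Proposition~\ref{prop:GBR_for_densities_lower_zero} is then obtained in one line by setting $g\coloneqq\phi_u(y_u\vert\cdot)$ and invoking Proposition~\ref{prop:precise_bayes_rule_densities}. Your approach exploits the stronger hypothesis $\lexp[\phi_u(y_u\vert X_u)]>0$ from the start, so no models are discarded and the argument is shorter and more elementary; what it does not buy is the reusable general lemma, which the paper also needs for Proposition~\ref{prop:GBR_regular} and for the alternative updating scheme $\underline{\mathbb{E}}_\mathcal{Z}^{\mathrm R}$ discussed after Definition~\ref{def:reg_ext_densities}, where only $\uexp>0$ is available.
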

Furthermore, this updated imprecise model can be given an intuitive limit interpretation.
\begin{proposition}\label{prop:GBR_for_densities_is_limit_if_continuous}
Let $\mathcal{Z}$ be an ICTHMC and consider any $u,v\in\mathcal{U}$, $y_u\in\observs_u$ and $f\in\gambles(\states_v)$. For any $\{O_u^i\}_{i\in\nats}$ in $\Sigma_u$ that shrinks to $y_u$, if for some $\{\lambda_i\}_{i\in\nats}$ in $\realspos$ the quantity $\phi_u(y_u\,\vert\,X_u)$ exists, is real-valued, and satisfies $\lexp[\phi_u(y_u\vert X_u)]>0$, then $\underline{\mathbb{E}}_\mathcal{Z}[f(X_v)\vert Y_u=y_u] 
 = \lim_{i\to+\infty}\underline{\mathbb{E}}_\mathcal{Z}[f(X_v)\vert Y_u\in O_u^i]$.
\end{proposition}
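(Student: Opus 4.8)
The plan is to reduce the statement to a uniform-in-$P$ strengthening of Proposition~\ref{prop:precise_bayes_rule_densities}, so that the limit over $i$ can be interchanged with the infimum over $P\in\mathcal{Z}$ that defines each of the two lower expectations. Throughout I would abbreviate $\phi(\cdot)\coloneqq\phi_u(y_u\,\vert\,\cdot)$ and, for each $i\in\nats$, write $g_i$ for the gamble on $\states_u$ with $g_i(x_u)\coloneqq P_{\observs\vert\states}(O_u^i\,\vert\,x_u)/\lambda_i$, and $\norm{\cdot}$ for the supremum norm. By hypothesis $g_i\to\phi$ pointwise on $\states_u$, and since $\states_u$ is finite this convergence is uniform; put $\delta_i\coloneqq\norm{g_i-\phi}\to0$. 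Since every precise expectation $\mathbb{E}_{P_\states}$ is $1$-Lipschitz in $\norm{\cdot}$ and $\inf_{P\in\mathcal{Z}}\mathbb{E}_{P_\states}[\phi(X_u)]=\lexp[\phi(X_u)]>0$, for all $i$ large enough we have $\mathbb{E}_{P_\states}[g_i(X_u)]\geq\lexp[\phi(X_u)]-\delta_i\geq\tfrac12\lexp[\phi(X_u)]>0$ simultaneously for every $P\in\mathcal{Z}$. Hence $P(Y_u\in O_u^i)=\lambda_i\,\mathbb{E}_{P_\states}[g_i(X_u)]>0$ for every $P\in\mathcal{Z}$, so for $i$ large the regular extension of Definition~\ref{def:reg_ext_pos} collapses to $\underline{\mathbb{E}}_{\mathcal{Z}}[f(X_v)\,\vert\,Y_u\in O_u^i]=\inf\{\mathbb{E}_P[f(X_v)\,\vert\,Y_u\in O_u^i]\,:\,P\in\mathcal{Z}\}$; likewise the infimum in Definition~\ref{def:reg_ext_densities} ranges over all of $\mathcal{Z}$, as $\mathbb{E}_{P_\states}[\phi(X_u)]\geq\lexp[\phi(X_u)]>0$ renders each $\mathbb{E}_P[f(X_v)\,\vert\,Y_u=y_u]$ well-defined.

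Next I would show that the per-process updated expectations converge uniformly in $P$. Fixing $P\in\mathcal{Z}$, dividing numerator and denominator in Proposition~\ref{prop:precise_conditioning_for_positive} by $\lambda_i$, and invoking Proposition~\ref{prop:precise_bayes_rule_densities}, gives
\begin{equation*}
\mathbb{E}_P[f(X_v)\,\vert\,Y_u\in O_u^i]=\frac{\mathbb{E}_{P_\states}[f(X_v)g_i(X_u)]}{\mathbb{E}_{P_\states}[g_i(X_u)]}\,, \qquad \mathbb{E}_P[f(X_v)\,\vert\,Y_u=y_u]=\frac{\mathbb{E}_{P_\states}[f(X_v)\phi(X_u)]}{\mathbb{E}_{P_\states}[\phi(X_u)]}\,.
\end{equation*}
Writing the difference of the two ratios as $(N_i-N_*)/D_i+(N_*/D_*)(D_*-D_i)/D_i$ with $N_i,N_*,D_i,D_*$ the respective numerators and denominators, and using $\abs{\mathbb{E}_{P_\states}[h]}\leq\norm{h}$ to get $\abs{N_i-N_*}\leq\norm{f}\delta_i$ and $\abs{D_i-D_*}\leq\delta_i$, together with $\abs{N_*/D_*}=\abs{\mathbb{E}_P[f(X_v)\,\vert\,Y_u=y_u]}\leq\norm{f}$ and the uniform bound $D_i\geq\tfrac12\lexp[\phi(X_u)]$, a routine estimate yields
\begin{equation*}
\abs{\mathbb{E}_P[f(X_v)\,\vert\,Y_u\in O_u^i]-\mathbb{E}_P[f(X_v)\,\vert\,Y_u=y_u]}\leq\frac{4\norm{f}}{\lexp[\phi(X_u)]}\,\delta_i\,,
\end{equation*}
with the constant independent of $P$, so $\sup_{P\in\mathcal{Z}}\abs{\mathbb{E}_P[f(X_v)\vert Y_u\in O_u^i]-\mathbb{E}_P[f(X_v)\vert Y_u=y_u]}\to0$.

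To conclude I would combine this with the elementary inequality $\abs{\inf_P a_P-\inf_P b_P}\leq\sup_P\abs{a_P-b_P}$ (the families here being bounded by $\max_{x_v}\abs{f(x_v)}$) and the two infimum representations from the first step, which directly yields $\abs{\underline{\mathbb{E}}_{\mathcal{Z}}[f(X_v)\,\vert\,Y_u\in O_u^i]-\underline{\mathbb{E}}_{\mathcal{Z}}[f(X_v)\,\vert\,Y_u=y_u]}\to0$, i.e. the claimed identity.

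The main obstacle is precisely this interchange of $\lim_i$ and $\inf_{P\in\mathcal{Z}}$: it would fail without a lower bound on the denominators $\mathbb{E}_{P_\states}[g_i(X_u)]$ that is strictly positive and \emph{uniform in $P$}, and supplying such a bound is exactly what the hypothesis $\lexp[\phi_u(y_u\,\vert\,X_u)]>0$ does, once one observes that pointwise convergence of $g_i$ on the finite set $\states_u$ is automatically uniform. An alternative that avoids the explicit ratios: by positive homogeneity of $\lexp$ one may replace $P_{\observs\vert\states}(O_u^i\,\vert\,X_u)$ by $g_i(X_u)$ inside the generalised Bayes' rule of Proposition~\ref{prop:GBR_regular}, so that both sides of the desired identity become the unique roots of $\mu\mapsto\lexp[g_i(X_u)(f(X_v)-\mu)]$ and $\mu\mapsto\lexp[\phi_u(y_u\,\vert\,X_u)(f(X_v)-\mu)]$ respectively; these maps are strictly decreasing in $\mu$ with slope at most $-\lexp[\phi_u(y_u\,\vert\,X_u)]<0$ and converge uniformly on compact $\mu$-intervals because $\lexp$ is $1$-Lipschitz in supremum norm by coherence, whence the roots converge.
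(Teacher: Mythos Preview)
Your proof is correct, and it takes a genuinely different route from the paper's. The paper argues via the generalised Bayes' rule characterisation: it sets $\mu_i\coloneqq\underline{\mathbb{E}}_\mathcal{Z}[f(X_v)\,\vert\,Y_u\in O_u^i]$, uses positive homogeneity to write each $\mu_i$ as a root of $\mu\mapsto\lexp\bigl[(P_{\observs\vert\states}(O_u^i\,\vert\,X_u)/\lambda_i)(f(X_v)-\mu)\bigr]$, extracts a convergent subsequence by Bolzano--Weierstrass (the $\mu_i$ live in $[\min f,\max f]$), passes to the limit inside $\lexp$ by its norm-continuity, and then invokes the \emph{uniqueness} of the root of $\mu\mapsto\lexp[\phi_u(y_u\,\vert\,X_u)(f(X_v)-\mu)]$ (Property~\ref{GBR:low_pos}) to force every subsequential limit to coincide with $\underline{\mathbb{E}}_\mathcal{Z}[f(X_v)\,\vert\,Y_u=y_u]$; a standard subsequence-of-subsequence step finishes. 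Your alternative sketch at the end is in the same spirit but still avoids the compactness step by appealing to uniform convergence of the $G$-functions and a uniform negative slope.

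Your main argument is instead a direct, quantitative interchange of $\inf_{P\in\mathcal{Z}}$ and $\lim_i$ at the level of the ratio formulas from Propositions~\ref{prop:precise_conditioning_for_positive} and~\ref{prop:precise_bayes_rule_densities}. The crucial observation you exploit---that pointwise convergence of $g_i\to\phi$ on the \emph{finite} set $\states_u$ is automatically uniform---together with $\lexp[\phi(X_u)]>0$ gives a denominator bound that is uniform in $P$, and the rest is an elementary ratio estimate plus $\lvert\inf a-\inf b\rvert\leq\sup\lvert a-b\rvert$. This buys you an explicit rate of convergence (of order $\delta_i/\lexp[\phi(X_u)]$), which the paper's compactness argument does not provide, and it sidesteps both the GBR machinery and Bolzano--Weierstrass. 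The paper's approach, on the other hand, makes transparent \emph{why} the hypothesis $\lexp[\phi_u(y_u\,\vert\,X_u)]>0$ cannot be weakened to $\uexp[\phi_u(y_u\,\vert\,X_u)]>0$: without it the limiting root need not be unique, which connects directly to the counterexample in Proposition~\ref{prop:counter_example_regular}.
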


Now, recall that the requirement $\lexp[\phi_u(y_u\vert X_u)]>0$ for updating the imprecise model is a sufficient condition to guarantee that \emph{all} the precise updated models are well-defined. However, one may wonder whether it is also possible to update the imprecise model under weaker conditions. Indeed, one obvious idea would be to define the updated model more generally as
\begin{equation*}
\underline{\mathbb{E}}_\mathcal{Z}^\mathrm{R}[f(X_v)\,\vert\,Y_u=y_u] \coloneqq \inf\left\{ \mathbb{E}_P[f(X_v)\,\vert\,Y_u=y_u]\,:\, P\in\mathcal{Z},\,\mathbb{E}_{P_\states}[\phi_u(y_u\vert X_u)]>0 \right\}\,,
\end{equation*}
whenever $\uexp[\phi_u(y_u\vert X_u)]>0$; this guarantees that \emph{some} of the precise updated models are well-defined. This updated lower expectation satisfies the same generalised Bayes' rule as above, i.e. the right-hand side of Equation~\eqref{eq:gbr_densities} is equal to $\underline{\mathbb{E}}_\mathcal{Z}^\mathrm{R}[f(X_v)\,\vert\,Y_u=y_u]$ whenever $\uexp[\phi_u(y_u\vert X_u)]>0$. However, the limit interpretation then fails to hold, in the sense that it is possible to construct an example where $\underline{\mathbb{E}}_\mathcal{Z}^\mathrm{R}[f(X_v)\,\vert\,Y_u=y_u] \neq \lim_{i\to+\infty} \underline{\mathbb{E}}_\mathcal{Z}[f(X_v)\,\vert\,Y_u\in O_u^i]$, with $\uexp[\phi_u(y_u\vert X_u)]>0$ but $\lexp[\phi_u(y_u\vert X_u)]=0$. We feel that this makes this more general updating scheme somewhat troublesome from an interpretation (and hence philosophical) point of view.

On the other hand, we recall that the existence of $\phi_u(y_u\vert X_u)$ and the positivity of $\mathbb{E}_{P_\states}[\phi_u(y_u\vert X_u)]$ are necessary and sufficient conditions for the limit in Equation~\eqref{eq:def:precise_updated_limit} to exist and be computable using Equation~\eqref{eq:updated_expectation_is_limit}. However, these conditions are sufficient but non-necessary for that limit to simply exist. Therefore, a different way to generalise the imprecise updating method would be
\begin{equation*}
\underline{\mathbb{E}}_\mathcal{Z}^\mathrm{L}[f(X_v)\,\vert\,Y_u=y_u] \coloneqq \inf\left\{ \mathbb{E}_P[f(X_v)\,\vert\,Y_u=y_u]\,:\, P\in\mathcal{Z},~\text{$\mathbb{E}_P[f(X_v)\,\vert\,Y_u=y_u]$ exists} \right\}\,,
\end{equation*}
whenever $\{P\in\mathcal{Z}\,:\,~\text{$\mathbb{E}_P[f(X_v)\,\vert\,Y_u=y_u]$ exists}\}\neq\emptyset$. We conjecture that this updated model \emph{does} satisfy the limit interpretation, but on the other hand, it is possible to show that this, in turn, no longer satisfies the above generalised Bayes' rule. That makes this updating scheme somewhat troublesome from a practical point of view because, as we discuss below, the expression in Equation~\eqref{eq:gbr_densities} is crucial for our method of efficient computation of the updated lower expectation.

\section{Inference Algorithms}\label{sec:inference_algos}

In the previous section, we have seen that we can use the generalised Bayes' rule for updating our ICTHMC with some given observations. From a computational point of view, this is particularly useful because, rather than having to solve the non-linear optimisation problems 
in Definitions~\ref{def:reg_ext_pos} or~\ref{def:reg_ext_densities} directly, 
we can focus on evaluating the function $\lexp\bigl[P_{\observs\vert\states}(O_u\vert X_u)\bigl(f(X_v) - \mu\bigr)\bigr]$,
or its density-analogue, for some fixed value of $\mu$. Finding the updated lower expectation is then a matter of finding the maximum value of $\mu$ for which this quantity is non-negative. As we will discuss in Section~\ref{sec:gbr}, this is a relatively straightforward problem to solve numerically.

Therefore, in order for this approach to be computationally tractable, we require efficient algorithms that can evaluate this quantity for a given value of $\mu$. In Section~\ref{sec:funcs_single_time}, we provide such an algorithm for the important case where the function $f$ depends on a single time-point.

We first generalise the problem so that these results are applicable both for observations of the form $(Y_u\in O_u)$, and for point-observations $(Y_u=y_u)$ in an uncountable outcome space. Recall that 
\begin{equation*}
P_{\observs\vert\states}(O_u\vert X_u) = \prod_{t\in u}P_{\observs\vert\states}(O_{t}\vert X_{t})\,\quad\quad\text{and}\quad\quad \phi_u(y_u\vert X_u) = \prod_{t\in u}\phi_{t}(y_{t}\vert X_{t})\,.\vspace{-5pt}
\end{equation*}
In both cases, we can rewrite this expression as $\prod_{t\in u}g_{t}(X_{t})$, where, for all $t\in u$, $g_{t}\in\gambles(\states_{t})$ and $g_{t}\geq 0$. The function of interest is then
$\lexp\left[ \bigl(\prod_{t\in u}g_{t}(X_{t})\bigr)\bigl(f(X_v) - \mu\bigr) \right]$ and the sign conditions in Propositions~\ref{prop:GBR_regular} and~\ref{prop:GBR_for_densities_lower_zero} reduce to $\uexp[\prod_{t\in u} g_{t}(X_{t})]>0$ and $\lexp[\prod_{t\in u} g_{t}(X_{t})]>0$, respectively.

\subsection{Solving the Generalised Bayes' Rule}\label{sec:gbr}

Finding the maximum value of $\mu$ for which the function of interest in the generalised Bayes' rule is non-negative, is relatively straightforward numerically. This is because this function, parameterised in $\mu$, is very well-behaved. The proposition below explicitly states some of its properties. These are essentially well-known, and can also be found in other work; see, e.g.,~\cite[Section 2.7.3]{de2015credal}. The statement below is therefore intended to briefly recall these properties, and is stated in a general form where we can also use it when working with densities.

\begin{proposition}\label{prop:GBR_properties}
Let $\mathbb{P}_{\rateset,\mathcal{M}}$ be an ICTMC and consider any $u,v\in\mathcal{U}$, any $f\in\gambles(\states_v)$ and, for all $t\in u$, any $g_{t}\in\gambles(\states_{t})$ such that $g_{t}\geq 0$. 
Consider the function $G: \reals\to\reals$ that is given, for all $\mu\in\reals$, by $G(\mu)\coloneqq \lexp\left[\left(\prod_{t\in u} g_{t}(X_{t})\right)\bigl(f(X_v) - \mu\bigr)\right]$.
Then the following properties hold: \vspace{-2pt}
\begin{enumerate}[label=G\arabic*:,ref=G\arabic*]
\item $G$ is continuous, non-increasing, concave, and has a root, i.e. $\exists \mu\in\reals:G(\mu)=0$. \label{GBR:always} \vspace{-3pt}
\item If\/ $\lexp\bigl[\prod_{t\in u} g_{t}(X_{t})\bigr] >0$, then $G$ is (strictly) decreasing, and has a unique root. \label{GBR:low_pos} \vspace{-3pt}
\item If\/ $\lexp\bigl[\prod_{t\in u} g_{t}(X_{t})\bigr]=0$ but $\uexp\bigl[\prod_{t\in u} g_{t}(X_{t})\bigr] >0$, then $G$ has a maximum root $\mu_*$, satisfies $G(\mu)=0$ for all $\mu\leq \mu_*$, and is (strictly) decreasing for $\mu>\mu_*$. \label{GBR:up_pos} \vspace{-3pt}
\item If\/ $\uexp\bigl[\prod_{t\in u} g_{t}(X_{t})\bigr]=0$, then $G$ is identically zero, i.e. $\forall \mu\in\reals: G(\mu)=0$. \label{GBR:none_pos}
\end{enumerate}
\end{proposition}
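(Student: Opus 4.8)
The plan is to unfold the lower expectation as an infimum over the precise stochastic processes in the ICTMC, which exhibits $G$ as a pointwise infimum of affine functions of $\mu$; all four properties then follow from two elementary inequalities together with standard facts about concave functions. Write $h\coloneqq\prod_{t\in u}g_{t}$, so $h$ is a non-negative function on $\states_u$, and set $K\coloneqq\max_{x_v\in\states_v}\abs{f(x_v)}$. Since $\states$ is finite, for each $P\in\mathbb{P}_{\rateset,\mathcal{M}}$ the functional $\mathbb{E}_P$ is a finite sum and hence linear, so that $\mathbb{E}_P[h(X_u)(f(X_v)-\mu)] = a_P-\mu\,b_P$, where $a_P\coloneqq\mathbb{E}_P[h(X_u)f(X_v)]$ and $b_P\coloneqq\mathbb{E}_P[h(X_u)]$. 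By definition of $\lexp$, this gives $G(\mu)=\inf\{a_P-\mu\,b_P:P\in\mathbb{P}_{\rateset,\mathcal{M}}\}$. The two facts I would lean on are: (i) $b_P\geq0$ for all $P$, because $h\geq0$; and (ii) $\abs{a_P}\leq K\,b_P$ for all $P$, because $\abs{h(X_u)f(X_v)}\leq K\,h(X_u)$ pointwise. I would also note that $\lexp[h(X_u)]=\inf_P b_P$ and $\uexp[h(X_u)]=\sup_P b_P$, whence $0\leq\lexp[h(X_u)]\leq\uexp[h(X_u)]$, so that the hypotheses of properties G2, G3 and G4 are mutually exclusive and jointly exhaustive.

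Property G1 would then be handled as follows. As a pointwise infimum of the affine maps $\mu\mapsto a_P-\mu\,b_P$, each of which is non-increasing by (i), $G$ is concave and non-increasing; since $h$ and $f$ are bounded, $G$ is real-valued, and a finite concave function on $\reals$ is continuous. For the existence of a root, evaluating (ii) at $\mu=-K$ gives $a_P+K\,b_P\geq0$ for every $P$, hence $G(-K)\geq0$; and either there is some $P_0$ with $b_{P_0}>0$, in which case $G(\mu)\leq a_{P_0}-\mu\,b_{P_0}\to-\infty$ and the intermediate value theorem supplies a root, or $b_P=0$ for all $P$, in which case $G\equiv0$ by the argument for property G4 below.

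For property G4, $\uexp[h(X_u)]=0$ combined with (i) forces $b_P=0$ for every $P$, and then (ii) forces $a_P=0$, so $G\equiv0$. For property G2, $\lexp[h(X_u)]>0$ means $b_P\geq c\coloneqq\lexp[h(X_u)]>0$ for all $P$; then for $\mu_1<\mu_2$ one has $a_P-\mu_2 b_P\leq(a_P-\mu_1 b_P)-c(\mu_2-\mu_1)$ for every $P$, and taking infima shows $G$ is strictly decreasing, which together with $G(-K)\geq0$, $G(\mu)\to-\infty$ and continuity yields a unique root. For property G3, (ii) gives $a_P-\mu\,b_P\leq(K+\abs{\mu})\,b_P$, and taking the infimum over $P$ while using $\inf_P b_P=\lexp[h(X_u)]=0$ shows $G(\mu)\leq0$ for every $\mu$; with $G(-K)\geq0$ this forces $G(-K)=0$, and since $\uexp[h(X_u)]>0$ makes $G(\mu)\to-\infty$ as before, the set $\{\mu:G(\mu)=0\}$ is non-empty and bounded above. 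Being the set on which the concave $G$ attains its maximum value $0$, this set is an interval; since $G$ is non-increasing and vanishes on all of $(-\infty,-K]$, it is of the form $(-\infty,\mu_*]$, and a final concavity argument---if $G$ took equal values at two points to the right of $\mu_*$ it would be constant on a left half-line, contradicting $G(\mu)\to-\infty$---shows that $G$ is strictly decreasing on $[\mu_*,+\infty)$.

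The only genuinely delicate part is the concavity bookkeeping in property G3, i.e.\ upgrading ``$G\leq0$ everywhere, $G=0$ on an initial half-line, and $G\to-\infty$'' to ``$G=0$ precisely on $(-\infty,\mu_*]$ and strictly decreasing thereafter''. Everything else is an immediate consequence of (i), (ii), and the fact that a pointwise infimum of affine functions is concave.
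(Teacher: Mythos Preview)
Your proof is correct and considerably more streamlined than the paper's. The paper proves each property essentially from scratch: continuity via a separate lemma on limits of lower expectations (Lemma~\ref{lemma:limit_lexp_is_lexp_limit}), concavity and strict monotonicity by explicit $\epsilon$-arguments, and for G3 it imports the existence of a maximum root from a separate result (Lemma~\ref{lemma:general_regular_extension}) and then gives a rather lengthy direct argument for strict decrease past $\mu_*$. Your structural observation---that $G$ is a pointwise infimum of the affine maps $\mu\mapsto a_P-\mu\,b_P$, together with the single inequality $\lvert a_P\rvert\leq K\,b_P$---collapses most of this work: concavity and continuity become one-line facts, the sign of $G$ at $\mu=\pm K$ is immediate, and both the flatness of $G$ on $(-\infty,\mu_*]$ in G3 and the G4 case follow from that same inequality. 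What the paper's approach buys is that it never invokes the general fact that a finite concave function on $\reals$ is continuous, staying entirely within elementary $\epsilon$-analysis; what yours buys is brevity and a clearer picture of why the four cases split as they do.

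One small slip to fix: in your last sentence for G3, the contradiction obtained from ``$G$ constant on a left half-line'' is not with $G(\mu)\to-\infty$ (that concerns $\mu\to+\infty$ and says nothing about the left), but with $G\equiv 0$ on $(-\infty,\mu_*]$, which you have already established. If $G$ took the same value at two points $\mu_1<\mu_2$ with $\mu_*<\mu_1$, your concavity argument shows $G$ is constant (and negative) on $(-\infty,\mu_2]$, contradicting $G(\mu_*)=0$.
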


Note that the function $G$ in Proposition~\ref{prop:GBR_properties} can behave in three essentially different ways. These correspond to the cases where the observed event has strictly positive probability(/density) for \emph{all} processes in the set; to where it only has positive probability(/density) for \emph{some} processes; and to where it has \emph{zero} probability(/density) for \emph{all} processes.
In the first two cases---which are the important ones to apply the generalised Bayes' rule---the function is ``well-behaved'' enough to make finding its maximum root a fairly simple task. For instance, a standard bisection/bracketing algorithm can be applied here, known in this context as Lavine's algorithm~\citep{cozman1997alternatives}.

We sketch this method below. First, note that due to Propositions~\ref{prop:GBR_regular} and~\ref{prop:GBR_for_densities_lower_zero}, the maximum root will always be found in the interval $[\min f, \max f]$. The properties above therefore provide us with a way to check the sign conditions for updating. That is, for any $\mu>\max f$, we see that $G(\mu)<0$ if and only if $\uexp[\prod_{t\in u} g_{t}(X_{t})]>0$;  
similarly, for any $\mu < \min f$, we see that $G(\mu)>0$ if and only if $\lexp[\prod_{t\in u} g_{t}(X_{t})]>0$. Evaluating $G$ at such values of $\mu$ is therefore sufficient to check the sign conditions in Propositions~\ref{prop:GBR_regular} and~\ref{prop:GBR_for_densities_lower_zero}.

The algorithm now starts by setting $\mu_-\coloneqq \min f$, and $\mu_+\coloneqq \max f$; if $G(\mu_+)=0$, we know that $\mu_+$ is the quantity of interest. Otherwise, proceed iteratively in the following way. Compute the half-way point $\mu\coloneqq \nicefrac{1}{2}(\mu_+-\mu_-)$; then, if $G(\mu)\geq 0$ set $\mu_-\coloneqq \mu$, otherwise set $\mu_+\coloneqq\mu$; then repeat. Clearly, the interval $[\mu_-,\mu_+]$ still contains the maximum root after each step. The procedure can be terminated whenever $(\mu_+-\mu_-)<\epsilon$, for some desired numerical precision $\epsilon>0$. Since the width of the interval is halved at each iteration, the runtime of this procedure is $O\bigl(\log\{(\max f - \min f)\epsilon^{-1}\}\bigr)$.
Methods for improving the numerical stability of this procedure can be found in \cite[Section 2.7.3]{de2015credal}. 

\subsection{Functions on a Single Time Point}\label{sec:funcs_single_time}

Having discussed an efficient method to find the maximum root of the function $G(\mu)$ in Section~\ref{sec:gbr}, it now remains to provide an efficient method to numerically \emph{evaluate} this function for a given value of $\mu$. Clearly, any such method will depend on the choice of $f$.

We focus on a particularly useful special case, which can be used to compute the updated lower expectation of a function $f\in\gambles(\states_s)$ on a single time point $s$, given observations at time points $u$. If $s\notin u$, then it will be notationally convenient to define $g_s\coloneqq f - \mu$, and to let $u'\coloneqq u\cup \{s\}$. We can then simply focus on computing
\begin{equation*}
\lexp\left[ \left(\prod_{t\in u}g_{t}(X_{t})\right)\bigl(f(X_s) - \mu\bigr) \right] = \lexp\left[ \prod_{t\in u'}g_{t}(X_{t})\right]\,.
\end{equation*}
On the other hand, if $s = t$ for some $t\in u$, we let $u'\coloneqq u$ and replace $g_{t}$ by $(f-\mu)g_{t}$. Clearly, the above equality then also holds; the point is simply to establish a uniform indexing notation over all time-points and functions. The right hand side of the above equality can now be computed using the following dynamic programming technique. 

For all $t\in u'$, we define auxiliary functions $g_{t}^+,g_{t}^-\in\gambles(\states_{t})$, as follows. Writing $u'=t_0,\ldots,t_{n}$, let $g_{t_{n}}^+\coloneqq g_{t_{n}}^-\coloneqq g_{t_{n}}$. Next,  for all $i\in\{0,\dots,n-1\}$ and all $x_{t_i}\in\states_{t_i}$, let
\begin{equation*}
g_{t_i}^+(x_{t_i}) \coloneqq \left\{\begin{array}{ll}
g_{t_i}(x_{t_i})\underline{\mathbb{E}}_\rateset[g_{t_{i+1}}^+(X_{t_{i+1}})\,\vert\, X_{t_i}=x_{t_i}] & \text{if $g_{t_i}(x_{t_i})\geq 0$,} \\
g_{t_i}(x_{t_i})\overline{\mathbb{E}}_\rateset[g_{t_{i+1}}^-(X_{t_{i+1}})\,\vert\, X_{t_i}=x_{t_i}] & \text{if $g_{t_i}(x_{t_i})<0$}
\end{array}\right.\vspace{-5pt}
\end{equation*}
and
\begin{equation*}
g_{t_i}^-(x_{t_i}) \coloneqq \left\{\begin{array}{ll}
g_{t_i}(x_{t_i})\overline{\mathbb{E}}_\rateset[g_{t_{i+1}}^-(X_{t_{i+1}})\,\vert\, X_{t_i}=x_{t_i}] & \text{if $g_{t_i}(x_{t_i})\geq 0$,} \\
g_{t_i}(x_{t_i})\underline{\mathbb{E}}_\rateset[g_{t_{i+1}}^+(X_{t_{i+1}})\,\vert\, X_{t_i}=x_{t_i}] & \text{if $g_{t_i}(x_{t_i})<0$.}
\end{array}\right.\vspace{5pt}
\end{equation*}
Clearly, backward recursion allows us to compute all these functions in a time-complexity order that is linear in the number of time points in $u'$. Practically, at each step, computing the quantities $\underline{\mathbb{E}}_\rateset[g_{t_{i+1}}^+(X_{t_{i+1}})\,\vert\, X_{t_i}=x_{t_i}]$ and $\overline{\mathbb{E}}_\rateset[g_{t_{i+1}}^-(X_{t_{i+1}})\,\vert\, X_{t_i}=x_{t_i}]$ can be done using Equation~\eqref{eq:lower_exp_in_steps} and the method described in Section~\ref{subsec:ICTMC_computations}. Due to the results in~\citep{krak2016ictmc}, each of these quantities is computable in polynomial time. So, the total complexity of computing all these functions is clearly also polynomial. We now have the following result.
\begin{proposition}\label{prop:computing_product_funcs}
For all $t\in u'$, let $g_{t}$, $g_{t}^+$ and $g_{t}^-$ be as defined above. Then the function of interest is given by
$\lexp\left[\prod_{t\in u'}g_{t}(X_{t})\right] = \lexp\left[g_{t_0}^+(X_{t_0})\right]$. Also,
$\uexp\left[\prod_{t\in u'}g_{t}(X_{t})\right]=\uexp\left[g_{t_0}^-(X_{t_0})\right]$.
\end{proposition}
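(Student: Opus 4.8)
The plan is to establish both identities simultaneously, by a backward induction over the ordered time points $u'=t_0,\dots,t_n$, proving a conditional strengthening along the way. The induction hypothesis I would carry is: for every $k\in\{0,\dots,n\}$ and every $x_{t_k}\in\states_{t_k}$,
\begin{align*}
\lexp\Bigl[\prod_{i=k}^{n}g_{t_i}(X_{t_i})\,\Bigm|\,X_{t_k}=x_{t_k}\Bigr]&=g_{t_k}^{+}(x_{t_k}),\\
\uexp\Bigl[\prod_{i=k}^{n}g_{t_i}(X_{t_i})\,\Bigm|\,X_{t_k}=x_{t_k}\Bigr]&=g_{t_k}^{-}(x_{t_k}).
\end{align*}
These two statements must be proved together, since the recursion defining $g_{t_i}^{+}$ on the region $\{g_{t_i}<0\}$ refers to $g_{t_{i+1}}^{-}$, and conversely. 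The base case $k=n$ is immediate from $g_{t_n}^{+}=g_{t_n}^{-}=g_{t_n}$ and the observation that, after conditioning on $X_{t_n}=x_{t_n}$, the factor $g_{t_n}(X_{t_n})$ is just the constant $g_{t_n}(x_{t_n})$.

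For the inductive step, fix $x_{t_k}$ and split off the now-constant factor $g_{t_k}(x_{t_k})$ using positive/negative homogeneity of lower and upper expectations: if $g_{t_k}(x_{t_k})\geq 0$ it leaves $\lexp[\cdot\mid X_{t_k}=x_{t_k}]$ (resp.\ $\uexp[\cdot\mid X_{t_k}=x_{t_k}]$) unchanged, whereas if $g_{t_k}(x_{t_k})<0$ it comes out of $\lexp$ turning it into $\uexp$, and symmetrically. This reduces the problem to evaluating $\lexp[\prod_{i=k+1}^{n}g_{t_i}(X_{t_i})\mid X_{t_k}=x_{t_k}]$ and its upper analogue. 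To these I would apply the law of iterated lower (resp.\ upper) expectation for ICTMC's, together with the imprecise Markov property, both established in~\citet{krak2016ictmc}, introducing an intermediate conditioning on $X_{t_{k+1}}$: since the remaining product depends only on time points $\geq t_{k+1}$, conditioning on $(X_{t_k},X_{t_{k+1}})$ collapses to conditioning on $X_{t_{k+1}}$ alone. By the induction hypothesis the inner conditional lower (resp.\ upper) expectation equals $g_{t_{k+1}}^{+}(X_{t_{k+1}})$ (resp.\ $g_{t_{k+1}}^{-}(X_{t_{k+1}})$), a function of the single time point $t_{k+1}$, so the outer expectation becomes $\underline{\mathbb{E}}_\rateset[g_{t_{k+1}}^{+}(X_{t_{k+1}})\mid X_{t_k}=x_{t_k}]$ (resp.\ $\overline{\mathbb{E}}_\rateset[g_{t_{k+1}}^{-}(X_{t_{k+1}})\mid X_{t_k}=x_{t_k}]$), by the identity $\lexp[h(X_s)\mid X_t]=\underline{\mathbb{E}}_\rateset[h(X_s)\mid X_t]$. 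Matching the four resulting sign/operator combinations against the definitions of $g_{t_k}^{+}$ and $g_{t_k}^{-}$ closes the step.

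Finally, specializing to $k=0$ gives $\lexp[\prod_{t\in u'}g_{t}(X_{t})\mid X_{t_0}]=g_{t_0}^{+}(X_{t_0})$ and $\uexp[\prod_{t\in u'}g_{t}(X_{t})\mid X_{t_0}]=g_{t_0}^{-}(X_{t_0})$. One further application of the law of iterated lower (resp.\ upper) expectation, now with the outer expectation unconditional, yields $\lexp[\prod_{t\in u'}g_{t}(X_{t})]=\lexp[g_{t_0}^{+}(X_{t_0})]$ and $\uexp[\prod_{t\in u'}g_{t}(X_{t})]=\uexp[g_{t_0}^{-}(X_{t_0})]$, which is the claim. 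I expect the main obstacle to be the repeated, clean use of the iterated-expectation and general imprecise-Markov machinery for functions on several time points: this is the step that genuinely relies on the structural assumptions on $\rateset$ (bounded, convex, separately specified rows), and I would invoke the corresponding results of~\citet{krak2016ictmc} rather than reprove them. The sign bookkeeping in the homogeneity step is routine, but it must be tracked carefully so that the $g^{+}$- and $g^{-}$-recursions remain correctly interleaved.
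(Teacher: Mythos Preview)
Your proposal is correct and mirrors the paper's approach: the paper isolates your induction hypothesis as a separate lemma (showing $g_{t_i}^{+}=\underline{\mathbb{E}}_\rateset[\prod_{j=i}^{n}g_{t_j}(X_{t_j})\mid X_{t_i}]$ and the analogous identity for $g_{t_i}^{-}$ by backward induction, using non-negative homogeneity for the sign split and the law of iterated lower expectation from~\citet{krak2016ictmc}), and then deduces the proposition by one further application of iterated lower/upper expectation at $t_0$. The only cosmetic difference is the order of operations in the inductive step---the paper applies the induction hypothesis first and then pulls out the constant, whereas you pull out the constant first---but the ingredients and structure are the same.
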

So, in order to evaluate the function of interest, it remains to compute $\lexp\left[g_{t_0}^+(X_{t_0})\right]$. Since $g_{t_0}^+$ is a function on a single time point $t_0$, this can again be done in polynomial time, using Equation~\eqref{eq:unconditional_lower_exp}.

\section{Conclusions and Future Work}\label{sec:conclusions}

We considered the problem of performing inference with \emph{imprecise continuous-time hidden Markov chains}; an extension of \emph{imprecise continuous-time Markov chains} obtained by augmenting them with random \emph{output} variables, which may be either discrete or continuous.
Our main result is an efficient, polynomial runtime, algorithm to compute lower expectations of functions that depend on the state-space at any given time-point, given a collection of observations of the output variables. %This algorithm can be used both when the outputs are discrete and when they are continuous.

In future work, we intend to further generalise this model, by also allowing for imprecise output variables. Furthermore, we also aim to develop algorithms for other inference problems,
such as the problem of computing updated lower expectations of functions $f\in\gambles(\states_v)$ that depend on more than one time-point. Similarly, we aim to investigate predictive output inferences, i.e., the lower probability/density of observations, which has uses in classification problems. Another such problem is that of estimating state-sequences given observed output-sequences---as was previously done for (discrete-time) iHMM's~\citep{DeBock:2014ts}. %We believe that these previous results should translate fairly naturally to the current setting.
\vspace{-9pt}

\appendix

\acks{The work in this paper was partially supported by the Research Foundation - Flanders (FWO) and the H2020-MSCA-ITN-2016 UTOPIAE, grant agreement 722734. The authors would also like to thank three anonymous referees for their helpful comments and suggestions.}
\vspace{-4pt}
%\bibliography{general}

\newpage

\section{Extended Preliminaries}

We here provide some additional notation and previous results that we use throughout the proofs of our results.

\subsection{Additional Notation}

For any sequence of time points $u=\{t_0,\ldots,t_n\}$, we write for any $t\in\realsnonneg$ that $u<t$ whenever $t_i<t$ for all $i\in\{0,\ldots,n\}$. Similarly, for any $u,v\in\mathcal{U}$, we write $u< v$ when all time-points in $u$ are strictly less than all time-points in $v$.

Recall that for any $u\in\mathcal{U}$, we denote with $\gambles(\states_u)$ the set of all real-valued functions on $\states_u$. We endow these function spaces with the $L^\infty$-norm, i.e. the norm $\norm{f}$ of any $f\in\gambles(\states_u)$ is defined to be $\norm{f}\coloneqq\norm{f}_\infty=\max\{\abs{f(x_u)}\,:\,x_u\in\states_u\}$. Limits of functions are to be interpreted under this norm.

Furthermore, we sometimes use the shorthand notation $\{a_i\}_{i\in\nats}\to c$ for convergent sequences of quantities, which should be read as $\lim_{i\to+\infty}a_i=c$. If this limit is approached from above or below, we write $\{a_i\}_{i\in\nats}\to c^+$ or $\{a_i\}_{i\in\nats}\to c^-$, respectively.

\subsection{A Useful Property of ICTMC's}

The below states a very useful property of the lower expectation corresponding to ICTMC's, that we will require in some of our proofs.

\begin{lemma}[Iterated Lower Expectation]\cite[Theorem 6.5]{krak2016ictmc}\label{lemma:iterated_lower}
Let $\rateset$ be a non-empty, bounded and convex set of rate matrices, and let $\mathcal{M}$ be a non-empty set of probability mass functions on $\states_0$. Let $\mathbb{P}_{\rateset,\mathcal{M}}$ denote the corresponding ICTMC.
Let $u\subset\realsnonneg$ be a finite (possibly empty) sequence of time-points, and consider any $v,w\in\mathcal{U}$ such that $u<v<w$. Choose any $f\in\gambles(\states_{u\cup v\cup w})$. Then,
\begin{equation*}
\lexp[f(X_u,X_v,X_w)\,\vert\,X_u] = \lexp\bigl[\lexp[f(X_u,X_v,X_w)\,\vert\,X_u,X_v]\,\vert\,X_u\bigr]\,.
\end{equation*}
\end{lemma}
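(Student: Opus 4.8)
The plan is to prove the two inequalities between $\lexp[f(X_u,X_v,X_w)\,\vert\,X_u]$ and $\lexp\bigl[\lexp[f(X_u,X_v,X_w)\,\vert\,X_u,X_v]\,\vert\,X_u\bigr]$ separately: one of them follows from the classical law of iterated expectations applied to each precise process in the set, while the other is where the structural assumptions on the ICTMC (boundedness, convexity, separately specified rows) must be used. For the direction ``$\geq$'', fix any $P\in\mathbb{P}_{\rateset,\mathcal{M}}$. The ordinary tower rule gives $\mathbb{E}_P[f(X_u,X_v,X_w)\,\vert\,X_u]=\mathbb{E}_P\bigl[\mathbb{E}_P[f(X_u,X_v,X_w)\,\vert\,X_u,X_v]\,\vert\,X_u\bigr]$. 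Since $\mathbb{E}_P[f\,\vert\,X_u,X_v]\geq\lexp[f\,\vert\,X_u,X_v]$ pointwise on $\states_{u\cup v}$ and $\mathbb{E}_P[\,\cdot\,\vert\,X_u]$ is monotone, this is at least $\mathbb{E}_P\bigl[\lexp[f\,\vert\,X_u,X_v]\,\vert\,X_u\bigr]$, which in turn is at least $\lexp\bigl[\lexp[f\,\vert\,X_u,X_v]\,\vert\,X_u\bigr]$ by definition of the latter as an infimum over $\mathbb{P}_{\rateset,\mathcal{M}}$. Taking the infimum over $P$ on the left yields ``$\geq$''.

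For the direction ``$\leq$'' --- the substantive one --- I would fix $\varepsilon>0$ and a conditioning value $x_u\in\states_u$, let $s$ be the largest time-point in $v$, and write $h\in\gambles(\states_{u\cup v})$ for the function $h(x_u,x_v)\coloneqq\lexp[f(X_u,X_v,X_w)\,\vert\,X_u=x_u,X_v=x_v]$. Choose an ``outer'' process $P^{\mathrm{o}}\in\mathbb{P}_{\rateset,\mathcal{M}}$ with $\mathbb{E}_{P^{\mathrm{o}}}[h\,\vert\,X_u=x_u]\leq\lexp[h\,\vert\,X_u=x_u]+\varepsilon$ and, for each of the finitely many histories $(x_u,x_v)$, an ``inner'' process $P^{x_v}\in\mathbb{P}_{\rateset,\mathcal{M}}$ with $\mathbb{E}_{P^{x_v}}[f\,\vert\,X_u=x_u,X_v=x_v]\leq h(x_u,x_v)+\varepsilon$. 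Now splice these into a single $P$: use the initial distribution and rate matrices of $P^{\mathrm{o}}$ at all time-points strictly before $s$, and, conditional on the realised history $(X_u,X_v)=(x_u,x_v)$, the rate matrices of $P^{x_v}$ from $s$ onward. Because $\rateset$ is bounded, convex and has separately specified rows, $\mathbb{P}_{\rateset,\mathcal{M}}$ is closed under exactly this kind of history-dependent recombination at a time-point --- this is the content of the relevant construction lemma of~\citep{krak2016ictmc} --- so $P\in\mathbb{P}_{\rateset,\mathcal{M}}$. By construction $\mathbb{E}_P[f\,\vert\,X_u=x_u,X_v=x_v]=\mathbb{E}_{P^{x_v}}[f\,\vert\,X_u=x_u,X_v=x_v]\leq h(x_u,x_v)+\varepsilon$, and, since $P$ shares with $P^{\mathrm{o}}$ the joint law of $(X_u,X_v)$, the tower rule under $P$ gives $\mathbb{E}_P[f\,\vert\,X_u=x_u]=\mathbb{E}_{P^{\mathrm{o}}}\bigl[\mathbb{E}_P[f\,\vert\,X_u,X_v]\,\vert\,X_u=x_u\bigr]\leq\mathbb{E}_{P^{\mathrm{o}}}[h\,\vert\,X_u=x_u]+\varepsilon\leq\lexp[h\,\vert\,X_u=x_u]+2\varepsilon$. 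Hence $\lexp[f\,\vert\,X_u=x_u]\leq\lexp[h\,\vert\,X_u=x_u]+2\varepsilon$ for every $x_u$, and letting $\varepsilon\downarrow0$ finishes the proof; the case $u=\emptyset$ is identical with unconditional lower expectations.

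The main obstacle is entirely in the ``$\leq$'' direction, and specifically in making the splicing construction rigorous in continuous time: one must verify that the recombined object is a bona fide continuous-time stochastic process, that it lies in $\mathbb{P}_{\rateset,\mathcal{M}}$ (this is exactly where boundedness, convexity and separate specification of rows are indispensable, and where one leans on the technical machinery of~\citep{krak2016ictmc}), and that its conditional expectations split cleanly across the cut at time $s$. Two minor points should also be dispatched: the infima defining the lower expectations need not be attained, which is why the argument runs with an $\varepsilon$-slack throughout; and $\lexp[f\,\vert\,X_u,X_v]$ should first be argued to be a well-defined element of $\gambles(\states_{u\cup v})$, which follows from the conditioning results for ICTMC's in~\citep{krak2016ictmc}. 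Alternatively one could prove the case $\lvert v\rvert=1$ first and induct on $\lvert v\rvert$ by peeling off one time-point at a time, but this merely reshuffles the core difficulty rather than avoiding it.
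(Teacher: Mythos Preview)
The paper does not prove this lemma at all: it is simply quoted, with attribution, as Theorem~6.5 of \cite{krak2016ictmc}. So there is no ``paper's own proof'' to compare against; what you have written is a proof outline for a result the present paper takes as given.

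That said, your outline is the standard and correct strategy for this kind of statement, and it is almost certainly the route taken in \cite{krak2016ictmc}: the ``$\geq$'' direction via the precise tower rule and monotonicity is exactly right, and the ``$\leq$'' direction via an $\varepsilon$-approximate outer process spliced at the last time of $v$ with history-dependent inner processes is the natural construction. One point deserves more care than you give it: your asserted equality $\mathbb{E}_P[f\,\vert\,X_u{=}x_u,X_v{=}x_v]=\mathbb{E}_{P^{x_v}}[f\,\vert\,X_u{=}x_u,X_v{=}x_v]$ is not immediate, because the individual processes in $\mathbb{P}_{\rateset,\mathcal{M}}$ need not be Markov, so the conditional law of $X_w$ given $(X_u,X_v)$ can still depend on the (unconditioned) path up to $s$, and that path has law $P^{\mathrm{o}}$ under your spliced $P$ but law $P^{x_v}$ under $P^{x_v}$. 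What actually makes the argument go through is that the \emph{lower} conditional expectation $\lexp[f\,\vert\,X_u,X_v]$ is insensitive to this---essentially the imprecise Markov property of \cite{krak2016ictmc}---so that one can achieve $\mathbb{E}_P[f\,\vert\,X_u{=}x_u,X_v{=}x_v]\leq h(x_u,x_v)+\varepsilon$ by choosing only the post-$s$ rate matrices appropriately, regardless of the pre-$s$ law inherited from $P^{\mathrm{o}}$. You are right that this is where the structural hypotheses on $\rateset$ do their work, and right to defer the continuous-time details to \cite{krak2016ictmc}; just be aware that the step you flagged as routine (``by construction $\mathbb{E}_P[\cdots]=\mathbb{E}_{P^{x_v}}[\cdots]$'') is in fact where those hypotheses enter.
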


\section{Globally Required Proofs and Lemmas}

The following property will be useful. The result is rather trivial, but we note it here explicitly to prevent confusion when we use it in our proofs.
\begin{lemma}\label{lemma:limit_exp_is_exp_limit}
Let $P_\states$ be a stochastic process and consider any $u\in\mathcal{U}$ and any $\{f_i\}_{i\in\nats}\to f$ in $\gambles(\states_u)$. Then $\lim_{i\to+\infty}\mathbb{E}_{P_\states}[f_i(X_u)]=\mathbb{E}_{P_\states}[f(X_u)]$.
\end{lemma}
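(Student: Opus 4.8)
The plan is to reduce the statement to a finite-dimensional fact about real vectors and then invoke continuity of finite sums. Since $\states$ is finite, $\states_u = \prod_{t\in u}\states_t$ is a finite set, so $\gambles(\states_u)$ is a finite-dimensional vector space, and convergence $\{f_i\}_{i\in\nats}\to f$ in the $L^\infty$-norm is equivalent to pointwise convergence: $\lim_{i\to+\infty} f_i(x_u) = f(x_u)$ for every $x_u\in\states_u$. So first I would spell out this equivalence.

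Next, I would write the expectation explicitly as a finite sum. For a (precise) stochastic process $P_\states$, the marginal distribution $P_\states(X_u = x_u)$ over the finite joint state-space $\states_u$ is an ordinary probability mass function, and
\begin{equation*}
\mathbb{E}_{P_\states}[f_i(X_u)] = \sum_{x_u\in\states_u} f_i(x_u)\, P_\states(X_u = x_u)\,.
\end{equation*}
Then I would take the limit inside the sum: since the sum is over the finite index set $\states_u$, and each term $f_i(x_u) P_\states(X_u = x_u)$ converges to $f(x_u) P_\states(X_u=x_u)$ (using $f_i(x_u)\to f(x_u)$ and that $P_\states(X_u=x_u)$ is a fixed constant), we get $\lim_{i\to+\infty}\mathbb{E}_{P_\states}[f_i(X_u)] = \sum_{x_u\in\states_u} f(x_u) P_\states(X_u=x_u) = \mathbb{E}_{P_\states}[f(X_u)]$. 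This uses only the elementary fact that the limit of a finite sum is the sum of the limits, together with continuity of scalar multiplication.

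There is essentially no obstacle here; the only thing to be slightly careful about is making sure the interchange of limit and sum is justified purely by finiteness of $\states_u$ (so no dominated-convergence machinery is needed), and to note that the result holds for every fixed $P_\states$ without any imprecision entering — this lemma is stated for precise processes and is used as a building block elsewhere. Optionally, I would remark that one could alternatively bound $\bigl|\mathbb{E}_{P_\states}[f_i(X_u)] - \mathbb{E}_{P_\states}[f(X_u)]\bigr| \leq \norm{f_i - f}$ directly, since $P_\states(X_u=x_u)$ sums to one over $x_u\in\states_u$, which makes the convergence immediate and even uniform over the choice of $P_\states$; I would likely present this one-line norm bound as the cleanest proof and mention the summation argument as the underlying reason it works.
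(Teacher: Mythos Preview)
Your proposal is correct and matches the paper's approach: the paper's proof is the one-liner ``Trivial consequence of the definition of our norm $\norm{\cdot}$ on $\gambles(\states_u)$,'' which is exactly your norm bound $\bigl|\mathbb{E}_{P_\states}[f_i(X_u)] - \mathbb{E}_{P_\states}[f(X_u)]\bigr| \leq \norm{f_i - f}$. Your more detailed summation argument is a perfectly fine elaboration of why this bound holds.
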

\begin{proof}
Trivial consequence of the definition of our norm $\norm{\cdot}$ on $\gambles(\states_u)$.
\end{proof}

The following lemma states the imprecise analogue of the above result; this is essentially well-known, but we repeat it here for the sake of completeness.

\begin{lemma}\label{lemma:limit_lexp_is_lexp_limit}
Let $\mathbb{P}_{\rateset,\mathcal{M}}$ be an ICTMC and consider any $u\in\mathcal{U}$ and any $\{f_i\}_{i\in\nats}\to f$ in $\gambles(\states_u)$. Then $\lim_{i\to+\infty}\lexp[f_i(X_u)]=\lexp[f(X_u)]$.
\end{lemma}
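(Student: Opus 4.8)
The plan is to reduce the imprecise statement to the precise one (Lemma~\ref{lemma:limit_exp_is_exp_limit}) by a uniform-estimate argument, exploiting that the $L^\infty$-distance between $f_i$ and $f$ controls the difference of expectations uniformly over all $P_\states\in\mathbb{P}_{\rateset,\mathcal{M}}$, and then taking infima.

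\begin{proof}
Fix $\epsilon>0$. Since $\{f_i\}_{i\in\nats}\to f$ in $\gambles(\states_u)$ under the norm $\norm{\cdot}=\norm{\cdot}_\infty$, there is $N\in\nats$ such that $\norm{f_i-f}<\epsilon$ for all $i\geq N$. Now fix any $i\geq N$. For every $P_\states\in\mathbb{P}_{\rateset,\mathcal{M}}$, monotonicity and constant-additivity of the (precise) expectation $\mathbb{E}_{P_\states}[\cdot\,]$ give
\begin{equation*}
\abs{\mathbb{E}_{P_\states}[f_i(X_u)] - \mathbb{E}_{P_\states}[f(X_u)]} \leq \mathbb{E}_{P_\states}\bigl[\abs{f_i(X_u)-f(X_u)}\bigr] \leq \norm{f_i - f} < \epsilon\,,
\end{equation*}
and in particular $\mathbb{E}_{P_\states}[f(X_u)] - \epsilon < \mathbb{E}_{P_\states}[f_i(X_u)] < \mathbb{E}_{P_\states}[f(X_u)] + \epsilon$. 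Taking the infimum over all $P_\states\in\mathbb{P}_{\rateset,\mathcal{M}}$ on each side—and using that adding a constant commutes with the infimum—yields $\lexp[f(X_u)] - \epsilon \leq \lexp[f_i(X_u)] \leq \lexp[f(X_u)] + \epsilon$, i.e. $\abs{\lexp[f_i(X_u)] - \lexp[f(X_u)]} \leq \epsilon$ for all $i\geq N$. Since $\epsilon>0$ was arbitrary, this shows $\lim_{i\to+\infty}\lexp[f_i(X_u)] = \lexp[f(X_u)]$.
\end{proof}

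The one point that needs a little care—and the only place where I would slow down in a fully detailed write-up—is the interchange of the infimum with the additive constant $\pm\epsilon$, i.e. that $\inf_{P_\states}\bigl(\mathbb{E}_{P_\states}[f(X_u)]\pm\epsilon\bigr) = \lexp[f(X_u)]\pm\epsilon$; this is immediate from the definition of $\lexp$ as an infimum of precise expectations, but it is worth stating explicitly so the reader sees that no appeal to compactness of $\mathbb{P}_{\rateset,\mathcal{M}}$ or to attainment of the infimum is required. Everything else is a routine $\epsilon$-chase built on the uniform bound $\abs{\mathbb{E}_{P_\states}[f_i]-\mathbb{E}_{P_\states}[f]}\leq\norm{f_i-f}$, which holds for each precise process and hence passes to the lower envelope. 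No topological assumptions on $\rateset$ or $\mathcal{M}$ beyond those already standing are needed.
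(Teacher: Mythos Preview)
Your proof is correct. The uniform bound $\abs{\mathbb{E}_{P_\states}[f_i(X_u)]-\mathbb{E}_{P_\states}[f(X_u)]}\leq\norm{f_i-f}$ holds for every $P_\states\in\mathbb{P}_{\rateset,\mathcal{M}}$, and passing to the infimum on each side of the resulting sandwich is justified exactly as you note: the constant $\pm\epsilon$ pulls out of the infimum trivially. No compactness or attainment is needed.

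The paper takes a different route: rather than arguing directly, it observes that $\lexp[\,\cdot\,]$, being a pointwise infimum of linear expectations, is a coherent lower prevision on $\gambles(\states_u)$ \citep[Theorem~3.3.3]{Walley:1991vk}, and then invokes the general fact that coherent lower previsions are $\norm{\cdot}_\infty$-continuous \citep[Proposition~2.6.1.$\ell$]{Walley:1991vk}. Your argument is more elementary and entirely self-contained---it does not require the reader to know or look up the machinery of coherent lower previsions---while the paper's version is shorter on the page because it outsources the $\epsilon$-bookkeeping to Walley. Substantively the two arguments are the same idea (uniform control of $\mathbb{E}_{P_\states}[f_i]-\mathbb{E}_{P_\states}[f]$ by $\norm{f_i-f}$), just packaged differently; your version has the advantage of making explicit that nothing beyond the definition of $\lexp$ as an infimum is used.
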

\begin{proof}
Keeping $u$ fixed, then since $\lexp[\,\cdot\,]\,:\,\gambles(\states_u)\to\reals$ is an infimum over precise expectations $\mathbb{E}_{P_\states}[\,\cdot\,]\,:\,\gambles(\states_u)\to\reals$, with $P_\states\in\mathbb{P}_{\rateset,\mathcal{M}}$, we know from~\cite[Theorem 3.3.3]{Walley:1991vk} that $\lexp[\,\cdot\,]$ is a coherent lower prevision on $\gambles(\states_u)$. Therefore, the statement follows directly from~\cite[Proposition 2.6.1.$\ell$]{Walley:1991vk} and the definition of our norm $\norm{\cdot}$ on $\gambles(\states_u)$.
\end{proof}

We provide the proof of Proposition~\ref{prop:GBR_properties} below; this is not in chronological order with respect to the main text, but it states a number of convenient properties that are required in the proofs of statements that appear before Proposition~\ref{prop:GBR_properties}. We first need the following lemma.

\begin{lemma}\label{lemma:conditioning_zero_means_bayes_zero}
Let $P_\states$ be a stochastic process and consider any $u,v\in\mathcal{U}$, any $f\in\gambles(\states_v)$ and any $g\in\gambles(\states_u)$ such that $g\geq 0$. If\/ $\mathbb{E}_{P_\states}[g(X_u)]=0$, then for all $\mu\in\reals$ it holds that
\begin{equation*}
\mathbb{E}_{P_\states}\bigl[g(X_u)\bigl(f(X_v) - \mu\bigr)\bigr]=0\,.
\end{equation*}
\end{lemma}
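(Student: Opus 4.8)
The plan is to reduce everything to finite sums and exploit that $g$ is nonnegative. Since $\states$ is finite and $u,v\in\mathcal{U}$ are finite sequences, the joint state space $\states_{u\cup v}$ is finite, so the expectation under $P_\states$ of any function on it is just a finite weighted sum of its values, with nonnegative weights of the form $P_\states(X_{u\cup v}=x_{u\cup v})$. In particular, by additivity (marginalisation), $\mathbb{E}_{P_\states}[g(X_u)]=\sum_{x_u\in\states_u}g(x_u)P_\states(X_u=x_u)$, and every summand is nonnegative because $g\geq 0$.

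First I would observe that, since this finite sum of nonnegative terms equals $0$ by hypothesis, each summand must vanish; hence $P_\states(X_u=x_u)=0$ for every $x_u\in\states_u$ with $g(x_u)>0$. Next I would invoke monotonicity of $P_\states$ (valid already for finitely additive probability measures): for any such $x_u$, and any compatible completion to an assignment $x_{u\cup v}\in\states_{u\cup v}$, the event $(X_{u\cup v}=x_{u\cup v})$ is contained in $(X_u=x_u)$, so $P_\states(X_{u\cup v}=x_{u\cup v})=0$ as well. Phrasing things over $\states_{u\cup v}$ in this way also handles the case where $u$ and $v$ share time points without any extra work.

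Finally I would expand $\mathbb{E}_{P_\states}\bigl[g(X_u)(f(X_v)-\mu)\bigr]$ as the finite sum $\sum_{x_{u\cup v}\in\states_{u\cup v}}g(x_u)\bigl(f(x_v)-\mu\bigr)P_\states(X_{u\cup v}=x_{u\cup v})$ and check it term by term: for indices with $g(x_u)=0$ the summand is zero, while for indices with $g(x_u)>0$ the probability weight is zero by the previous step, so the summand is again zero. Hence the whole sum is zero, as claimed; no integrability issue arises since $f(X_v)-\mu$ is a bounded function on a finite space. There is no real obstacle here beyond this small amount of bookkeeping across the (possibly overlapping) index sets $u$ and $v$ — the statement is, as advertised, essentially trivial.
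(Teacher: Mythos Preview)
Your proposal is correct and follows essentially the same approach as the paper's proof: both observe that $\mathbb{E}_{P_\states}[g(X_u)]=0$ with $g\geq 0$ forces $P_\states(X_u=x_u)=0$ whenever $g(x_u)>0$, then use monotonicity to push this down to the joint $P_\states(X_{u\cup v}=x_{u\cup v})$, and finally expand the expectation as a finite sum over $\states_{u\cup v}$ in which every term vanishes. The only cosmetic difference is that the paper writes the sum over $\states_u\times\states_{v\setminus u}$ rather than $\states_{u\cup v}$, which amounts to the same thing.
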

\begin{proof}
Because $\mathbb{E}_{P_\states}[g(X_u)]=0$, and since $g\geq 0$, we must clearly have that
\begin{equation*}
P_\states(X_u=x_u) = 0\,,
\end{equation*}
for all $x_u\in\states_u$ for which $g(x_u)\neq 0$. Let $\states_u^0\coloneqq\{x_u\in\states_u\,:\,g(x_u)\neq 0\}$. Then clearly for any $x_u\in\states_u^0$ it holds for all $x_{v\setminus u}\in\states_{v\setminus u}$ that also $P_\states(X_u=x_u,X_{v\setminus u}=x_{v\setminus u})=0$. Hence, for all $x_u\in\states_u$ and $x_{v\setminus u}\in\states_{v\setminus u}$, we find that $P_\states(X_u=x_u,X_{v\setminus u}=x_{v\setminus u})g(x_u)=0$. Therefore,
\begin{align*}
\mathbb{E}_{P_\states}\bigl[g(X_u)\bigl(f(X_v) - \mu\bigr)\bigr] &= \sum_{x_{u\cup v}\in\states_{u\cup v}} P_\states(X_{u\cup v}=x_{u\cup v})g(x_u)\bigl(f(x_v) - \mu\bigr) \\
 &= \sum_{x_{u}\in\states_{u}}\sum_{x_{v\setminus u}\in\states_{v\setminus u}} P_\states(X_u=x_u,X_{v\setminus u}=x_{v\setminus u})g(x_u)\bigl(f(x_v) - \mu\bigr)=0. 
\end{align*}
\end{proof}

\begin{proof}{\bf of Proposition~\ref{prop:GBR_properties}~}
For brevity, define $g\in\gambles(\states_u)$ as $g(x_u)\coloneqq \prod_{t\in u} g_{t}(x_{t})$ for all $x_u\in\states_u$.

We start by proving Property~\ref{GBR:always}. For continuity, consider any $\mu\in\reals$. We will prove that $G$ is continuous in $\mu$, or in other words, that for every sequence $\{\mu_i\}_{i\in\nats}\to\mu$ it holds that $\lim_{i\to\infty}G(\mu_i)=G(\mu)$. So, choose any sequence $\{\mu_i\}_{i\in\nats}\to\mu$, and consider the induced sequence of functions $\bigl\{g(X_u)\bigl(f(X_v) - \mu_i\bigr)\bigr\}_{i\in\nats}$ in $\gambles(\states_{u\cup v})$. Then, since $\{\mu_i\}_{i\in\nats}\to\mu$, clearly also $\lim_{i\to+\infty}g(X_u)\bigl(f(X_v) - \mu_i\bigr)=g(X_u)\bigl(f(X_v) - \mu\bigr)$. Using Lemma~\ref{lemma:limit_lexp_is_lexp_limit}, we therefore find that
\begin{equation*}
\lim_{i\to+\infty}\lexp[g(X_u)\bigl(f(X_v) - \mu_i\bigr)] = \lexp[g(X_u)\bigl(f(X_v) - \mu\bigr)]\,,
\end{equation*}
or in other words, that $\lim_{i\to+\infty}G(\mu_i) = G(\mu)$. Since the sequence $\{\mu_i\}_{i\in\nats}$ was arbitrary, this concludes the proof.

We next prove that $G$ is non-increasing. To this end, fix any $P_\states\in\mathbb{P}_{\rateset,\mathcal{M}}$. Then, by the linearity of expectation operators,
\begin{align}\label{eq:gbr_linear_expansion}
\mathbb{E}_{P_\states}\bigl[g(X_u)\bigl(f(X_v) - \mu\bigr)\bigr] = \mathbb{E}_{P_\states}[g(X_u)f(X_v)] - \mu\mathbb{E}_{P_\states}[g(X_u)]\,.
\end{align}
Since by assumption $g\geq 0$, it must hold that $\mathbb{E}_{P_\states}[g(X_u)]\geq 0$, and so, that $\mathbb{E}_{P_\states}\bigl[g(X_u)\bigl(f(X_v) - \mu\bigr)\bigr]$ is non-increasing in $\mu$. Since this is true for all ${P_\states}\in\mathbb{P}_{\rateset,\mathcal{M}}$, we find that $G(\mu)$ is a lower envelope of non-increasing functions, which must therefore be non-increasing itself. This concludes the proof.

For concavity, fix any $\mu,\nu\in\reals$, and choose any $\lambda\in[0,1]$. Let $\mu'\coloneqq \lambda\mu + (1-\lambda)\nu$. We need to show that $\lambda G(\mu) + (1-\lambda)G(\nu) \leq G(\mu')$. To this end, fix any $\epsilon\in\realspos$. Then, there is some ${P_\states}\in\mathbb{P}_{\rateset,\mathcal{M}}$ such that
\begin{equation}\label{eq:gbr_prop:convex_approached}
\mathbb{E}_{P_\states}\left[g(X_u)\bigl(f(X_v) - \mu'\bigr)\right] - \epsilon < G(\mu').
\end{equation}
By expanding the convex combination $\mu'$ using the linearity of expectation operators, we find that
\begin{align*}
&\mathbb{E}_{P_\states}\left[g(X_u)\bigl(f(X_v) - \mu'\bigr)\right]\\  &= \mathbb{E}_{P_\states}\left[g(X_u)f(X_v)\right] - \mu'\mathbb{E}_{P_\states}\left[g(X_u)\right] \\
 &= \mathbb{E}_{P_\states}\left[g(X_u)f(X_v)\right] - \lambda\mu\mathbb{E}_{P_\states}\left[g(X_u)\right] - (1-\lambda)\nu\mathbb{E}_{P_\states}\left[g(X_u)\right] \\
 &= \bigl(\lambda + (1-\lambda)\bigr)\mathbb{E}_{P_\states}\left[g(X_u)f(X_v)\right] - \lambda\mu\mathbb{E}_{P_\states}\left[g(X_u)\right] - (1-\lambda)\nu\mathbb{E}_{P_\states}\left[g(X_u)\right] \\
 &= \lambda\mathbb{E}_{P_\states}\left[g(X_u)f(X_v)\right] - \lambda\mu\mathbb{E}_{P_\states}\left[g(X_u)\right]
+(1-\lambda)\mathbb{E}_{P_\states}\left[g(X_u)f(X_v)\right] - (1-\lambda)\nu\mathbb{E}_{P_\states}\left[g(X_u)\right] \\
 &= \lambda\mathbb{E}_{P_\states}\left[g(X_u)\bigl(f(X_v) - \mu\bigr)\right] + (1-\lambda)\mathbb{E}_{P_\states}\left[g(X_u)\bigl(f(X_v) - \nu\bigr)\right] \\
 &\geq \lambda\lexp\left[g(X_u)\bigl(f(X_v) - \mu\bigr)\right] + (1-\lambda)\lexp\left[g(X_u)\bigl(f(X_v) - \nu\bigr)\right] \\
 &= \lambda G(\mu) + (1-\lambda) G(\nu)\,,
\end{align*}
where the inequality follows from the fact that $\lambda$ and $(1-\lambda)$ are non-negative, and $P_\states\in\mathbb{P}_{\rateset,\mathcal{M}}$---hence in particular $\lexp\leq\mathbb{E}_{P_\states}$.
Combining with Equation~\eqref{eq:gbr_prop:convex_approached}, we find that
\begin{equation*}
\lambda G(\mu) + (1-\lambda) G(\nu) -\epsilon \leq \mathbb{E}_{P_\states}\left[g(X_u)\bigl(f(X_v) - \mu'\bigr)\right] - \epsilon < G(\mu')\,.
\end{equation*}
Since the $\epsilon\in\realspos$ was arbitrary, this concludes the proof.

To prove that the function has a root, first consider any $\mu<\min f$. Then, for every $P_\states\in\mathbb{P}_{\rateset,\mathcal{M}}$, using the assumption that $g\geq 0$,
\begin{equation*}
\mathbb{E}_{P_\states}[g(X_u)f(X_v)] \geq \mathbb{E}_{P_\states}[g(X_u)\min f] = \mu\mathbb{E}_{P_\states}[g(X_u)]\,,
\end{equation*}
and hence, using Equation~\eqref{eq:gbr_linear_expansion}, we find that $\mathbb{E}_{P_\states}\bigl[g(X_u)\bigl(f(X_v)-\mu\bigr)\bigr] \geq 0$. Since this is true for all $P_\states\in \mathbb{P}_{\rateset,\mathcal{M}}$, we find that $G(\mu)\geq 0$ for this choice of $\mu$. Next, consider any $\nu>\max f$. Then by a completely analogous argument---just reverse the inequalities---we find that $G(\nu)\leq 0$ for this choice of $\nu$. Therefore, and since we already know that $G$ is continuous, by the intermediate value theorem there must now be some $\mu_*\in[\mu,\nu]$ such that $G(\mu_*)=0$. This concludes the proof.

We next prove \ref{GBR:low_pos}, and start by showing that $G$ is (strictly) decreasing if $\lexp[g(X_u)]>0$. To this end, consider any $\mu\in\reals$ and any $\Delta\in\realspos$. We need to show that $G(\mu)>G(\mu+\Delta)$. Let $\epsilon\coloneqq \Delta \lexp[g(X_u)]$; clearly then $\epsilon>0$. Therefore, there is some $P_\states\in\mathbb{P}_{\rateset,\mathcal{M}}$ such that
\begin{equation}\label{eq:gbr_prop:approached_closeby}
\mathbb{E}_{P_\states}\bigl[g(X_u)\bigl(f(X_v)-\mu\bigr)\bigr] - \epsilon < G(\mu)\,.
\end{equation}
Expanding the left hand side as in Equation~\eqref{eq:gbr_linear_expansion}, we find
\begin{align*}
\mathbb{E}_{P_\states}\bigl[g(X_u)\bigl(f(X_v)-\mu\bigr)\bigr] - \epsilon &= \mathbb{E}_{P_\states}\bigl[g(X_u)f(X_v)\bigr] - \mu \mathbb{E}_{P_\states}\bigl[g(X_u)\bigr] - \epsilon \\
 &= \mathbb{E}_{P_\states}\bigl[g(X_u)f(X_v)\bigr] - \mu \mathbb{E}_{P_\states}\bigl[g(X_u)\bigr] - \Delta\lexp[g(X_u)] \\
 &\geq \mathbb{E}_{P_\states}\bigl[g(X_u)f(X_v)\bigr] - \mu \mathbb{E}_{P_\states}\bigl[g(X_u)\bigr] - \Delta\mathbb{E}_{P_\states}\bigl[g(X_u)\bigr] \\
 &= \mathbb{E}_{P_\states}\bigl[g(X_u)f(X_v)\bigr] - (\mu+\Delta) \mathbb{E}_{P_\states}\bigl[g(X_u)\bigr] \\
 &= \mathbb{E}_{P_\states}\bigl[g(X_u)\bigl(f(X_v)-(\mu+\Delta)\bigr)\bigr] \\
 &\geq G(\mu+\Delta)\,,
\end{align*}
where the first inequality follows from the fact that $\Delta$ and $\lexp[g(X_u)]$ are strictly positive, and $\lexp[g(X_u)]\leq \mathbb{E}_{P_\states}\bigl[g(X_u)\bigr]$ since $P_\states\in\mathbb{P}_{\rateset,\mathcal{M}}$. Combining with Equation~\eqref{eq:gbr_prop:approached_closeby} shows that
\begin{equation*}
G(\mu+\Delta) \leq \mathbb{E}_{P_\states}\bigl[g(X_u)\bigl(f(X_v)-\mu\bigr)\bigr] - \epsilon < G(\mu)\,,
\end{equation*}
which concludes the proof.

To prove that $G$ has a \emph{unique} root under the assumption that $\lexp[g(X_u)]>0$, note that we already know that $G$ has at least one root, i.e. $G(\mu)=0$ for some $\mu\in\reals$. By combining with the fact that $G$ is strictly decreasing under the assumption that $\lexp[g(X_u)]>0$, the uniqueness of this root follows immediately.

We next prove~\ref{GBR:up_pos}. Lemma~\ref{lemma:general_regular_extension} below states that $G$ has a maximum root if $\uexp[g(X_u)]>0$, so we do not need to prove this here. So, let $\mu_*\coloneqq \max\{\mu\in\reals\,:\,G(\mu)\geq 0\}$ be this maximum root.

We will now prove that $G(\mu)=0$ for all $\mu\leq\mu_*$ when $\lexp[g(X_u)]=0$ but $\uexp[g(X_u)]>0$. So, consider any $\mu<\mu_*$ (the case for $\mu=\mu_*$ is trivial). Since $\mu<\mu_*$, we already know that $G(\mu)$ is non-negative because $G$ is non-increasing; so, it suffices to show that $G(\mu)$ is non-positive. Now, for every $P_\states\in\mathbb{P}_{\rateset,\mathcal{M}}$ we have that
\begin{align}
\mathbb{E}_{P_\states}\bigl[g(X_u)\bigl(f(X_v) - \mu\bigr)\bigr] 
&= \sum_{x_{u\cup v}} P_\states(X_{u\cup v}=x_{u\cup v})g(x_u)\bigl(f(x_v)-\mu\bigr) \notag\\
 &\leq \sum_{x_{u\cup v}} P_\states(X_{u\cup v}=x_{u\cup v})g(x_u)\abs{f(x_v)-\mu} \notag\\
 &\leq \sum_{x_{u\cup v}} P_\states(X_{u\cup v}=x_{u\cup v})g(x_u)\norm{f-\mu} \notag\\
 &= \sum_{x_{u}} P_\states(X_{u}=x_{u})g(x_u)\norm{f-\mu}= \norm{f-\mu}\mathbb{E}_{P_\states}\bigl[g(X_u)\bigr],
\label{eq:gbr_prop:bound_value}
\end{align}
where the first inequality follows from the fact that $g\geq 0$, the second inequality is due to the definition of the norm $\norm{\cdot}$, and the second equality follows from the law of total probability (that is, $X_{v\setminus u}$ is marginalised out). Since $G(\mu)\leq \mathbb{E}_{P_\states}\bigl[g(X_u)\bigl(f(X_v) - \mu\bigr)\bigr]$, it follows from the above that if $\norm{f-\mu}=0$ then $G(\mu)\leq 0$, in which case we are done. 

Hence, we can assume without loss of generality that $\norm{f-\mu}>0$. Choose any $\epsilon\in\realspos$. Then, because $\lexp[g(X_u)]=0$, there is some $P_\states\in\mathbb{P}_{\rateset,\mathcal{M}}$ such that $\mathbb{E}_{P_\states}[g(X_u)] < \nicefrac{\epsilon}{\norm{f-\mu}}$, which implies that
\begin{equation*}
\epsilon > \norm{f-\mu}\mathbb{E}_{P_\states}[g(X_u)] \geq \mathbb{E}_{P_\states}\bigl[g(X_u)\bigl(f(X_v) - \mu\bigr)\bigr] \geq G(\mu)\,,
\end{equation*}
using Equation~\eqref{eq:gbr_prop:bound_value} for the second inequality. Since the $\epsilon\in\realspos$ was arbitrary, this implies that $G(\mu)$ is non-positive, which concludes the proof.

We next show that $G$ is strictly decreasing for $\mu>\mu_*$. So consider any $\mu\in\reals$ such that $\mu>\mu_*$, and any $\Delta\in\realspos$; we need to show that $G(\mu)>G(\mu+\Delta)$. Because $\mu>\mu_*$, and since $\mu_*=\max\{\mu\in\reals\,:\,G(\mu)\geq 0\}$, we know that $G(\mu)<0$.

First note that for any $\epsilon\in\realspos$, there is some $P_{\states}\in\mathbb{P}_{\rateset,\mathcal{M}}$ for which
\begin{equation*}
G(\mu) > \mathbb{E}_{P_\states}\bigl[g(X_u)\bigl(f(X_v)-\mu\bigr)\bigr] - \epsilon\,,
\end{equation*}
and clearly also
\begin{equation*}
\mathbb{E}_{P_\states}\bigl[g(X_u)\bigl(f(X_v)-\mu_*\bigr)\bigr] \geq G(\mu_*) = 0\,.
\end{equation*}
Therefore,
\begin{align*}
G(\mu)+\epsilon > \mathbb{E}_{P_\states}\bigl[g(X_u)\bigl(f(X_v)-\mu\bigr)\bigr] 
 &\geq \mathbb{E}_{P_\states}\bigl[g(X_u)\bigl(f(X_v)-\mu\bigr)\bigr] - \mathbb{E}_{P_\states}\bigl[g(X_u)\bigl(f(X_v)-\mu_*\bigr)\bigr] \\
 &= -\mu\mathbb{E}_{P_\states}[g(X_u)] + \mu_*\mathbb{E}_{P_\states}[g(X_u)]
 = (\mu_*-\mu)\mathbb{E}_{P_\states}[g(X_u)]
\end{align*}
and so, negating both sides and noting again that $G(\mu)<0$,
\begin{equation*}
(\mu-\mu_*)\mathbb{E}_{P_\states}[g(X_u)] > \abs{G(\mu)}-\epsilon\,,
\end{equation*}
and dividing through,
\begin{equation}\label{eq:gbr_prop:lower_bound_slope}
\mathbb{E}_{P_\states}[g(X_u)] > \frac{\abs{G(\mu)}-\epsilon}{\mu-\mu_*}\,,
\end{equation}
for any $P_\states$ that $\epsilon$-approaches $G(\mu)$. 

The idea is now the following: Equation~\eqref{eq:gbr_prop:lower_bound_slope} provides a lower bound on the (absolute magnitude of the) slope of the function $\mathbb{E}_{P_\states}\bigl[g(X_u)\bigl(f(X_v)-\mu\bigr)\bigr]$ for any $P_\states$ that $\epsilon$-approaches $G(\mu)$. Since this function can still be a distance of $\epsilon$ above $G(\mu)$, we now need to make sure that the slope is such that this function will decrease by more than $\epsilon$ after increasing $\mu$ by $\Delta$; this will guarantee that this function becomes strictly lower than $G(\mu)$ when evaluated at $\mu+\Delta$, and since it is an upper bound on $G(\mu+\Delta)$, this will complete the proof.

Note that this lower bound increases as we decrease $\epsilon$. In order to ensure that we get a large enough slope, we now solve the following for $\epsilon$:
\begin{align}
\Delta\frac{\abs{G(\mu)}-\epsilon}{\mu-\mu_*} > \epsilon 
&\asa
\frac{\Delta\abs{G(\mu)}}{\mu-\mu_*} - \frac{\Delta\epsilon}{\mu-\mu_*} > \epsilon\notag\\
&\asa
\frac{\Delta\abs{G(\mu)}}{\mu-\mu_*} > \epsilon + \frac{\Delta\epsilon}{\mu-\mu_*} \notag\\
&\asa\Delta\abs{G(\mu)} > (\mu-\mu_*)\epsilon + \Delta\epsilon \notag\\
&\asa\Delta\abs{G(\mu)} > \epsilon\bigl((\mu-\mu_*) + \Delta\bigr)
\asa
\frac{\Delta\abs{G(\mu)}}{\Delta + (\mu-\mu_*)} > \epsilon.\label{eq:gbr_prop:solving_for_slope}
\end{align}
So, choose any $\epsilon\in\realspos$ such that $\epsilon< \nicefrac{\Delta\abs{G(\mu)}}{(\Delta + \mu-\mu_*)}$; since the right-hand side is clearly strictly positive, this is always possible. Then there is some $P_\states\in\mathbb{P}_{\rateset,\mathcal{M}}$ such that
\begin{equation*}
\mathbb{E}_{P_\states}\bigl[g(X_u)\bigl(f(X_v)-\mu\bigr)\bigr] - \epsilon < G(\mu)\,.
\end{equation*}
For this same $P_\states$, we then have
\begin{align*}
G(\mu+\Delta) &\leq \mathbb{E}_{P_\states}\bigl[g(X_u)\bigl(f(X_v)-(\mu+\Delta)\bigr)\bigr] \\
 &= \mathbb{E}_{P_\states}\bigl[g(X_u)\bigl(f(X_v)-\mu\bigr)\bigr] - \Delta\mathbb{E}_{P_\states}\bigl[g(X_u)\bigr] \\
 &< \mathbb{E}_{P_\states}\bigl[g(X_u)\bigl(f(X_v)-\mu\bigr)\bigr] - \Delta\frac{\abs{G(\mu)}-\epsilon}{\mu-\mu_*} \\
 &< \mathbb{E}_{P_\states}\bigl[g(X_u)\bigl(f(X_v)-\mu\bigr)\bigr] - \epsilon \\
 &< G(\mu)\,,
\end{align*}
where the second inequality is by Equation~\eqref{eq:gbr_prop:lower_bound_slope}, the third inequality is by Equation~\eqref{eq:gbr_prop:solving_for_slope}, and the final inequality is by the choice of $P_\states$. So, we have found that indeed $G(\mu)>G(\mu+\Delta)$, which concludes the proof.

We finally prove~\ref{GBR:none_pos}, i.e. that $G(\mu)$ is identically zero whenever $\uexp[g(X_u)]=0$. Clearly, this assumption implies that $\mathbb{E}_{P_\states}[g(X_u)]=0$ for all $P_\states\in\mathbb{P}_{\rateset,\mathcal{M}}$. Therefore, and since $g\geq 0$, it follows from Lemma~\ref{lemma:conditioning_zero_means_bayes_zero} that for any $\mu\in\reals$, it holds that $\mathbb{E}_{P_\states}\bigl[g(X_u)\bigl(f(X_v) - \mu\bigr)\bigr]=0$ for all $P_\states\in\mathbb{P}_{\rateset,\mathcal{M}}$. It immediately follows that indeed $G(\mu)=0$ for all $\mu\in\reals$.
\end{proof}

The following lemma states a more general version of the result that the generalised Bayes' rule computes the updated lower expectation of a model under regular extension. We state it here because we use the result for various proofs throughout this appendix.

\begin{lemma}\label{lemma:general_regular_extension}
Let $\mathbb{P}_{\rateset,\mathcal{M}}$ be an ICTMC, and consider any $u,v\in\mathcal{U}$, $f\in\gambles(\states_v)$ and $g\in\gambles(\states_u)$ such that $g\geq 0$. Then, if $\uexp[g(X_u)]>0$, it holds that
\begin{multline*}
 \max\left\{\mu\in\reals\,:\, \lexp\bigl[g(X_u)\bigl(f(X_v) - \mu\bigr)\bigr] \geq 0\right\} \\
 = \inf\left\{ \frac{\mathbb{E}_{P_\states}[f(X_v)g(X_u)]}{\mathbb{E}_{P_\states}[g(X_u)]}\,:\,{P_\states}\in\mathbb{P}_{\rateset,\mathcal{M}}, \mathbb{E}_{P_\states}[g(X_u)]>0 \right\}\,.
\end{multline*}
\end{lemma}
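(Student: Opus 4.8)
The plan is to prove the identity by establishing the two inequalities ``$\leq$'' and ``$\geq$'' separately, after first checking that the maximum on the left-hand side is attained. Write $\mu_* \coloneqq \max\{\mu \in \reals : \lexp[g(X_u)(f(X_v) - \mu)] \geq 0\}$, let $\nu$ denote the infimum on the right-hand side, and set $G(\mu) \coloneqq \lexp[g(X_u)(f(X_v) - \mu)]$. First I would note that $G$ is the lower envelope, over $P_\states \in \mathbb{P}_{\rateset,\mathcal{M}}$, of the affine maps $\mu \mapsto \mathbb{E}_{P_\states}[f(X_v)g(X_u)] - \mu\,\mathbb{E}_{P_\states}[g(X_u)]$, each non-increasing in $\mu$ because $\mathbb{E}_{P_\states}[g(X_u)] \geq 0$ (as $g \geq 0$); so $G$ is non-increasing, it is continuous by Lemma~\ref{lemma:limit_lexp_is_lexp_limit}, it satisfies $G(\mu) \geq 0$ for every $\mu < \min f$ (there $f(X_v) - \mu \geq 0$ and $g \geq 0$), and an elementary bound using $\uexp[g(X_u)] > 0$ shows $G(\mu) < 0$ for every $\mu > \max f$. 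Hence $\{\mu : G(\mu) \geq 0\}$ is non-empty, bounded above and closed, so $\mu_*$ is well defined; moreover $\uexp[g(X_u)] > 0$ yields some $P_\states$ with $\mathbb{E}_{P_\states}[g(X_u)] > 0$, so the index set of the infimum defining $\nu$ is non-empty.

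For ``$\mu_* \leq \nu$'', I would take an arbitrary $P_\states \in \mathbb{P}_{\rateset,\mathcal{M}}$ with $\mathbb{E}_{P_\states}[g(X_u)] > 0$ and combine $G(\mu_*) \geq 0$ with $\lexp[\cdot] \leq \mathbb{E}_{P_\states}[\cdot]$ to obtain $\mathbb{E}_{P_\states}[g(X_u)(f(X_v) - \mu_*)] \geq 0$; expanding by linearity and dividing by the strictly positive $\mathbb{E}_{P_\states}[g(X_u)]$ gives $\mathbb{E}_{P_\states}[f(X_v)g(X_u)]/\mathbb{E}_{P_\states}[g(X_u)] \geq \mu_*$, and taking the infimum over all such $P_\states$ yields $\nu \geq \mu_*$.

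For ``$\mu_* \geq \nu$'', it suffices to show $\nu \leq \mu$ for every $\mu > \mu_*$. Given such a $\mu$, maximality of $\mu_*$ gives $G(\mu) < 0$, so some $P_\states \in \mathbb{P}_{\rateset,\mathcal{M}}$ satisfies $\mathbb{E}_{P_\states}[g(X_u)(f(X_v) - \mu)] < 0$. This is the one step that needs care: since $g \geq 0$ we only know $\mathbb{E}_{P_\states}[g(X_u)] \geq 0$, and I would exclude equality by invoking Lemma~\ref{lemma:conditioning_zero_means_bayes_zero}, which in that case would force $\mathbb{E}_{P_\states}[g(X_u)(f(X_v) - \mu)] = 0$, contradicting the strict inequality. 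With $\mathbb{E}_{P_\states}[g(X_u)] > 0$ established, expanding $\mathbb{E}_{P_\states}[g(X_u)(f(X_v) - \mu)] < 0$ and dividing gives $\mathbb{E}_{P_\states}[f(X_v)g(X_u)]/\mathbb{E}_{P_\states}[g(X_u)] < \mu$, hence $\nu < \mu$. Since this holds for all $\mu > \mu_*$, we get $\nu \leq \mu_*$, and together with the previous step, $\mu_* = \nu$.

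I do not anticipate any real difficulty: the argument is routine once the two bookkeeping points are handled, namely the attainment of the maximum --- which is exactly where the hypothesis $\uexp[g(X_u)] > 0$ enters --- and the exclusion of $\mathbb{E}_{P_\states}[g(X_u)] = 0$ in the second inequality --- which is exactly where the assumption $g \geq 0$ is used, via Lemma~\ref{lemma:conditioning_zero_means_bayes_zero}.
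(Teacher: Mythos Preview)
Your proof is correct, but it is organised quite differently from the paper's, and in a way that is arguably cleaner. The paper starts from the other side: it lets $\mu_*$ denote the \emph{infimum} on the right-hand side, works first with the restricted lower envelope $\underline{\mathbb{E}}[\cdot]\coloneqq\inf\{\mathbb{E}_{P_\states}[\cdot]:P_\states\in\mathcal{P}\}$ over $\mathcal{P}\coloneqq\{P_\states:\mathbb{E}_{P_\states}[g(X_u)]>0\}$, and shows via two separate $\epsilon$-approximation arguments that $\underline{\mathbb{E}}[g(X_u)(f(X_v)-\mu_*)]=0$ and that this quantity is strictly negative for $\mu'>\mu_*$. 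Only at the end does it pass from $\underline{\mathbb{E}}$ to $\lexp$, by observing (via Lemma~\ref{lemma:conditioning_zero_means_bayes_zero}) that processes outside $\mathcal{P}$ contribute zero, so $\lexp[\cdot]=\min\{\underline{\mathbb{E}}[\cdot],0\}$ and the two maxima coincide. By contrast, you work with $\lexp$ throughout, establish attainment of the max up front from continuity and the sign changes of $G$, and then dispatch the two inequalities directly; Lemma~\ref{lemma:conditioning_zero_means_bayes_zero} enters only once, precisely to rule out $\mathbb{E}_{P_\states}[g(X_u)]=0$ for the witnessing process in the ``$\geq$'' step. Your route avoids the auxiliary lower envelope $\underline{\mathbb{E}}$ and the final case split; the paper's route has the minor advantage that existence of the maximum falls out of the argument rather than needing a separate preliminary step.
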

\begin{proof}
Let $\mathcal{P}\coloneqq \left\{ {P_\states}\in\mathbb{P}_{\rateset,\mathcal{M}}\,:\, \mathbb{E}_{P_\states}[g(X_u)] > 0\right\}$, and note that $\mathcal{P}$ is non-empty due to the assumption that $\uexp[g(X_u)]>0$. For all ${P_\states}\in\mathcal{P}$, define
\begin{equation*}
\mu_{P_\states} \coloneqq \frac{\mathbb{E}_{P_\states}[f(X_v)g(X_u)]}{\mathbb{E}_{P_\states}[g(X_u)]}\,,
\end{equation*}
and let $\mu_*$ be defined by
\begin{equation*}
\mu_* \coloneqq \inf\left\{\mu_{P_\states}\,:\,{P_\states}\in\mathcal{P} \right\}\,.
\end{equation*}
Now define the following function, parameterised in $\mu\in\reals$,
\begin{equation*}
\underline{\mathbb{E}}[g(X_u)(f(X_v) - \mu)] \coloneqq \inf\{\mathbb{E}_{P_\states}[g(X_u)(f(X_v) - \mu)]\,:\,{P_\states}\in\mathcal{P} \}\,,
\end{equation*}
and consider $\underline{\mathbb{E}}[g(X_u)(f(X_v) - \mu_*)]$. We start by showing that this quantity is non-negative. To this end, fix any $\epsilon>0$. Then, there is some ${P_\states}\in\mathcal{P}$ such that
\begin{equation*}
\mathbb{E}_{P_\states}[g(X_u)(f(X_v) - \mu_*)] - \epsilon < \underline{\mathbb{E}}[g(X_u)(f(X_v) - \mu_*)]\,.
\end{equation*}
Using Equation~\eqref{eq:gbr_linear_expansion}, the function $\mathbb{E}_{P_\states}[g(X_u)(f(X_v) - \mu)]$ is non-increasing in $\mu$. Therefore, and since $\mu_*\leq \mu_{P_\states}$, we have
\begin{equation*}
\mathbb{E}_{P_\states}[g(X_u)(f(X_v) - \mu_{P_\states})] - \epsilon \leq \mathbb{E}_{P_\states}[g(X_u)(f(X_v) - \mu_*)] - \epsilon < \underline{\mathbb{E}}[g(X_u)(f(X_v) - \mu_*)]\,.
\end{equation*}
Due to the choice of $\mu_{P_\states}$, we have $\mathbb{E}_{P_\states}[g(X_u)(f(X_v) - \mu_{P_\states})]=0$, and so we find that
\begin{equation*}
-\epsilon < \underline{\mathbb{E}}[g(X_u)(f(X_v) - \mu_*)]\,.
\end{equation*}
Since this is true for every $\epsilon>0$, we conclude that $0\leq \underline{\mathbb{E}}[g(X_u)(f(X_v) - \mu_*)]$. Next, we show that this quantity is also non-positive, or in other words, that $\mu_*$ is a root of this function. 

To this end, fix any $\epsilon>0$, and define $\epsilon'\coloneqq \nicefrac{\epsilon}{\uexp[g(X_u)]}$; since by assumption $\uexp[g(X_u)]>0$, we have $\epsilon' >0$. Now consider ${P_\states}\in\mathcal{P}$ such that
\begin{equation*}
\mu_{P_\states} - \epsilon' < \mu_*\,.
\end{equation*}
Then, since $\underline{\mathbb{E}}[g(X_u)(f(X_v) - \mu)]$ is non-increasing in $\mu$---because it is a lower envelope of non-increasing functions---we have that
\begin{equation*}
\underline{\mathbb{E}}[g(X_u)(f(X_v) - \mu_*)] \leq \underline{\mathbb{E}}[g(X_u)(f(X_v) - (\mu_{P_\states} - \epsilon'))]\leq \mathbb{E}_{P_\states}[g(X_u)(f(X_v) - (\mu_{P_\states} - \epsilon'))]\,.
\end{equation*}
Expanding the r.h.s. using the linearity of expectation operators, and by the definition of $\mu_{P_\states}$, we then have
\begin{multline*}
\mathbb{E}_{P_\states}[g(X_u)(f(X_v) - (\mu_{P_\states} - \epsilon'))]\\ = \mathbb{E}_{P_\states}[g(X_u)f(X_v)] - \mu_{P_\states}\mathbb{E}_{P_\states}[g(X_u)] + \epsilon'\mathbb{E}_{P_\states}[g(X_u)] = \epsilon'\mathbb{E}_{P_\states}[g(X_u)]\,,
\end{multline*}
and since ${P_\states}\in\mathcal{P}\subseteq\mathbb{P}_{\rateset,\mathcal{M}}$,
\begin{equation*}
\epsilon'\mathbb{E}_{P_\states}[g(X_u)] \leq \epsilon'\uexp[g(X_u)]=\epsilon\,,
\end{equation*}
and so we find that
\begin{equation*}
\underline{\mathbb{E}}[g(X_u)(f(X_v) - \mu_*)] \leq \epsilon\,.
\end{equation*}
Since this is true for every $\epsilon>0$, and since we already know that $\underline{\mathbb{E}}[g(X_u)(f(X_v) - \mu_*)]$ is non-negative, we conclude that
\begin{equation*}
\underline{\mathbb{E}}[g(X_u)(f(X_v) - \mu_*)] = 0\,.
\end{equation*}

Now consider any $\mu' > \mu_*$. There must then be some ${P_\states}\in\mathcal{P}$ such that $\mu_*\leq \mu_{P_\states} < \mu'$, and furthermore,
\begin{equation*}
\underline{\mathbb{E}}[g(X_u)(f(X_v) - \mu')] \leq \mathbb{E}_{P_\states}[g(X_u)(f(X_v) - \mu')] < \mathbb{E}_{P_\states}[g(X_u)(f(X_v) - \mu_{P_\states})] = 0\,,
\end{equation*}
where the strict inequality follows from the fact that $\mathbb{E}_{P_\states}[g(X_u)(f(X_v) - \mu)]$ is strictly decreasing, and $\mu_{P_\states}<\mu'$. Since this is true for every $\mu'>\mu_*$, we conclude that
\begin{equation*}
\mu_* = \max\left\{ \mu\in\reals\,:\, \underline{\mathbb{E}}[g(X_u)(f(X_v) - \mu)] \geq 0 \right\}\,.
\end{equation*}
Hence, because of our definition for $\mu_*$, we are left to prove that
\begin{equation}\label{eq:addedequation}
\max\left\{ \mu\in\reals\,:\, \underline{\mathbb{E}}[g(X_u)(f(X_v) - \mu)] \geq 0 \right\}=\max\left\{ \mu\in\reals\,:\, \lexp[g(X_u)(f(X_v) - \mu)] \geq 0 \right\}.
\end{equation}
If $\mathcal{P}=\mathbb{P}_{\rateset,\mathcal{M}}$, this is trivially true.
Therefore, we can assume without loss of generality that $\mathcal{P}\neq \mathbb{P}_{\rateset,\mathcal{M}}$. Let $\mathcal{P}_0\coloneqq \mathbb{P}_{\rateset,\mathcal{M}}\setminus \mathcal{P}$. Due to the definition of $\mathcal{P}$, we then have for every $P_\states\in\mathcal{P}_0$ that $\mathbb{E}_{P_\states}[g(X_u)]=0$. It follows from Lemma~\ref{lemma:conditioning_zero_means_bayes_zero} that $\mathbb{E}_{P_\states}\bigl[g(X_u)\bigl(f(X_v)-\mu\bigr)\bigr]=0$ for all $\mu\in\reals$ and all $P_\states\in\mathcal{P}_0$. Hence,
\begin{equation*}
\inf\left\{ \mathbb{E}_{P_\states}\bigl[g(X_u)\bigl(f(X_v)-\mu\bigr)\bigr]\,:\,P_\states\in\mathcal{P}_0 \right\} = 0
\text{ for all $\mu\in\reals$.}
\end{equation*}
Because $\mathcal{P}\cup\mathcal{P}_0=\mathbb{P}_{\rateset,\mathcal{M}}$, we therefore have
\begin{equation*}
\lexp\bigl[g(X_u)\bigl(f(X_v)-\mu\bigr)\bigr] = \min\left\{\underline{\mathbb{E}}[g(X_u)(f(X_v) - \mu)],\,0 \right\}\,,
\end{equation*}
and so we conclude that $\lexp\bigl[g(X_u)\bigl(f(X_v)-\mu\bigr)\bigr]$ is negative if and only if $\underline{\mathbb{E}}[g(X_u)(f(X_v) - \mu)]$ is negative. This clearly implies Equation~\eqref{eq:addedequation}.
\end{proof}

\begin{corollary}\label{cor:lower_hidden_is_root_chain}
Let $\mathcal{Z}$ be an ICTHMC, and consider any $u,v\in\mathcal{U}$, $f\in\gambles(\states_v)$ and $g\in\gambles(\states_u)$ such that $g\geq 0$. Then, if $\uexp[g(X_u)]>0$, it holds that
\begin{multline*}
\max\left\{\mu\in\reals\,:\, \lexp\bigl[g(X_u)\bigl(f(X_v) - \mu\bigr)\bigr] \geq 0\right\}\\ = \inf\left\{ \frac{\mathbb{E}_{P_\states}[f(X_v)g(X_u)]}{\mathbb{E}_{P_\states}[g(X_u)]}\,:\,P\in\mathcal{Z}, \mathbb{E}_{P_\states}[g(X_u)]>0 \right\}\,.
\end{multline*}
\end{corollary}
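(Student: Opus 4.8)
The plan is to reduce this statement directly to Lemma~\ref{lemma:general_regular_extension}, which already establishes exactly the same equality but with the ICTMC $\mathbb{P}_{\rateset,\mathcal{M}}$ in place of the ICTHMC $\mathcal{Z}$. The point is that every quantity appearing in the corollary---the left-hand side $\max\{\mu\in\reals:\lexp[g(X_u)(f(X_v)-\mu)]\geq 0\}$, the objective $\mathbb{E}_{P_\states}[f(X_v)g(X_u)]/\mathbb{E}_{P_\states}[g(X_u)]$, the constraint $\mathbb{E}_{P_\states}[g(X_u)]>0$, and the side condition $\uexp[g(X_u)]>0$---involves only functions on the state space and only the \emph{marginal} state process $P_\states$. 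So nothing is really added by passing from $\mathbb{P}_{\rateset,\mathcal{M}}$ to $\mathcal{Z}$.

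Concretely, I would proceed as follows. First, observe that the left-hand side is literally the left-hand side of Lemma~\ref{lemma:general_regular_extension}, and that the assumption $\uexp[g(X_u)]>0$ is exactly that lemma's hypothesis; both depend only on $\mathbb{P}_{\rateset,\mathcal{M}}$, since $\lexp=\underline{\mathbb{E}}_{\rateset,\mathcal{M}}$ and $\uexp=\overline{\mathbb{E}}_{\rateset,\mathcal{M}}$ act on functions of the states alone. Second, for the right-hand side: by Definition~\ref{def:hidden_ictmc}, every $P\in\mathcal{Z}$ is of the form $P=P_{\observs\vert\states}\otimes P_\states$ for some $P_\states\in\mathbb{P}_{\rateset,\mathcal{M}}$, and conversely this map from $\mathbb{P}_{\rateset,\mathcal{M}}$ into $\mathcal{Z}$ is onto. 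Moreover, by the construction in Section~\ref{sec:aug_stochastic_processes}, taking $O_w=\observs_w$ gives $P_{\observs\vert\states}(\observs_w\vert x_w)=\prod_{t\in w}P_{\observs\vert\states}(\observs_t\vert x_t)=1$, so $P(X_w=x_w)=P_\states(X_w=x_w)$ for every $w\in\mathcal{U}$; hence $\mathbb{E}_{P_\states}[f(X_v)g(X_u)]$ and $\mathbb{E}_{P_\states}[g(X_u)]$ are determined by $P$ and depend only on its underlying $P_\states$. Consequently the set of admissible values $\{\mathbb{E}_{P_\states}[f(X_v)g(X_u)]/\mathbb{E}_{P_\states}[g(X_u)] : P\in\mathcal{Z},\,\mathbb{E}_{P_\states}[g(X_u)]>0\}$ coincides with $\{\mathbb{E}_{P_\states}[f(X_v)g(X_u)]/\mathbb{E}_{P_\states}[g(X_u)] : P_\states\in\mathbb{P}_{\rateset,\mathcal{M}},\,\mathbb{E}_{P_\states}[g(X_u)]>0\}$, so the two infima are equal. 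Applying Lemma~\ref{lemma:general_regular_extension} then closes the argument.

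I do not expect a genuine obstacle here; the entire content is the bookkeeping observation that the infimum's objective and constraint ``factor through'' the marginal state process, and that $P_\states\mapsto P_{\observs\vert\states}\otimes P_\states$ is surjective onto $\mathcal{Z}$. The only point to state with a little care is precisely this factorisation---i.e., that marginalising out the output variables $Y_w$ from an augmented process returns the original $P_\states$---after which surjectivity is immediate from Definition~\ref{def:hidden_ictmc} and the result follows by invoking Lemma~\ref{lemma:general_regular_extension} verbatim. (Note that injectivity of the correspondence is not even needed, only surjectivity, so no additional argument about distinct $P_\states$ yielding distinct augmented processes is required.)
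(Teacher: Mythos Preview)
Your proposal is correct and matches the paper's own proof, which simply reads ``Trivial consequence of Lemma~\ref{lemma:general_regular_extension} and Definition~\ref{def:hidden_ictmc}.'' You have spelled out the bookkeeping behind that triviality---namely that the surjection $P_\states\mapsto P_{\observs\vert\states}\otimes P_\states$ from $\mathbb{P}_{\rateset,\mathcal{M}}$ onto $\mathcal{Z}$ leaves the objective, constraint, and side condition unchanged---but the approach is identical.
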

\begin{proof}
Trivial consequence of Lemma~\ref{lemma:general_regular_extension} and Definition~\ref{def:hidden_ictmc}.
\end{proof}

Due to the independence assumptions on augmented stochastic processes and the fact that we use a fixed output distribution $P_{\observs\vert\states}$, (lower) probabilities of outputs can be conveniently rewritten as follows.
\begin{lemma}\label{lemma:output_probability_is_expectation}
Let $P=P_{\observs\vert\states}\otimes P_\states$ be an augmented stochastic process and consider any $u\in\mathcal{U}$ and any $O_u\in\Sigma_u$. Then $P(Y_u\in O_u)=\mathbb{E}_{P_\states}[P_{\observs\vert\states}(O_u\vert X_u)]$.
\end{lemma}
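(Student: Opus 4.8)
The plan is to unwind the construction of the augmented process $P = P_{\observs\vert\states}\otimes P_\states$ from Section~\ref{sec:aug_stochastic_processes} and then apply the law of total probability over the finite joint state space $\states_u$. Concretely, I would first fix any $v\in\mathcal{U}$ with $u\cap v=\emptyset$ and invoke the defining identity
\[
P(Y_u\in O_u,X_u=x_u,X_v=x_v) = P_{\observs\vert\states}(O_u\,\vert\,x_u)\,P_\states(X_u=x_u,X_v=x_v)
\]
for every $x_u\in\states_u$ and $x_v\in\states_v$. Marginalising out $x_v$ (a finite sum, since $\states$ is finite) and using $\sum_{x_v\in\states_v}P_\states(X_u=x_u,X_v=x_v)=P_\states(X_u=x_u)$, this yields
\[
P(Y_u\in O_u,X_u=x_u) = P_{\observs\vert\states}(O_u\,\vert\,x_u)\,P_\states(X_u=x_u)\,.
\]

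Next I would sum this last identity over all $x_u\in\states_u$; again the sum is finite, so no additional (countable) additivity is needed. By the law of total probability the left-hand side sums to $P(Y_u\in O_u)$, so
\[
P(Y_u\in O_u)=\sum_{x_u\in\states_u}P_{\observs\vert\states}(O_u\,\vert\,x_u)\,P_\states(X_u=x_u)=\mathbb{E}_{P_\states}\bigl[P_{\observs\vert\states}(O_u\,\vert\,X_u)\bigr]\,,
\]
where the final equality is merely the definition of the expectation under $P_\states$ of the function $x_u\mapsto P_{\observs\vert\states}(O_u\,\vert\,x_u)$, which lies in $\gambles(\states_u)$ by the shorthand $P_{\observs\vert\states}(O_u\,\vert\,x_u)=\prod_{t\in u}P_{\observs\vert\states}(O_t\,\vert\,x_t)$ introduced in Section~\ref{sec:observs}.

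I do not expect any real obstacle here: the statement is a direct bookkeeping consequence of the conditional-independence construction of the augmented process together with finiteness of $\states_u$. The only point deserving a sentence of care is confirming that $x_u\mapsto P_{\observs\vert\states}(O_u\,\vert\,x_u)$ is a well-defined element of $\gambles(\states_u)$, so that the right-hand side $\mathbb{E}_{P_\states}[P_{\observs\vert\states}(O_u\,\vert\,X_u)]$ is meaningful; this is immediate from the definition of the output model and the product notation for $\Sigma_u$.
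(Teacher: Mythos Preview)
Your proposal is correct and follows essentially the same approach as the paper's own proof: apply the law of total probability over the finite joint state space $\states_u$, use the defining factorisation of the augmented process to obtain $P(Y_u\in O_u,X_u=x_u)=P_{\observs\vert\states}(O_u\,\vert\,x_u)P_\states(X_u=x_u)$, and recognise the resulting sum as $\mathbb{E}_{P_\states}[P_{\observs\vert\states}(O_u\,\vert\,X_u)]$. The only cosmetic difference is that you first pass through an auxiliary $v\in\mathcal{U}$ and marginalise it out, whereas the paper invokes the marginalised identity directly (relying on the remark in Section~\ref{sec:aug_stochastic_processes} that ``other probabilities can be derived by appropriate marginalisation'').
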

\begin{proof}
Due to the construction of augmented stochastic processes in Section~\ref{sec:aug_stochastic_processes}, we have that
\begin{equation*}
P(Y_u\in O_u) = \sum_{x_u\in\states_u} P(Y_u\in O_u, X_u=x_u) 
 = \sum_{x_u\in\states_u} P_{\observs\vert\states}(O_u\vert x_u)P_\states(X_u=x_u) 
 = \mathbb{E}_{P_\states}[P_{\observs\vert\states}(O_u\vert X_u)].
\end{equation*}
\end{proof}
\begin{lemma}\label{lemma:lower_output_probability_is_expectation}
Let $\mathcal{Z}$ be an ICTHMC with corresponding ICTMC $\mathbb{P}_{\rateset,\mathcal{M}}$ and consider any $u\in\mathcal{U}$ and $O_u\in\Sigma_u$. Then $\overline{P}_{\mathcal{Z}}(Y_u\in O_u)=\uexp[P_{\observs\vert\states}(O_u\vert X_u)]$.
\end{lemma}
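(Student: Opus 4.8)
The plan is to chain together the conjugacy definition of the upper probability, the bijection between $\mathcal{Z}$ and $\mathbb{P}_{\rateset,\mathcal{M}}$ provided by Definition~\ref{def:hidden_ictmc}, and the pointwise identity for a single precise process that is Lemma~\ref{lemma:output_probability_is_expectation}. None of these steps is delicate; the only thing to be careful about is bookkeeping with suprema versus infima and the conjugacy relation $\uexp[\cdot\,\vert\,\cdot]=-\lexp[-\cdot\,\vert\,\cdot]$.

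Concretely, I would first unfold the left-hand side. By the conjugacy convention used for upper probabilities (analogous to the ICTMC case recalled in Section~\ref{subsec:ictmc}, now applied to the set $\mathcal{Z}$), we have $\overline{P}_{\mathcal{Z}}(Y_u\in O_u)=\sup\{P(Y_u\in O_u)\,:\,P\in\mathcal{Z}\}$. Next, I would invoke Definition~\ref{def:hidden_ictmc}, which says that every $P\in\mathcal{Z}$ is of the form $P=P_{\observs\vert\states}\otimes P_\states$ for some $P_\states\in\mathbb{P}_{\rateset,\mathcal{M}}$, and conversely; hence the supremum over $P\in\mathcal{Z}$ is the same as the supremum over $P_\states\in\mathbb{P}_{\rateset,\mathcal{M}}$ of $(P_{\observs\vert\states}\otimes P_\states)(Y_u\in O_u)$.

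Then I would apply Lemma~\ref{lemma:output_probability_is_expectation} to each augmented process individually, giving $(P_{\observs\vert\states}\otimes P_\states)(Y_u\in O_u)=\mathbb{E}_{P_\states}[P_{\observs\vert\states}(O_u\vert X_u)]$, where $P_{\observs\vert\states}(O_u\vert X_u)$ is regarded as a (nonnegative) gamble in $\gambles(\states_u)$. Substituting, $\overline{P}_{\mathcal{Z}}(Y_u\in O_u)=\sup\{\mathbb{E}_{P_\states}[P_{\observs\vert\states}(O_u\vert X_u)]\,:\,P_\states\in\mathbb{P}_{\rateset,\mathcal{M}}\}$. Finally, by the definition of $\lexp$ as an infimum of precise expectations over $\mathbb{P}_{\rateset,\mathcal{M}}$ together with conjugacy, this supremum is exactly $\uexp[P_{\observs\vert\states}(O_u\vert X_u)]$, which is the claimed right-hand side.

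The ``main obstacle'' here is essentially non-existent: the result is a routine consequence of definitions plus Lemma~\ref{lemma:output_probability_is_expectation}. If anything, the only point worth stating explicitly is that $P_{\observs\vert\states}(O_u\vert X_u)=\prod_{t\in u}P_{\observs\vert\states}(O_t\vert X_t)$ is indeed a well-defined element of $\gambles(\states_u)$ (so that $\uexp$ applies to it), which follows directly from the finite additivity of each $P_{\observs\vert\states}(\cdot\vert x)$ and finiteness of $\states$; this is the kind of thing one notes in passing rather than proves.
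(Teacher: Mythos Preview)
Your proposal is correct and follows exactly the same route as the paper, which simply states that the result is a trivial consequence of Definition~\ref{def:hidden_ictmc} and Lemma~\ref{lemma:output_probability_is_expectation}. Your write-up just makes the suprema/conjugacy bookkeeping explicit where the paper leaves it implicit.
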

\begin{proof}
Trivial consequence of Definition~\ref{def:hidden_ictmc} and Lemma~\ref{lemma:output_probability_is_expectation}.
\end{proof}

\section{Proofs of the Results in Section~\ref{subsec:pos_prob}}

\begin{proof}{\bf of Proposition~\ref{prop:precise_conditioning_for_positive}~}
The result follows from some simple manipulations, but we need to keep track of the overlap $u\cap v$ between the time-points of interest to prevent double-counting those time-points. Let $w\coloneqq u\setminus v$; then clearly $v\cup w = v\cup (u\setminus v) = u\cup (v\setminus u)=u\cup v$.

Starting from the definition of $\mathbb{E}_P[f(X_v)\,\vert\,Y_u\in O_u]$ using Bayes' rule, we find
\begin{align*}
\mathbb{E}_P[f(X_v)\,\vert\,Y_u\in O_u] &\coloneqq \sum_{x_v\in\states_v} f(x_v)\frac{P(X_v=x_v, Y_u\in O_u)}{P(Y_u\in O_u)} \\
&= \sum_{x_v\in\states_v} \sum_{x_w\in\states_w} f(x_v)\frac{P(X_v=x_v, X_w=x_w, Y_u\in O_u)}{P(Y_u\in O_u)} \\
&= \sum_{x_u\in\states_u}\sum_{x_{v\setminus u}\in\states_{v\setminus u}} f(x_v)\frac{P(X_u=x_u, X_{v\setminus u}=x_{v\setminus u}, Y_u\in O_u)}{P(Y_u\in O_u)} \\
&= \sum_{x_u\in\states_u}\sum_{x_{v\setminus u}\in\states_{v\setminus u}} f(x_v)\frac{P_{\observs\vert\states}(O_u\vert x_u)P_\states(X_u=x_u, X_{v\setminus u}=x_{v\setminus u})}{P(Y_u\in O_u)}\,,
\end{align*}
where the first equality is by definition, the second is by the basic rules of probability, the third is by changing the indexing using $v\cup w=u\cup(v\setminus u)$, and the fourth is by the definition of augmented stochastic processes in Section~\ref{sec:aug_stochastic_processes}. Joining the indexes using $u\cup(v\setminus u)=u\cup v$, we find that
\begin{align*}
\mathbb{E}_P[f(X_v)\,\vert\,Y_u\in O_u] &= \sum_{x_u\in\states_u}\sum_{x_{v\setminus u}\in\states_{v\setminus u}} f(x_v)\frac{P_{\observs\vert\states}(O_u\vert x_u)P_\states(X_u=x_u, X_{v\setminus u}=x_{v\setminus u})}{P(Y_u\in O_u)} \\
 &= \sum_{x_{u\cup v}\in\states_{u\cup v}} f(x_v)\frac{P_{\observs\vert\states}(O_u\vert x_u)P_\states(X_{u\cup v}=x_{u\cup v})}{P(Y_u\in O_u)} \\
 &= \frac{\mathbb{E}_{P_\states}[f(X_v)P_{\observs\vert\states}(O_u\vert X_u)]}{P(Y_u\in O_u)}\,,
\end{align*}
where the third equality is by the definition of expectation. Applying Lemma~\ref{lemma:output_probability_is_expectation} to the denominator, we finally obtain
\begin{equation*}
\mathbb{E}_P[f(X_v)\,\vert\,Y_u\in O_u] = \frac{\mathbb{E}_{P_\states}[f(X_v)P_{\observs\vert\states}(O_u\vert X_u)]}{\mathbb{E}_{P_\states}[P_{\observs\vert\states}(O_u\vert X_u)]}\,.
\end{equation*}
\end{proof}

\begin{proof}{\bf of Proposition~\ref{prop:GBR_regular}~}
This is a special case of Corollary~\ref{cor:lower_hidden_is_root_chain}. To see this, set $g(X_u)\coloneqq P_{\observs\vert\states}(O_u\vert X_u)$ in that corollary's statement, apply Proposition~\ref{prop:precise_conditioning_for_positive} to the quantities $\mathbb{E}_P[f(X_v)\,\vert\,Y_u\in O_u]$ and apply Lemma~\ref{lemma:output_probability_is_expectation} to the quantities $P(Y_u\in O_u)$.

The hypothesis $\overline{P}_\mathcal{Z}(Y_u\in O_u)>0$ then implies the hypothesis $\uexp[g(X_u)]>0$ of Corollary~\ref{cor:lower_hidden_is_root_chain}, due to Lemma~\ref{lemma:lower_output_probability_is_expectation}.
\end{proof}

\section{Proofs of the Results in Section~\ref{subsec:uncountable}}

\begin{proof}{\bf of Proposition~\ref{prop:precise_bayes_rule_densities}~}
Assume that there is a sequence $\{\lambda_i\}_{i\in\nats}$ in $\realspos$ such that, for all $x_u\in\states_u$,
\begin{equation*}
\phi_u(y_u\vert x_u) \coloneqq \lim_{i\to+\infty}\frac{P_{\observs\vert\states}(O_u\vert x_u)}{\lambda_i}
\end{equation*}
exists, is real-valued, and satisfies $\mathbb{E}_{P_\states}[\phi_u(y_u\vert X_u)]>0$. 

Then, the existence of $\phi_u(y_u\vert X_u)$ clearly implies that
\begin{equation*}
\lim_{i\to+\infty} \frac{f(X_v)P_{\observs\vert\states}(O_u^i\vert X_u)}{\lambda_i} = f(X_v)\phi_u(y_u\vert X_u)\,,
\end{equation*}
and so, using Lemma~\ref{lemma:limit_exp_is_exp_limit}, we find that
\begin{equation*}
\lim_{i\to+\infty}\frac{\mathbb{E}_{P_\states}[f(X_v)P_{\observs\vert\states}(O_u^i\vert X_u)]}{\lambda_i} = \mathbb{E}_{P_\states}[f(X_v)\phi_u(y_u\vert X_u)]
\end{equation*}
exists, and similarly that
\begin{equation*}
\lim_{i\to+\infty}\frac{\mathbb{E}_{P_\states}[P_{\observs\vert\states}(O_u^i\vert X_u)]}{\lambda_i} = \mathbb{E}_{P_\states}[\phi_u(y_u\vert X_u)] > 0\,.
\end{equation*}
Furthermore, this latter inequality implies that there is some $n\in\nats$ such that for all $i>n$,
\begin{equation*}
\frac{\mathbb{E}_{P_\states}[P_{\observs\vert\states}(O_u^i\vert X_u)]}{\lambda_i} > 0\,,
\end{equation*}
and hence in particular $\mathbb{E}_{P_\states}[P_{\observs\vert\states}(O_u^i\vert X_u)]>0$ for all $i>n$. Furthermore, since by construction $O_u^j\supseteq O_u^{j+1}$ for all $j\in\nats$, monotonicity of the measure $P_{\observs\vert\states}$ implies that $P_{\observs\vert\states}(O_u^j\vert x_u) \geq P_{\observs\vert\states}(O_u^{j+1}\vert x_u)$ for all $j\in\nats$ and all $x_u\in\states_u$. So, for all $j\leq n$ we also have $\mathbb{E}_{P_\states}[P_{\observs\vert\states}(O_u^j\vert X_u)]>0$, and so we have found that $\mathbb{E}_{P_\states}[P_{\observs\vert\states}(O_u^i\vert X_u)]>0$ for all $i\in\nats$. Due to Proposition~\ref{prop:precise_conditioning_for_positive}, this implies that each $\mathbb{E}_P[f(X_v)\,\vert\,Y_u\in O_u^i]$ is well-defined.

By the limit definition of $\mathbb{E}_P[f(X_v)\,\vert\,Y_u=y_u]$, and applying Proposition~\ref{prop:precise_conditioning_for_positive} to each step,
\begin{align*}
\mathbb{E}_P[f(X_v)\,\vert\,Y_u=y_u] &= \lim_{i\to+\infty} \mathbb{E}_P[f(X_v)\,\vert\,Y_u\in O_u^i] \\
 &= \lim_{i\to+\infty} \frac{\mathbb{E}_{P_\states}[f(X_v)P_{\observs\vert\states}(O_u^i\vert X_u)]}{\mathbb{E}_{P_\states}[P_{\observs\vert\states}(O_u^i\vert X_u)]} \\
 &= \lim_{i\to+\infty} \frac{\lambda_i}{\lambda_i}\frac{\mathbb{E}_{P_\states}[f(X_v)P_{\observs\vert\states}(O_u^i\vert X_u)]}{\mathbb{E}_{P_\states}[P_{\observs\vert\states}(O_u^i\vert X_u)]} \\
 &= \lim_{i\to+\infty} \frac{\nicefrac{\mathbb{E}_{P_\states}[f(X_v)P_{\observs\vert\states}(O_u^i\vert X_u)]}{\lambda_i}}{\nicefrac{\mathbb{E}_{P_\states}[P_{\observs\vert\states}(O_u^i\vert X_u)]}{\lambda_i}} \\
 &= \frac{\lim_{i\to+\infty} \nicefrac{\mathbb{E}_{P_\states}[f(X_v)P_{\observs\vert\states}(O_u^i\vert X_u)]}{\lambda_i}}{\lim_{i\to+\infty} \nicefrac{\mathbb{E}_{P_\states}[P_{\observs\vert\states}(O_u^i\vert X_u)]}{\lambda_i}}= \frac{\mathbb{E}_{P_\states}[f(X_v)\phi_u(y_u\vert X_u)]}{\mathbb{E}_{P_\states}[\phi_u(y_u\vert X_u)]},
\end{align*}
using the above established existence and properties of the individual limits for the penultimate step.

\end{proof}

The below proves some of the properties that are claimed in the main text of Section~\ref{subsec:uncountable}. We first need the following result, which is essentially well-known, but which we repeat here for the sake of completeness.

\begin{lemma}\label{lemma:lebesgue_differentiation_theorem}
Fix $d\in\nats$ and consider any absolutely integrable function $\psi:\reals^d\to \reals$. Then, for any $y\in\reals^d$ and any sequence $\{B_i\}_{i\in\nats}$ of open balls that are centred on, and shrink to, $y$, if $\psi$ is continuous at $y$ it holds that
\begin{equation*}
\psi(y) = \lim_{i\to+\infty} \frac{1}{\lambda(B_i)}\int_{B_i}\psi(\gamma)\,\mathrm{d}\gamma\,,
\end{equation*}
where the integral is understood in the Lebesgue sense, and where $\lambda(B_i)$ denotes the Lebesgue measure of $B_i$.
\end{lemma}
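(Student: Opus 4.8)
The plan is to prove this by a direct $\epsilon$--$\delta$ argument that uses continuity of $\psi$ at $y$ to replace the averaged integral by $\psi(y)$ up to an arbitrarily small error; this is the classical ``continuous-function'' special case of the Lebesgue differentiation theorem, and nothing beyond elementary properties of the Lebesgue integral is needed. First I would record that each $B_i$, being an open ball, is non-empty and hence satisfies $\lambda(B_i) > 0$, so that the averages $\tfrac{1}{\lambda(B_i)}\int_{B_i}\psi(\gamma)\,\mathrm{d}\gamma$ are well-defined; absolute integrability of $\psi$ then guarantees that each such integral is a finite real number.

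Next I would fix an arbitrary $\epsilon \in \realspos$ and invoke continuity of $\psi$ at $y$ to obtain a $\delta \in \realspos$ such that $\abs{\psi(\gamma) - \psi(y)} < \epsilon$ whenever $\norm{\gamma - y} < \delta$. Since the balls $\{B_i\}_{i\in\nats}$ are centred on $y$ and ``shrink to'' $y$ --- which I read as their radii tending to $0$ --- there is an $N \in \nats$ such that $B_i$ is contained in the open $\delta$-ball around $y$ for all $i > N$. For any such $i$, using that $\psi(y)$ is constant on $B_i$, so that $\tfrac{1}{\lambda(B_i)}\int_{B_i}\psi(y)\,\mathrm{d}\gamma = \psi(y)$, together with the triangle inequality for integrals, the estimate is
\begin{align*}
\abs{ \frac{1}{\lambda(B_i)}\int_{B_i}\psi(\gamma)\,\mathrm{d}\gamma - \psi(y) }
&= \abs{ \frac{1}{\lambda(B_i)}\int_{B_i}\bigl(\psi(\gamma) - \psi(y)\bigr)\,\mathrm{d}\gamma } \\
&\leq \frac{1}{\lambda(B_i)}\int_{B_i}\abs{\psi(\gamma) - \psi(y)}\,\mathrm{d}\gamma < \epsilon\,.
\end{align*}
Since $\epsilon \in \realspos$ was arbitrary, this gives $\lim_{i\to+\infty}\tfrac{1}{\lambda(B_i)}\int_{B_i}\psi(\gamma)\,\mathrm{d}\gamma = \psi(y)$, which is the claim.

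I do not expect any genuine obstacle here: the argument is purely local, so the integrability hypothesis plays only the minor role of ensuring finiteness of the integrals over the $B_i$. The only points deserving a little care are verifying $\lambda(B_i) > 0$ so that the averages are meaningful, and correctly reading ``shrink to $y$'' so that the eventual containment $B_i \subseteq B(y,\delta)$ is justified; neither of these is difficult.
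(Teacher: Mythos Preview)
Your proposal is correct and follows essentially the same approach as the paper's proof: both fix $\epsilon>0$, use continuity of $\psi$ at $y$ to control $\abs{\psi(\gamma)-\psi(y)}$ on a small ball, use the shrinking of the $B_i$ to eventually land inside that ball, and then bound the average by the same chain of inequalities. Your version adds the helpful remarks that $\lambda(B_i)>0$ and that the integrals are finite, which the paper leaves implicit.
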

\begin{proof}
Fix any $\epsilon>0$. We need to show that there is some $n\in\nats$ such that, for all $i>n$, it holds that
\begin{equation*}
\abs{\psi(y) - \frac{1}{\lambda(B_i)}\int_{B_i}\psi(\gamma)\,\mathrm{d}\gamma} < \epsilon\,.
\end{equation*}
Now, because $\psi$ is continuous at $y$, there is some open ball $B$ that is centred on $y$, such that for all $\gamma\in B$, it holds that
\begin{equation*}
\abs{\psi(y)-\psi(\gamma)} \leq \epsilon\,.
\end{equation*}
Furthermore, because the sequence $\{B_i\}_{i\in\nats}$ is centred on, and shrinks to, $y$, there must be some $n\in\nats$ such that for all $i>n$, it holds that $B_i\subset B$. Fix any such $i>n$. Then,
\begin{align*}
\abs{\frac{1}{\lambda(B_i)}\int_{B_i}\psi(\gamma)\,\mathrm{d}\gamma - \psi(y)} &= \abs{\frac{1}{\lambda(B_i)}\int_{B_i}\psi(\gamma) - \psi(y)\,\mathrm{d}\gamma} \\
 &\leq \frac{1}{\lambda(B_i)}\int_{B_i}\abs{\psi(\gamma) - \psi(y)}\,\mathrm{d}\gamma \\
 &\leq \frac{1}{\lambda(B_i)}\int_{B_i}\epsilon\,\mathrm{d}\gamma
 = \frac{\epsilon}{\lambda(B_i)}\int_{B_i}1\,\mathrm{d}\gamma
 = \frac{\epsilon}{\lambda(B_i)}\lambda(B_i)=\epsilon.
\end{align*}
\end{proof}

\begin{proof}{\bf of claims in Section~\ref{subsec:uncountable}~}
We start by proving the claim that $\phi_u(y_u\vert x_u)$ exists and is real-valued if it can be constructed ``piecewise'', as explained in the main text. So, fix any $y_u\in\observs_u$, choose any sequence $\{O_u^i\}_{i\in\nats}$ in $\Sigma_u$  that shrinks to $y_u$, and suppose that for every $t\in u$, there is a sequence $\{\lambda_{\,t,i}\}_{i\in\nats}$ in $\realspos$ such that, for all $x_t\in\states_t$,
\begin{equation*}
\phi_t(y_t\vert x_t) \coloneqq \lim_{i\to+\infty} \frac{P_{\observs\vert\states}(O_t^i\vert x_t)}{\lambda_{\,t,i}}
\end{equation*}
exists and is real-valued. Recall that, for every $x_u\in\states_u$ and every $i\in\nats$, we have $P_{\observs\vert\states}(O_u^i\vert x_u)=\prod_{t\in u}P_{\observs\vert\states}(O_t^i\vert x_t)$. So, by choosing $\{\lambda_i\}_{i\in\nats}$ as $\lambda_i\coloneqq \prod_{t\in u}\lambda_{\,t,i}$, it follows that for every $x_u\in\states_u$,
\begin{equation*}
\phi_u(y_u\vert x_u) = \lim_{i\to+\infty} \frac{P_{\observs\vert\states}(O_u^i\vert x_u)}{\lambda_i} = \lim_{i\to+\infty} \prod_{t\in u}\frac{P_{\observs\vert\states}(O_t^i\vert x_t)}{\lambda_{\,t,i}} = \prod_{t\in u}\lim_{i\to+\infty} \frac{P_{\observs\vert\states}(O_t^i\vert x_t)}{\lambda_{\,t,i}} = \prod_{t\in u} \phi_t(y_t\vert x_t)\,,
\end{equation*}
using the existence of the $\phi_t(y_t\vert x_t)$ for the third equality. Hence, $\phi_u(y_u\vert x_u)$ exists and, since each $\phi_t(y_t\vert x_t)$, $t\in u$, is real-valued, so is $\phi_u(y_u\vert x_u)$. This concludes the proof of this statement.

Next, we prove that the limit expression, and in particular the second equality, in Equation~\eqref{eq:density_is_limit} are true when $\psi(\cdot\vert x_t)$ is continuous (at $y_t$). To this end, note that $P_{\observs\vert\states}(\cdot\,\vert x_t)$ was defined by
\begin{equation*}
P_{\observs\vert\states}(O\,\vert x_t) \coloneqq \int_O \psi(y\vert x_t)\,\mathrm{d}y\,,
\end{equation*}
for all $O\in\Sigma$. For a sequence of open balls $\{O_t^i\}_{i\in\nats}$ that are centred on, and shrink to, $y_t$, we therefore need to prove that
\begin{equation*}
\psi(y_t\vert x_t) = \lim_{i\to+\infty}\frac{P_{\observs\vert\states}(O_t^i\vert x_t)}{\lambda(O_t^i)} = \lim_{i\to+\infty}\frac{1}{\lambda(O_t^i)}\int_{O_t^i}\psi(y\vert x_t)\,\mathrm{d} y\,.
\end{equation*}
Because $\psi(y_t\vert x_t)$ is by assumption continuous at $y_t$, this result follows immediately from Lemma~\ref{lemma:lebesgue_differentiation_theorem}.

We finally prove the claim that $\mathbb{E}_P[f(X_v)\,\vert\,Y_u=y_u]$ is the same for almost every sequence $\{O_u^i\}_{i\in\nats}$ that shrinks to $y_u$, provided that for all $x_t\in\states$, $P_{\observs\vert\states}(\cdot\vert x_t)$ is constructed from a density $\psi(\cdot\,\vert x_t)$ that is continuous and strictly positive at $y_t$. To this end, assume that $\{O_u^i\}_{i\in\nats}$ satisfies the assumptions in Footnote~\ref{fnote:limit_required_properties}; i.e. that for all $t\in u$, there is a sequence of open balls $\{B_t^i\}_{i\in\nats}$ in $\observs$ that shrinks to $y_t$ such that, for all $i\in\nats$, $\lambda(O_t^i)>0$ and $O_t^i\subseteq B_t^i$.

We start by showing that
\begin{equation*}
\phi_u(y_u\vert x_u) = \lim_{i\to+\infty} \frac{P_{\observs\vert\states}(O_u^i\vert x_u)}{\lambda(O_u^i)}
\end{equation*}
is independent of the sequence $\{O_u^i\}_{i\in\nats}$. We will prove this ``piecewise''. In particular, we will show that for all $t\in u$,
\begin{equation*}
\psi(y_t\vert x_t) = \lim_{i\to+\infty} \frac{P_{\observs\vert\states}(O_t^i\vert x_t)}{\lambda(O_t^i)}\,,
\end{equation*}
where we use the assumption $\lambda(O_t^i)>0$ to ensure that each element of the sequence is well-defined.

So, consider any $t\in u$. Note that we have $P_{\observs\vert\states}(O_t^i\vert x_t)\coloneqq \int_{O_t^i}\psi(y\vert x_t)\,\mathrm{d}y$ by definition, and choose any $\epsilon\in\realspos$. Because $\psi(\cdot\vert x_t)$ is continuous at $y_t$, there is some open ball $B_*\in\observs$ that is centred on $y_t$, and such that for all $y\in B_*$,
\begin{equation*}
\abs{\psi(y_t\vert x_t) - \psi(y\vert x_t)} \leq \epsilon\,.
\end{equation*}
Furthermore, because the sequence $\{B_t^i\}_{i\in\nats}$ shrinks to $y_t$, there is some $n\in\nats$ such that, for all $i>n$, it holds that $B_t^i\subseteq B_*$. Furthermore, because each $O_t^i\subseteq B_t^i$, also clearly $O_t^i\subseteq B_*$ for all $i>n$. Now consider any $i>n$. Then, 
\begin{align*}
\abs{\psi(y_t\vert x_t) - \frac{P_{\observs\vert\states}(O_t^i\vert x_t)}{\lambda(O_t^i)}} &= \abs{\psi(y_t\vert x_t) - \frac{1}{\lambda(O_t^i)}\int_{O_t^i}\psi(y\vert x_t)\,\mathrm{d}y} \\
 &\leq \frac{1}{\lambda(O_t^i)}\int_{O_t^i}\abs{\psi(y\vert x_t) - \psi(y_t\vert x_t)}\,\mathrm{d}y \\
 &\leq \frac{1}{\lambda(O_t^i)}\int_{O_t^i}\epsilon\,\mathrm{d}y
 = \frac{\epsilon}{\lambda(O_t^i)}\int_{O_t^i}1\,\mathrm{d}y 
 = \frac{\epsilon}{\lambda(O_t^i)}\lambda(O_t^i)
 = \epsilon\,.
\end{align*}
So, we conclude that indeed
\begin{equation*}
\psi(y_t\vert x_t) = \lim_{i\to+\infty}\frac{P_{\observs\vert\states}(O_t^i\vert x_t)}{\lambda(O_t^i)}\,,
\end{equation*}
as claimed. Because this holds for all $t\in u$, it follows from what we discussed above that $\phi_u(y_u\vert x_u)$ can be constructed ``piecewise'', that is,
\begin{equation*}
\phi_u(y_u\vert x_u) = \lim_{i\to+\infty}\frac{P_{\observs\vert\states}(O_u^i\vert x_u)}{\lambda(O_u^i)} = \lim_{i\to+\infty}\prod_{t\in u}\frac{P_{\observs\vert\states}(O_t^i\vert x_t)}{\lambda(O_t^i)} = \prod_{t\in u}\psi(y_t\vert x_t)\,,
\end{equation*}
which implies that $\phi_u(y_u\vert x_u)$ exists and is the same for every sequence $\{O_u^i\}_{i\in\nats}$ for which the assumed properties hold. Furthermore, since by assumption each $\psi(y_t\vert x_t)>0$, we clearly also have $\phi_u(y_u\vert x_u)>0$ for all $x_u\in\states_u$, and therefore in particular that $\mathbb{E}_{P_\states}[\phi_u(y_u\vert X_u)]>0$. 

It now follows from Proposition~\ref{prop:precise_bayes_rule_densities} that $\mathbb{E}_P[f(X_v)\,\vert\,Y_u=y_u]$ exists and is the same for every sequence for which the assumed properties hold.
\end{proof}

\begin{proof}{\bf of Proposition~\ref{prop:GBR_for_densities_lower_zero}~}
This is a special case of Corollary~\ref{cor:lower_hidden_is_root_chain}, obtained by setting $g(X_u)\coloneqq \phi_u(y_u\vert X_u)$ in that corollary's statement, and applying Proposition~\ref{prop:precise_bayes_rule_densities} to the quantities $\mathbb{E}_P[f(X_v)\,\vert\,Y_u=y_u]$.
\end{proof}

\begin{proof}{\bf of Proposition~\ref{prop:GBR_for_densities_is_limit_if_continuous}~}
Assume that there is a sequence $\{\lambda_i\}_{i\in\nats}$ such that $\phi_u(y_u\vert X_u)$ exists, is real-valued, and satisfies $\lexp[\phi_u(y_u\vert X_u)] >0$. 

Note that $\lexp[\phi_u(y_u\vert X_u)] >0$ implies that $\mathbb{E}_{P_\states}[\phi_u(y_u\vert X_u)]>0$ for all $P_\states\in\mathbb{P}_{\rateset,\mathcal{M}}$. Furthermore, for any $P_\states\in\mathbb{P}_{\rateset,\mathcal{M}}$, because by assumption $\lim_{i\to+\infty}\nicefrac{P_{\observs\vert\states}(O_u^i\vert X_u)}{\lambda_i}=\phi_u(y_u\vert X_u)$, it follows from Lemma~\ref{lemma:limit_exp_is_exp_limit} that
\begin{equation*}
\lim_{i\to+\infty}\frac{\mathbb{E}_{P_\states}[P_{\observs\vert\states}(O_u^i\vert X_u)]}{\lambda_i} = \mathbb{E}_{P_\states}[\phi_u(y_u\vert X_u)] > 0\,.
\end{equation*}
This implies that there is some $n\in\nats$ such that for all $j>n$,
\begin{equation*}
\frac{\mathbb{E}_{P_\states}[P_{\observs\vert\states}(O_u^j\vert X_u)]}{\lambda_j} > 0\,,
\end{equation*}
which implies that also $\mathbb{E}_{P_\states}[P_{\observs\vert\states}(O_u^j\vert X_u)] >0$. Furthermore, since $O_u^i\supseteq O_u^{i+1}$, we have that $P_{\observs\vert\states}(O_u^i\vert x_u)\geq P_{\observs\vert\states}(O_u^{i+1}\vert x_u)$ for all $x_u\in\states_u$, by monotonicity of measure. It follows that also for all $k\leq n$
\begin{equation*}
\mathbb{E}_{P_\states}[P_{\observs\vert\states}(O_u^k\vert X_u)] \geq \mathbb{E}_{P_\states}[P_{\observs\vert\states}(O_u^j\vert X_u)] > 0\,.
\end{equation*}
Hence, we have found that $\mathbb{E}_{P_\states}[P_{\observs\vert\states}(O_u^i\vert X_u)]>0$ for all $i\in\nats$. Since $P\in\mathbb{P}_{\rateset,\mathcal{M}}$, it now follows that $\uexp[P_{\observs\vert\states}(O_u^i\vert X_u)]=\overline{P}_{\mathcal{Z}}(Y_u\in O_u^i)>0$ for all $i\in\nats$ by Lemma~\ref{lemma:lower_output_probability_is_expectation}.

Now define the sequence $\{\mu_i\}_{i\in\nats}$ as $\mu_i\coloneqq \underline{\mathbb{E}}_\mathcal{Z}[f(X_v)\,\vert\,Y_u\in O_u^i]$, for all $i\in\nats$.
Fix any $i\in\nats$. Then, because $\overline{P}_{\mathcal{Z}}(Y_u\in O_u^i)>0$, it follows from Definition~\ref{def:reg_ext_pos} that $\mu_i\in[\min f, \max f]$ and furthermore, by Proposition~\ref{prop:GBR_regular}, that
\begin{equation*}
\lexp[P_{\observs\vert\states}(O_u^i\vert X_u)\bigl(f(X_v) - \mu_i\bigr)] = 0\,.
\end{equation*}
Therefore, and by the non-negative homogeneity of lower expectations, it also holds that
\begin{equation}\label{eq:nat_ext_limit:steps_are_roots}
\lexp\left[\frac{P_{\observs\vert\states}(O_u^i\vert X_u)}{\lambda_i}\bigl(f(X_v) - \mu_i\bigr)\right] = 0\,.
\end{equation}

Let now $\{\mu_{i_k}\}_{k\in\nats}$ be any convergent subsequence; since the sequence $\{\mu_i\}_{i\in\nats}$ is in the compact interval $[\min f,\max f]$, the Bolzano-Weierstrass theorem implies that at least one such subsequence exists. Let $\mu_{*}\coloneqq \lim_{k\to+\infty}\mu_{i_k}$.

We now clearly have that
\begin{equation*}
\lim_{k\to+\infty} \frac{P_{\observs\vert\states}(O_u^{i_k}\vert X_u)}{\lambda_{i_k}}\bigl(f(X_v) - \mu_{i_k}\bigr) = \phi_u(y_u\vert X_u)\bigl(f(X_v) - \mu_{*}\bigr)\,,
\end{equation*}
and therefore, by Lemma~\ref{lemma:limit_lexp_is_lexp_limit}, that
\begin{equation*}
\lim_{k\to+\infty} \lexp\left[\frac{P_{\observs\vert\states}(O_u^{i_k}\vert X_u)}{\lambda_{i_k}}\bigl(f(X_v) - \mu_{i_k}\bigr)\right] = \lexp\left[\phi_u(y_u\vert X_u)\bigl(f(X_v) - \mu_{*}\bigr)\right]\,.
\end{equation*}
Furthermore, using Equation~\eqref{eq:nat_ext_limit:steps_are_roots}, we find that $\lexp\left[\phi_u(y_u\vert X_u)\bigl(f(X_v) - \mu_{*}\bigr)\right] = 0$.

Due to Proposition~\ref{prop:GBR_properties}, and because $\lexp[\phi_u(y_u\vert X_u)]>0$, we conclude that $\mu_{*}$ corresponds to the unique root of the function $G(\mu)\coloneqq \lexp\left[\phi_u(y_u\vert X_u)\bigl(f(X_v) - \mu\bigr)\right]$. Furthermore, since the convergent subsequence $\{\mu_{i_k}\}_{k\in\nats}$ was arbitrary, we find that $\mu_*$ is the limit of \emph{every} convergent subsequence of $\{\mu_i\}_{i\in\nats}$.

We next show that $\{\mu_i\}_{i\in\nats}$ itself also converges to $\mu_*$. Assume \emph{ex absurdo} that this is false. Then, there is some $\epsilon>0$ such that, for all $n\in\nats$, there is some $k>n$ such that $\abs{\mu_k - \mu_*} \geq \epsilon$. This implies that we can construct a subsequence $\{\mu_{i_k}\}_{k\in\nats}$ such that $\abs{\mu_{i_k}-\mu_*}\geq \epsilon$ for all $k\in\nats$. This subsequence is again in the compact interval $[\min f, \max f]$, which implies that it has a convergent subsequence $\{\mu_{i_{k_\ell}}\}_{\ell\in\nats}$, and clearly $\lim_{\ell\to+\infty}\mu_{i_{k_\ell}} \neq \mu_*$ because $\abs{\mu_{i_{k_\ell}} - \mu_*}\geq\epsilon$ for all $\ell\in\nats$. However, since $\{\mu_{i_{k_\ell}}\}_{\ell\in\nats}$ is a convergent subsequence of the original sequence $\{\mu_i\}_{i\in\nats}$, this contradicts our above conclusions. Hence, we must have that indeed $\lim_{i\to\infty}\mu_i=\mu_*$.

Since we already know that $\mu_*$ is the unique root of the function $G(\mu)$ defined above, and since this function is strictly decreasing due to Proposition~\ref{prop:GBR_properties}, we must have that
\begin{equation*}
\mu_* = \max\{\mu\in\reals\,:\, G(\mu)\geq 0\}\,.
\end{equation*}
Therefore, due to Proposition~\ref{prop:GBR_for_densities_lower_zero}, we conclude that
\begin{align*}
\underline{\mathbb{E}}_\mathcal{Z}[f(X_v)\,\vert\,Y_u=y_u] &= \max\left\{\mu\in\reals\,:\, \lexp\left[\phi_u(y_u\vert X_u)\bigl(f(X_v) - \mu\bigr)\right] \geq 0\right\} \\
 &= \mu_* = \lim_{i\to+\infty} \mu_i 
 = \lim_{i\to+\infty} \underline{\mathbb{E}}_\mathcal{Z}[f(X_v)\,\vert\,Y_u\in O_u^i]\,.
\end{align*}
\end{proof}

The following two propositions prove the properties of the first of the two alternative imprecise updating methods that were suggested in Section~\ref{subsec:uncountable}.
\begin{proposition}\label{prop:regular_is_computable}
Let $\mathcal{Z}$ be an ICTHMC and consider any $u,v\in\mathcal{U}$, $y_u\in\observs_u$ and $f\in\gambles(\states_v)$. For any $\{O_u^i\}_{i\in\nats}$ in $\Sigma_u$ that shrinks to $y_u$, if for some $\{\lambda_i\}_{i\in\nats}$ in $\realspos$ the quantity $\phi_u(y_u\vert X_u)$ exists, is real-valued and satisfies $\uexp[\phi_u(y_u\vert X_u)]>0$, then the model defined by
\begin{equation*}
\underline{\mathbb{E}}_\mathcal{Z}^\mathrm{R}[f(X_v)\,\vert\,Y_u=y_u] \coloneqq \inf\left\{\mathbb{E}_P[f(X_v)\,\vert\,Y_u=y_u]\,:\,P\in\mathcal{Z},\,\mathbb{E}_{P_\states}[\phi_u(y_u\vert X_u)]>0\right\}\,,
\end{equation*}
satisfies
\begin{equation*}
\underline{\mathbb{E}}_\mathcal{Z}^\mathrm{R}[f(X_v)\,\vert\,Y_u=y_u] = \max\{\mu\in\reals\,:\,\lexp\bigl[\phi_u(y_u\vert X_u)\bigl(f(X_v)-\mu\bigr)\bigr] \geq 0\}\,.
\end{equation*}
\end{proposition}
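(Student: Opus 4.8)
The plan is to recognise this statement as yet another instance of Corollary~\ref{cor:lower_hidden_is_root_chain} — exactly as Proposition~\ref{prop:GBR_for_densities_lower_zero} was — but now invoked under the weaker positivity hypothesis $\uexp[\phi_u(y_u\vert X_u)]>0$ instead of $\lexp[\phi_u(y_u\vert X_u)]>0$. This is precisely the flexibility built into Lemma~\ref{lemma:general_regular_extension} (the engine behind Corollary~\ref{cor:lower_hidden_is_root_chain}), which only ever assumes positivity of the \emph{upper} expectation. Concretely, I would set $g(X_u)\coloneqq\phi_u(y_u\vert X_u)$ and first check that this is an admissible choice for the corollary: $g$ is real-valued by hypothesis, and $g\geq 0$ because each $P_{\observs\vert\states}(O_u^i\vert x_u)\geq 0$ and each $\lambda_i>0$, so $g$ is a pointwise limit of non-negative reals. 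The hypothesis $\uexp[\phi_u(y_u\vert X_u)]>0$ is then exactly the hypothesis $\uexp[g(X_u)]>0$ needed, and Corollary~\ref{cor:lower_hidden_is_root_chain} yields
\[
\max\{\mu\in\reals\,:\,\lexp[\phi_u(y_u\vert X_u)(f(X_v)-\mu)]\geq 0\} = \inf\left\{\frac{\mathbb{E}_{P_\states}[f(X_v)\phi_u(y_u\vert X_u)]}{\mathbb{E}_{P_\states}[\phi_u(y_u\vert X_u)]}\,:\,P\in\mathcal{Z},\,\mathbb{E}_{P_\states}[\phi_u(y_u\vert X_u)]>0\right\}.
\]

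It then remains to identify the right-hand side above with $\underline{\mathbb{E}}_\mathcal{Z}^\mathrm{R}[f(X_v)\,\vert\,Y_u=y_u]$. For this I would fix an arbitrary $P=P_{\observs\vert\states}\otimes P_\states\in\mathcal{Z}$ with $\mathbb{E}_{P_\states}[\phi_u(y_u\vert X_u)]>0$ — i.e. an arbitrary element of the index set of the infimum defining $\underline{\mathbb{E}}_\mathcal{Z}^\mathrm{R}$ — and apply Proposition~\ref{prop:precise_bayes_rule_densities}. The key point to stress is that $\phi_u(y_u\vert\cdot)$ depends only on the (shared, precise) output model $P_{\observs\vert\states}$ and on the chosen sequences $\{O_u^i\}_{i\in\nats}$ and $\{\lambda_i\}_{i\in\nats}$, and \emph{not} on $P_\states$; hence the global hypothesis that $\phi_u(y_u\vert X_u)$ exists and is real-valued, together with the process-specific condition $\mathbb{E}_{P_\states}[\phi_u(y_u\vert X_u)]>0$, is exactly what Proposition~\ref{prop:precise_bayes_rule_densities} requires. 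That proposition then gives $\mathbb{E}_P[f(X_v)\,\vert\,Y_u=y_u] = \nicefrac{\mathbb{E}_{P_\states}[f(X_v)\phi_u(y_u\vert X_u)]}{\mathbb{E}_{P_\states}[\phi_u(y_u\vert X_u)]}$. Since this holds for every $P$ in the index set, the two infima range over the same set of real numbers and are therefore equal, which, combined with the displayed equation, completes the proof.

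There is no genuinely hard step here; the proposition is essentially a corollary of material already in hand. The only things requiring care are bookkeeping points: ensuring it is the \emph{upper}-expectation positivity hypothesis $\uexp[\phi_u(y_u\vert X_u)]>0$ that feeds Corollary~\ref{cor:lower_hidden_is_root_chain} (this is what distinguishes $\underline{\mathbb{E}}_\mathcal{Z}^\mathrm{R}$ from the $\underline{\mathbb{E}}_\mathcal{Z}$ of Definition~\ref{def:reg_ext_densities} and Proposition~\ref{prop:GBR_for_densities_lower_zero}, where $\lexp[\phi_u(y_u\vert X_u)]>0$ was assumed), and verifying that every $P$ in the infimum's index set indeed satisfies the hypotheses of Proposition~\ref{prop:precise_bayes_rule_densities}. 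For completeness I would also note that this index set is non-empty precisely because $\uexp[\phi_u(y_u\vert X_u)]>0$, so that the infimum defining $\underline{\mathbb{E}}_\mathcal{Z}^\mathrm{R}$ is well-posed.
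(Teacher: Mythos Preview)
Your proposal is correct and follows essentially the same approach as the paper: set $g(X_u)\coloneqq\phi_u(y_u\vert X_u)$ in Corollary~\ref{cor:lower_hidden_is_root_chain} and apply Proposition~\ref{prop:precise_bayes_rule_densities} to rewrite each $\mathbb{E}_P[f(X_v)\,\vert\,Y_u=y_u]$ as the corresponding ratio. Your additional bookkeeping checks (non-negativity of $g$, non-emptiness of the index set, and that $\phi_u$ is shared across all $P\in\mathcal{Z}$) are all correct and simply make explicit what the paper leaves implicit.
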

\begin{proof}
This is a special case of Corollary~\ref{cor:lower_hidden_is_root_chain}, obtained by setting $g(X_u)\coloneqq \phi_u(y_u\vert X_u)$ in that corollary's statement, and applying Proposition~\ref{prop:precise_bayes_rule_densities} to the quantities $\mathbb{E}_P[f(X_v)\,\vert\,Y_u=y_u]$.
\end{proof}

\begin{proposition}\label{prop:counter_example_regular}
There exists some ICTHMC $\mathcal{Z}$, some sequences of time-points $u,v\in\mathcal{U}$, some function $f\in\gambles(\states_v)$, some $y_u\in\observs_u$ and some sequence $\{O_u^i\}_{i\in\nats}$ such that there is a sequence $\{\lambda_i\}_{i\in\nats}$ for which $\phi_u(y_u\vert X_u)$ exists, is real-valued and satisfies $\uexp[\phi_u(y_u\vert X_u)]>0$, such that for
\begin{equation*}
\underline{\mathbb{E}}_\mathcal{Z}^\mathrm{R}[f(X_v)\,\vert\, Y_u=y_u] \coloneqq \inf\left\{ \mathbb{E}_P[f(X_v)\,\vert\,Y_u=y_u]\,:\,P\in\mathcal{Z}, \mathbb{E}_P[\phi_u(y_u\vert X_u)]>0  \right\}\,,
\end{equation*}
it holds that
\begin{equation*}
\underline{\mathbb{E}}_\mathcal{Z}^\mathrm{R}[f(X_v)\,\vert\, Y_u=y_u] \neq \lim_{i\to+\infty} \underline{\mathbb{E}}_\mathcal{Z}[f(X_v)\,\vert\, Y_u\in O_u^i]\,.
\end{equation*}
\end{proposition}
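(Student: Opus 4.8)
The plan is to exhibit an explicit ICTHMC on a two-state chain, with a single point observation, in which the precise process that puts all its mass on the ``wrong'' state is retained by every finite-region conditioning $\underline{\mathbb{E}}_\mathcal{Z}[\,\cdot\,\vert\,Y_u\in O_u^i]$ but discarded by $\underline{\mathbb{E}}_\mathcal{Z}^{\mathrm R}$, which forces the two quantities to disagree. Concretely, I would take $\states=\{a,b\}$, let $\rateset$ be the singleton consisting of the zero rate matrix (which trivially has separately specified rows and is non-empty, bounded and convex), and let $\mathcal{M}=\{(p,1-p):p\in[0,1]\}$ be the vacuous credal set on $\states_0$. For the output model I take $\observs=\reals$ with its Borel $\sigma$-algebra, $\psi(\cdot\,\vert\,a)$ the uniform density on $[-1,1]$, and $\psi(\cdot\,\vert\,b)$ a continuous density that vanishes at the origin but is strictly positive on $[-1,1]\setminus\{0\}$ --- for instance $\psi(y\,\vert\,b)=\tfrac{3}{2}y^2$ for $\abs{y}\le1$ and $0$ otherwise. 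Finally I set $u=v=\{0\}$, $y_u=0$, $f(a)=1$, $f(b)=0$, $O_u^i\coloneqq(-\nicefrac1i,\nicefrac1i)$ and $\lambda_i\coloneqq\nicefrac2i=\lambda(O_u^i)$.

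First I would dispatch the hypotheses. Since both densities are continuous at $0$, Equation~\eqref{eq:density_is_limit} and the piecewise construction give that $\phi_u(y_u\,\vert\,X_0)=\phi$ exists and is real-valued for this $\{\lambda_i\}$, with $\phi(a)=\psi(0\,\vert\,a)=\nicefrac12$ and $\phi(b)=\psi(0\,\vert\,b)=0$; hence $\lexp[\phi(X_0)]=\inf_{p\in[0,1]}\nicefrac p2=0$ while $\uexp[\phi(X_0)]=\sup_{p\in[0,1]}\nicefrac p2=\nicefrac12>0$, so $\underline{\mathbb{E}}_\mathcal{Z}^{\mathrm R}$ is well-defined. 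Also, for each $i$, $\overline{P}_\mathcal{Z}(Y_0\in O_u^i)=\uexp[P_{\observs\vert\states}(O_u^i\,\vert\,X_0)]>0$ by Lemma~\ref{lemma:lower_output_probability_is_expectation} (state $a$ alone already contributes the strictly positive amount $\nicefrac1i$), so every $\underline{\mathbb{E}}_\mathcal{Z}[f(X_0)\,\vert\,Y_0\in O_u^i]$ is well-defined.

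The heart of the argument is then a short comparison. On the one hand, for every $i$ the precise process $P$ whose initial distribution is $\delta_b$ (and whose rate matrix is the zero matrix) lies in $\mathcal{Z}$, has $P(Y_0\in O_u^i)=P_{\observs\vert\states}(O_u^i\,\vert\,b)>0$ because $\psi(\cdot\,\vert\,b)$ is positive on a punctured neighbourhood of $0$, and hence contributes $\mathbb{E}_P[f(X_0)\,\vert\,Y_0\in O_u^i]=f(b)=0$ to the infimum in Definition~\ref{def:reg_ext_pos}; since $f\geq0$ that infimum is therefore exactly $0$, so $\lim_{i\to\infty}\underline{\mathbb{E}}_\mathcal{Z}[f(X_0)\,\vert\,Y_0\in O_u^i]=0$. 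On the other hand, every $P\in\mathcal{Z}$ with $\mathbb{E}_{P_\states}[\phi(X_0)]>0$ has $p=P_\states(X_0=a)>0$, and then Proposition~\ref{prop:precise_bayes_rule_densities} gives $\mathbb{E}_P[f(X_0)\,\vert\,Y_0=0]=\mathbb{E}_{P_\states}[f(X_0)\phi(X_0)]/\mathbb{E}_{P_\states}[\phi(X_0)]=f(a)=1$ (because $\phi(b)=0$), so $\underline{\mathbb{E}}_\mathcal{Z}^{\mathrm R}[f(X_0)\,\vert\,Y_0=0]=1\neq0$. If desired, I would cross-check both numbers through the generalised-Bayes formulas: Proposition~\ref{prop:regular_is_computable} gives the left-hand number as the maximum root of $\mu\mapsto\lexp[\phi(X_0)(f(X_0)-\mu)]$, which is $1$ since that gamble equals $\tfrac12(1-\mu)$ at $a$ and $0$ at $b$; and Proposition~\ref{prop:GBR_regular} gives $\underline{\mathbb{E}}_\mathcal{Z}[f(X_0)\,\vert\,Y_0\in O_u^i]$ as the maximum root of $\mu\mapsto\lexp[P_{\observs\vert\states}(O_u^i\,\vert\,X_0)(f(X_0)-\mu)]$, which is $0$ because (evaluating the lower expectation at $p=0$) this gamble has lower expectation at most $-\beta_i\mu<0$ for every $\mu>0$, with $\beta_i\coloneqq P_{\observs\vert\states}(O_u^i\,\vert\,b)>0$, while it equals $0$ at $\mu=0$.

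I expect the only genuinely delicate point to be the design of the output model, and in particular of $\psi(\cdot\,\vert\,b)$: it must vanish \emph{exactly at} the observed point $y_0$, so that the $\delta_b$-process has $\mathbb{E}_{P_\states}[\phi_u(y_u\,\vert\,X_u)]=0$ and is excluded by $\underline{\mathbb{E}}_\mathcal{Z}^{\mathrm R}$, yet it must stay strictly positive on a punctured neighbourhood of $y_0$, so that $P(Y_0\in O_u^i)>0$ for every $i$ and that same process is retained by every $\underline{\mathbb{E}}_\mathcal{Z}[\,\cdot\,\vert\,Y_0\in O_u^i]$. If $\psi(\cdot\,\vert\,b)$ vanished on a whole neighbourhood of $y_0$ the two rules would coincide, and if it were positive at $y_0$ we would have $\lexp[\phi_u(y_u\,\vert\,X_u)]>0$ and Proposition~\ref{prop:GBR_for_densities_is_limit_if_continuous} would force coincidence; so this ``boundary'' behaviour is precisely what drives the example. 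Once that density is fixed, everything else is the routine verification sketched above.
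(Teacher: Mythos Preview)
Your proposal is correct and follows essentially the same approach as the paper's proof: a binary state space, the vacuous initial credal set, a single time point $u=v=\{0\}$, and an output model where one state's density is strictly positive at $y_0$ while the other's vanishes there but is positive on a punctured neighbourhood. The paper uses $\psi(y\,\vert\,\bar{x})=\abs{y}$ and $f=(1,-1)$ where you use $\psi(y\,\vert\,b)=\tfrac{3}{2}y^2$ and $f=(1,0)$, and it leaves $\rateset$ arbitrary where you fix the zero matrix, but these are cosmetic differences; the mechanism you identify in your final paragraph is exactly the one driving the paper's counterexample.
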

\begin{proof}
Because the claim is existential, a proof by example suffices. To this end, let $\states\coloneqq\{x,\overline{x}\}$ be a binary state-space, and let $\observs\coloneqq [-1,1]$, with $\Sigma$ the Borel $\sigma$-algebra on $\observs$ under the usual topology. For the sequences of time-points, we choose $u=v=\{0\}$. Set $y_0\coloneqq 0$.

The trick will be to choose the measures $P_{\observs\vert\states}(\cdot\vert x)$ and $P_{\observs\vert\states}(\cdot\vert \overline{x})$ so that the first gives $y_0$ strictly positive support (i.e. density), while the second assigns zero support (i.e. density) to $y_0$ but positive support (i.e. density) to the region around $y_0$. To this end, let $P_{\observs\vert\states}(\cdot\vert x)$ be the uniform distribution on $[-1,1]$.

We define the measure $P_{\observs\vert\states}(\cdot\vert \overline{x})$ by constructing $\phi(\cdot\vert\overline{x})$ explicitly. So, for every $y\in\observs$, let
\begin{equation*}
\phi(y\vert\overline{x}) \coloneqq \abs{y}\,.
\end{equation*}
Clearly, $\int_\observs \phi(y\vert\overline{x})\,\mathrm{d}y = 1$, and furthermore $\phi(y_0\vert\overline{x})=0$. For every $O\in\Sigma$, let $P_{\observs\vert\states}(O\vert\overline{x})$ be defined by
\begin{equation*}
P_{\observs\vert\states}(O\vert\overline{x}) \coloneqq \int_O\phi(y\vert\overline{x})\,\mathrm{d}y\,.
\end{equation*}
For any sequence $\{O_0^i\}_{i\in\nats}$ of open intervals $O_0^i\coloneqq (y_0-\delta_i,y_0+\delta_i)$, with $\delta_i>0$ such that $\{\delta_i\}_{i\in\nats}\to0^+$, we then clearly have
\begin{equation*}
\phi(y_0\vert x) = \lim_{i\to+\infty} \frac{P_{\observs\vert\states}(O_0^i\vert x)}{\lambda(O_0^i)} = \lim_{i\to+\infty} \frac{\delta_i}{2\delta_i} = \frac{1}{2}\,,
\end{equation*}
and
\begin{equation*}
\phi(y_0\vert \overline{x}) = \lim_{i\to+\infty} \frac{P_{\observs\vert\states}(O_0^i\vert \overline{x})}{\lambda(O_0^i)} = \lim_{i\to+\infty} \frac{\delta_i^2}{2\delta_i} = \lim_{i\to+\infty} \frac{\delta_i}{2} = 0\,,
\end{equation*}
where $\lambda(O_0^i)$ is the Lebesgue measure of $O_0^i$. Fix any such sequence $\{O_0^i\}_{i\in\nats}$ and let $\{\lambda_i\}_{i\in\nats}\coloneqq\{\lambda(O_0^i)\}_{i\in\nats}$.

Let the set of initial distributions $\mathcal{M}$ be the entire set of probability mass functions on $\states_0$. Consider the two probability mass functions $P_1,P_2$ on $\states_0$ such that $P_1(x)=1$, $P_1(\overline{x})=0$, and $P_2(x)=0$, $P_2(\overline{x})=1$; clearly, $P_1,P_2\in\mathcal{M}$. Let $\rateset$ be any non-empty, bounded, and convex set of rate matrices with separately specified rows, and let $\mathbb{P}_{\rateset,\mathcal{M}}$ be the corresponding ICTMC. Construct $\mathcal{Z}$ from $\mathbb{P}_{\rateset,\mathcal{M}}$ and $(\observs,\Sigma,P_{\observs\vert\states})$ as in Definition~\ref{def:hidden_ictmc}. Let $f\in\gambles(\states_0)$ be defined by $f(x)\coloneqq 1$ and $f(\overline{x})\coloneqq -1$. 

Clearly, $\uexp[\phi(y_0\vert X_0)]=\overline{\mathbb{E}}_\mathcal{M}[\phi(y_0\vert X_0)] = P_1(x)\phi(y_0\vert x) + P_1(\overline{x})\phi(y_0\vert \overline{x})=P_1(x)\phi(y_0\vert x) = \frac{1}{2}>0$, and so the lower expectation $\underline{\mathbb{E}}_\mathcal{Z}^\mathrm{R}[f(X_v)\,\vert\, Y_u=y_u]$ is well-defined.

Now, for any $P\in\mathcal{Z}$ such that $\mathbb{E}_{P}[\phi(y_0\vert X_0)]>0$, we have by Proposition~\ref{prop:precise_bayes_rule_densities} that
\begin{equation*}
\mathbb{E}_{P}[f(X_0)\,\vert\,Y_0=y_0] = \frac{\mathbb{E}_{P}[f(X_0)\phi(y_0\vert X_0)]}{\mathbb{E}_{P}[\phi(y_0\vert X_0)]} = \frac{P(x)f(x)\phi(y_0\vert x)}{P(x)\phi(y_0\vert x)} = f(x) = 1\,,
\end{equation*}
because $\phi(y_0\vert \overline{x})=0$. Hence,
\begin{align*}
\underline{\mathbb{E}}_\mathcal{Z}^\mathrm{R}[f(X_0)\,\vert\, Y_0=y_0] &= \inf\left\{ \mathbb{E}_{P}[f(X_0)\,\vert\,Y_0=y_0]\,:\,P\in\mathcal{Z}, \mathbb{E}_{P}[\phi(y_0\vert X_0)]>0  \right\} = 1\,.
\end{align*}
However, for any $i\in\nats$, since $P_2\in\mathcal{M}$, there is some $P\in\mathcal{Z}$ such that 
\begin{equation*}
P(Y_0\in O_0^i)=\mathbb{E}_{P_\states}[P_{\observs\vert\states}(O_0^i\vert X_0)]=\mathbb{E}_{P_2}[P_{\observs\vert\states}(O_0^i\vert X_0)]=P_{\observs\vert\states}(O_0^i\vert \overline{x})>0,
\end{equation*}
and so
\begin{align*}
\mathbb{E}_{P}[f(X_0)\,\vert\,Y_0\in O_0^i] = \frac{\mathbb{E}_{P_\states}[f(X_0)P_{\observs\vert\states}(O_0^i\vert X_0)]}{\mathbb{E}_{P_\states}[P_{\observs\vert\states}(O_0^i\vert X_0)]} = \frac{f(\overline{x})P_{\observs\vert\states}(O_0^i\vert \overline{x})}{P_{\observs\vert\states}(O_0^i\vert \overline{x})} = f(\overline{x}) = -1.
\end{align*}
Since we also know that $-1=\min f \leq \underline{\mathbb{E}}_\mathcal{Z}[f(X_0)\vert Y_0\in O_0^i] \leq \mathbb{E}_{P}[f(X_0)\,\vert\,Y_0\in O_0^i]$, this allows us to infer that $\underline{\mathbb{E}}_\mathcal{Z}[f(X_0)\vert Y_0\in O_0^i]=-1$.

We conclude from all of the above that
\begin{equation*}
-1 =  \lim_{i\to+\infty} \underline{\mathbb{E}}_\mathcal{Z}[f(X_0)\vert Y_0\in O_0^i] \neq \underline{\mathbb{E}}_\mathcal{Z}^\mathrm{R}[f(X_0)\,\vert\, Y_0=y_0] = 1\,.
\end{equation*}
\end{proof}

The following result proves the claim that the \emph{second} alternative imprecise updating method from Section~\ref{subsec:uncountable} does not in general satisfy the generalised Bayes' rule for mixtures of densities.
\begin{corollary}
There exists some ICTHMC $\mathcal{Z}$, some sequences of time-points $u,v\in\mathcal{U}$, some function $f\in\gambles(\states_v)$, some $y_u\in\observs_u$ and some sequence $\{O_u^i\}_{i\in\nats}$ such that
\begin{equation*}
\{P\in\mathcal{Z}\,:\,\text{$\mathbb{E}_P[f(X_v)\,\vert\,Y_u=y_u]$ exists}\}\neq\emptyset
\end{equation*}
and
\begin{equation*}
\underline{\mathbb{E}}_\mathcal{Z}^\mathrm{L}[f(X_v)\,\vert\, Y_u=y_u] \neq \max\{\mu\in\reals\,:\,\lexp\bigl[\phi_u(X_u)\bigl(f(X_v)-\mu\bigr)\bigr]\geq 0\},
\end{equation*}
with
\begin{equation*}
\underline{\mathbb{E}}_\mathcal{Z}^\mathrm{L}[f(X_v)\,\vert\, Y_u=y_u]\coloneqq \inf\bigl\{\mathbb{E}_P[f(X_v)\,\vert\,Y_u=y_u]\,:\,P\in\mathcal{Z},\,\text{$\mathbb{E}_P[f(X_v)\,\vert\,Y_u=y_u]$ exists}\bigr\}
\end{equation*}
\end{corollary}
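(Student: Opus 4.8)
The plan is to prove this existential statement by exhibiting a counterexample, and in fact the construction from the proof of Proposition~\ref{prop:counter_example_regular} serves directly. I would re-use all of its ingredients: the binary state-space $\states=\{x,\overline{x}\}$; the outcome space $\observs=[-1,1]$ with its Borel $\sigma$-algebra; the time points $u=v=\{0\}$ and the point $y_0=0$; the output model with $P_{\observs\vert\states}(\cdot\vert x)$ uniform on $[-1,1]$ (so $\psi(\cdot\vert x)\equiv\nicefrac{1}{2}$) and $\phi(\cdot\vert\overline{x})=\abs{\cdot}$ (so that $\phi(y_0\vert\overline{x})=0$); the vacuous set $\mathcal{M}$ of all mass functions on $\states_0$; the gamble $f$ with $f(x)=1$ and $f(\overline{x})=-1$; and the sequence $\{O_0^i\}_{i\in\nats}$ of symmetric open intervals $O_0^i=(-\delta_i,\delta_i)$ with $\delta_i\downarrow 0$, together with $\lambda_i\coloneqq\lambda(O_0^i)=2\delta_i$. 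Recall from that proof that $\phi(y_0\vert x)=\nicefrac{1}{2}$ and $\phi(y_0\vert\overline{x})=0$, so $\uexp[\phi(y_0\vert X_0)]=\nicefrac{1}{2}>0$ while $\lexp[\phi(y_0\vert X_0)]=0$.

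The one genuinely new step is to show that $\mathbb{E}_P[f(X_0)\,\vert\,Y_0=y_0]$ exists for \emph{every} $P\in\mathcal{Z}$, so that the defining set of $\underline{\mathbb{E}}_\mathcal{Z}^{\mathrm{L}}$ is all of $\mathcal{Z}$, hence non-empty. Writing $p\coloneqq P_\states(X_0)$, one computes $P_{\observs\vert\states}(O_0^i\vert x)=\delta_i$ and $P_{\observs\vert\states}(O_0^i\vert\overline{x})=\delta_i^2$, so that $\mathbb{E}_{P_\states}[P_{\observs\vert\states}(O_0^i\vert X_0)]=p(x)\delta_i+p(\overline{x})\delta_i^2>0$ for \emph{all} $i\in\nats$, because $p(x)+p(\overline{x})=1$. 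Thus every $\mathbb{E}_P[f(X_0)\,\vert\,Y_0\in O_0^i]$ is well-defined by Proposition~\ref{prop:precise_conditioning_for_positive} and equals $\bigl(p(x)-p(\overline{x})\delta_i\bigr)/\bigl(p(x)+p(\overline{x})\delta_i\bigr)$; letting $i\to+\infty$, this converges to $1$ if $p(x)>0$ and is identically $-1$ if $p(x)=0$. In either case the limit in Equation~\eqref{eq:def:precise_updated_limit} exists, so $\mathbb{E}_P[f(X_0)\,\vert\,Y_0=y_0]$ exists for all $P\in\mathcal{Z}$.

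It then remains to compare the two sides. The values $\mathbb{E}_P[f(X_0)\,\vert\,Y_0=y_0]$ obtained above range over $\{-1,1\}$, and the value $-1$ is attained (by any $P$ whose initial mass function $p$ has $p(x)=0$, which is admissible since $\mathcal{M}$ is vacuous), so $\underline{\mathbb{E}}_\mathcal{Z}^{\mathrm{L}}[f(X_0)\,\vert\,Y_0=y_0]=\inf\{\mathbb{E}_P[f(X_0)\,\vert\,Y_0=y_0]:P\in\mathcal{Z}\}=-1$. On the other hand, since $\uexp[\phi(y_0\vert X_0)]>0$, Proposition~\ref{prop:regular_is_computable} gives that $\max\{\mu\in\reals:\lexp[\phi(y_0\vert X_0)(f(X_0)-\mu)]\geq 0\}$ equals $\underline{\mathbb{E}}_\mathcal{Z}^{\mathrm{R}}[f(X_0)\,\vert\,Y_0=y_0]$, which was shown to equal $1$ in the proof of Proposition~\ref{prop:counter_example_regular}. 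Since $-1\neq 1$, the two rules disagree, which is exactly what the corollary asserts.

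The only delicate point is the middle step: one must verify that $\mathbb{E}_{P_\states}[P_{\observs\vert\states}(O_0^i\vert X_0)]>0$ for \emph{all} $P$ and \emph{all} $i$, not merely eventually, so that each conditional expectation along the sequence is defined and the limit genuinely exists even for the degenerate process with $p(x)=0$. It is precisely this process, retained by the $\mathrm{L}$-rule but satisfying $\mathbb{E}_{P_\states}[\phi(y_0\vert X_0)]=0$, that pulls $\underline{\mathbb{E}}_\mathcal{Z}^{\mathrm{L}}$ away from the value delivered by the generalised Bayes' rule.
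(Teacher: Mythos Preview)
Your proposal is correct and follows essentially the same approach as the paper: you reuse the counterexample from Proposition~\ref{prop:counter_example_regular}, show that $\mathbb{E}_P[f(X_0)\,\vert\,Y_0=y_0]$ exists for \emph{every} $P\in\mathcal{Z}$ (taking the value $1$ when $p(x)>0$ and $-1$ when $p(x)=0$), and then contrast $\underline{\mathbb{E}}_\mathcal{Z}^{\mathrm{L}}=-1$ with the generalised Bayes' rule value $\underline{\mathbb{E}}_\mathcal{Z}^{\mathrm{R}}=1$ via Proposition~\ref{prop:regular_is_computable}. The only stylistic difference is that the paper argues by a case split on whether $\mathbb{E}_{P_\states}[\phi(y_0\vert X_0)]>0$ or $=0$, whereas you compute the ratio $\bigl(p(x)-p(\overline{x})\delta_i\bigr)/\bigl(p(x)+p(\overline{x})\delta_i\bigr)$ uniformly in $p$ and then read off the two cases from the limit; this is a minor presentational variation, not a different route.
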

\begin{proof}
The claim is existential, so a proof by example suffices. Let $\mathcal{Z}$, $u,v\in\mathcal{U}$, $y_u\in\observs_u$, $f\in\gambles(\states_v)$ and $\{O_u^i\}_{i\in\nats}$ in $\Sigma_u$ be the same as in the example/proof of Proposition~\ref{prop:counter_example_regular}.

Now consider any $P\in\mathcal{Z}$, and note that clearly either $\mathbb{E}_{P_\states}[\phi(y_0\vert X_0)]>0$, or $\mathbb{E}_{P_\states}[\phi(y_0\vert X_0)]=0$. For the first case, as we established in the example/proof of Proposition~\ref{prop:counter_example_regular}, we know that $\mathbb{E}_P[f(X_0)\vert Y_0=y_0]$ exists and is equal to $1$.

Suppose for the other case that $\mathbb{E}_{P_\states}[\phi(y_0\vert X_0)]=0$. Because $\phi(y_0\vert x)>0$ and $\phi(y_0\vert \overline{x})=0$, this clearly implies that $P_\states(X_0=x)=0$, or in other words, that $P_\states(X_0=\overline{x})=1$. As we already established in the previous example, we then have for every $i\in\nats$ that
\begin{equation*}
\mathbb{E}_P[f(X_0)\vert Y_0\in O_0^i] = \frac{f(\overline{x})P_{\observs\vert\states}(O_0^i\vert \overline{x})}{P_{\observs\vert\states}(O_0^i\vert \overline{x})} = -1\,.
\end{equation*}
Because this holds for all $\in\nats$, we clearly have that $\mathbb{E}_P[f(X_0)\vert Y_0 = y_0]$ exists, and
\begin{equation*}
\mathbb{E}_P[f(X_0)\vert Y_0=y_0] = \lim_{i\to+\infty} \mathbb{E}_P[f(X_0)\vert Y_0\in O_0^i] = -1\,.
\end{equation*}
Since this exhaustively covers all cases, we conclude that $\mathbb{E}_P[f(X_0)\vert Y_0=y_0]$ exists for all $P\in\mathcal{Z}$. Therefore,
\begin{equation*}
\{P\in\mathcal{Z}\,:\,\text{$\mathbb{E}_P[f(X_v)\,\vert\,Y_u=y_u]$ exists}\} = \mathcal{Z}\,,
\end{equation*}
which means that $\underline{\mathbb{E}}_\mathcal{Z}^\mathrm{L}[f(X_v)\,\vert\, Y_u=y_u]$ is well-defined.

Furthermore, it follows from the above that, for all $P\in\mathcal{Z}$, we have either $\mathbb{E}_P[f(X_0)\vert Y_0=y_0]=1$, or $\mathbb{E}_P[f(X_0)\vert Y_0=y_0]=-1$. Also, since $P_2\in\mathcal{M}$, there is at least one $P\in\mathcal{Z}$ for which the second case applies. Hence, it follows that
\begin{align*}
\underline{\mathbb{E}}_\mathcal{Z}^\mathrm{L}[f(X_v)\,\vert\, Y_u=y_u] &\coloneqq \inf\bigl\{\mathbb{E}_P[f(X_v)\,\vert\,Y_u=y_u]\,:\,P\in\mathcal{Z},\,\text{$\mathbb{E}_P[f(X_v)\,\vert\,Y_u=y_u]$ exists}\bigr\}= -1.
\end{align*}
However, we also know that, for any $P\in\mathcal{Z}$, if $\mathbb{E}_{P_\states}[\phi(y_0\vert X_0)]>0$, then $\mathbb{E}_{P}[f(X_0)\vert Y_0=y_0]=1$. Since $P_1\in\mathcal{M}$, there is at least one $P\in\mathcal{Z}$ for which this holds. Therefore,
\begin{align*}
1 &= \inf\left\{\mathbb{E}_P[f(X_v)\,\vert\,Y_u=y_u]\,:\,P\in\mathcal{Z},\,\mathbb{E}_{P_\states}[\phi(y_0\vert X_0)]>0\right\} \\
 &= \max\{\mu\in\reals\,:\,\lexp\bigl[\phi_u(X_u)\bigl(f(X_v)-\mu\bigr)\bigr]\geq 0\}\,,
\end{align*}
where the second equality follows from Proposition~\ref{prop:regular_is_computable}. We conclude that indeed
\begin{equation*}
-1 = \underline{\mathbb{E}}_\mathcal{Z}^\mathrm{L}[f(X_v)\,\vert\, Y_u=y_u] \neq 
\max\{\mu\in\reals\,:\,\lexp\bigl[\phi_u(X_u)\bigl(f(X_v)-\mu\bigr)\bigr]\geq 0\} = 1\,.
\end{equation*}
\end{proof}

\section{Proofs of the Results in Section~\ref{sec:inference_algos}}

We need the following lemma for the proof of Proposition~\ref{prop:computing_product_funcs}.
\begin{lemma}\label{lemma:product_func_induction}
For all $i\in\{0,\dots,n\}$, let $g_{t_i}$, $g_{t_i}^+$ and $g_{t_i}^-$ be as defined in Section~\ref{sec:funcs_single_time}. Then for all $i\in\{0,\dots,n\}$:
\begin{equation*}
g_{t_i}^+ = \underline{\mathbb{E}}_{\rateset}\left[\prod_{j=i}^{n}g_{t_j}(X_{t_j})\,\Bigg\vert\,X_{t_i}\right] \quad\quad\text{and} \quad\quad g_{t_i}^- = \overline{\mathbb{E}}_{\rateset}\left[\prod_{j=i}^{n}g_{t_j}(X_{t_j})\,\Bigg\vert\,X_{t_i}\right]\,.
\end{equation*}
\end{lemma}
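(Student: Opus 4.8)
The plan is to establish both identities simultaneously by backward induction on $i$, running from $i=n$ down to $i=0$. The base case $i=n$ is immediate: by definition $g_{t_n}^+=g_{t_n}^-=g_{t_n}$, and conditioning on $X_{t_n}$ makes $g_{t_n}(X_{t_n})$ a known quantity, so $\underline{\mathbb{E}}_\rateset[g_{t_n}(X_{t_n})\mid X_{t_n}]=g_{t_n}(X_{t_n})=\overline{\mathbb{E}}_\rateset[g_{t_n}(X_{t_n})\mid X_{t_n}]$. Throughout, the case split on the sign of $g_{t_i}$ in the definitions of $g_{t_i}^+$ and $g_{t_i}^-$ is exactly the device that keeps track of the flipping between lower and upper expectations when the (possibly negative) multiplicative constant $g_{t_i}(x_{t_i})$ is factored out.

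For the inductive step, assume the two identities hold for index $i+1$, with $i\in\{0,\dots,n-1\}$, and fix $x_{t_i}\in\states_{t_i}$. Write $h\coloneqq\prod_{j=i}^{n}g_{t_j}(X_{t_j})$. First I would introduce an intermediate conditioning on $X_{t_{i+1}}$: if $i\leq n-2$ this is Lemma~\ref{lemma:iterated_lower} applied to the groups $\{t_i\}<\{t_{i+1}\}<\{t_{i+2},\dots,t_n\}$, while if $i=n-1$ it holds trivially because $h$ is already a function of $(X_{t_{n-1}},X_{t_n})$; in both cases
\[
\underline{\mathbb{E}}_\rateset[h\mid X_{t_i}=x_{t_i}] = \underline{\mathbb{E}}_\rateset\bigl[\,\underline{\mathbb{E}}_\rateset[h\mid X_{t_i}=x_{t_i},X_{t_{i+1}}]\,\big\vert\,X_{t_i}=x_{t_i}\bigr].
\]
In the inner expectation, conditioning on $X_{t_i}=x_{t_i}$ turns the leading factor $g_{t_i}(X_{t_i})$ into the constant $g_{t_i}(x_{t_i})$; pulling this constant out — by non-negative homogeneity when $g_{t_i}(x_{t_i})\geq0$, and using $\underline{\mathbb{E}}_\rateset[c\,\cdot\,]=c\,\overline{\mathbb{E}}_\rateset[\cdot\,]$ (a consequence of conjugacy and non-negative homogeneity) when $g_{t_i}(x_{t_i})<0$ — and then discarding the now-vacuous conditioning on $X_{t_i}$ via the imprecise Markov property, the inner expectation equals $g_{t_i}(x_{t_i})\,\underline{\mathbb{E}}_\rateset[\prod_{j=i+1}^{n}g_{t_j}(X_{t_j})\mid X_{t_{i+1}}]$ if $g_{t_i}(x_{t_i})\geq0$ and $g_{t_i}(x_{t_i})\,\overline{\mathbb{E}}_\rateset[\prod_{j=i+1}^{n}g_{t_j}(X_{t_j})\mid X_{t_{i+1}}]$ otherwise. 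Invoking the induction hypothesis for $i+1$, these become $g_{t_i}(x_{t_i})\,g_{t_{i+1}}^+(X_{t_{i+1}})$ and $g_{t_i}(x_{t_i})\,g_{t_{i+1}}^-(X_{t_{i+1}})$ respectively. Substituting back into the outer expectation $\underline{\mathbb{E}}_\rateset[\,\cdot\mid X_{t_i}=x_{t_i}]$ and factoring out the constant $g_{t_i}(x_{t_i})$ one more time (again with the sign-dependent switch between $\underline{\mathbb{E}}_\rateset$ and $\overline{\mathbb{E}}_\rateset$), we land exactly on the two-branch defining expression for $g_{t_i}^+(x_{t_i})$. The companion identity $g_{t_i}^-=\overline{\mathbb{E}}_\rateset[\prod_{j=i}^{n}g_{t_j}(X_{t_j})\mid X_{t_i}]$ follows by the same computation carried out for the conjugate (upper-expectation) form of the iterated-expectation identity, with the two sign branches interchanged.

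I expect the main obstacle to be the bookkeeping of signs rather than anything conceptually deep: since the $g_{t_i}$ need not be non-negative (unlike the factors arising directly from the output likelihoods), each time the constant $g_{t_i}(x_{t_i})$ is pulled out of a conditional lower expectation it may flip it to an upper one, and this happens at two nested levels — the conditional on $X_{t_{i+1}}$ inside, and the conditional on $X_{t_i}$ outside. Checking that all four sign combinations collapse correctly onto the single $g^{+}$-branch of the recursion, and noting that the degenerate case $g_{t_i}(x_{t_i})=0$ is harmless (it falls into the non-negative branch, with both sides equal to zero), is the crux. A secondary point requiring care is that discarding the conditioning on $X_{t_i}$ uses the imprecise Markov property for a gamble depending on several future time-points; this is part of the ICTMC machinery underlying Lemma~\ref{lemma:iterated_lower}, but it should be invoked explicitly rather than read off the single-time-point statement recalled in Section~\ref{subsec:ictmc}.
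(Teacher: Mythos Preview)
Your proposal is correct and follows essentially the same approach as the paper: backward induction on $i$, with the inductive step handled via Lemma~\ref{lemma:iterated_lower}, a case split on the sign of $g_{t_i}(x_{t_i})$, and non-negative homogeneity together with conjugacy to pull the constant through. The only cosmetic difference is direction: the paper starts from the defining recursion for $g_{t_{i-1}}^+$ and works towards the conditional lower expectation, so the constant is factored out only once and the Markov-property step is absorbed into the single invocation of Lemma~\ref{lemma:iterated_lower}; your route starts from the expectation, so you factor the same constant $g_{t_i}(x_{t_i})$ twice (inner and outer), which incidentally means there are only two sign cases to check rather than four.
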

\begin{proof}
We provide a proof by induction. Clearly, the result is trivial for $i=n$. So, assume that it is true for $i$. We show that it is then also true for $i-1$ (with $i>0$).

We focus on $g_{t_{i-1}}^+$, and consider the two cases in its definition separately. So, consider any $x_{t_{i-1}}\in\states_{t_{i-1}}$. Then, if $g_{t_{i-1}}(x_{t_{i-1}})\geq 0$, we have
\begin{align*}
g_{t_{i-1}}^+(x_{t_{i-1}}) &= g_{t_{i-1}}(x_{t_{i-1}}) \underline{\mathbb{E}}_{\rateset}\left[g_{t_{i}}^+(X_{t_{i}})\,\vert\,X_{t_{i-1}}=x_{t_{i-1}}\right] \\
 &= g_{t_{i-1}}(x_{t_{i-1}})\underline{\mathbb{E}}_{\rateset}\left[\underline{\mathbb{E}}_{\rateset}\left[\prod_{j=i}^{n}g_{t_j}(X_{t_j})\,\Bigg\vert\,X_{t_i}\right]\,\Bigg\vert\,X_{t_{i-1}}=x_{t_{i-1}}\right] \\
 &= g_{t_{i-1}}(x_{t_{i-1}})\underline{\mathbb{E}}_{\rateset}\left[\prod_{j=i}^{n}g_{t_j}(X_{t_j})\,\Bigg\vert\,X_{t_{i-1}}=x_{t_{i-1}}\right] \\ 
 &= \underline{\mathbb{E}}_{\rateset}\left[g_{t_{i-1}}(x_{t_{i-1}})\prod_{j=i}^{n}g_{t_j}(X_{t_j})\,\Bigg\vert\,X_{t_{i-1}}=x_{t_{i-1}}\right]\,,
\end{align*}
where the first equality is by definition, the second is by the induction hypothesis, the third by iterated lower expectation (Lemma~\ref{lemma:iterated_lower}), and the final by the non-negative homogeneity of lower expectations and the assumption that $g_{t_{i-1}}(x_{t_{i-1}})\geq 0$.

For the other case, assume that $g_{t_{i-1}}(x_{t_{i-1}})< 0$. Then,
\begin{align*}
g_{t_{i-1}}^+(x_{t_{i-1}}) &= g_{t_{i-1}}(x_{t_{i-1}})\overline{\mathbb{E}}_{\rateset}\left[g_{t_{i}}^-(X_{t_{i}})\,\vert\,X_{t_{i-1}}=x_{t_{i-1}}\right] \\
&= g_{t_{i-1}}(x_{t_{i-1}})\overline{\mathbb{E}}_{\rateset}\left[ \overline{\mathbb{E}}_{\rateset}\left[\prod_{j=i}^{n}g_{t_j}(X_{t_j})\,\Bigg\vert\,X_{t_i}\right]\,\Bigg\vert\,X_{t_{i-1}}=x_{t_{i-1}}\right] \\
&= g_{t_{i-1}}(x_{t_{i-1}})\overline{\mathbb{E}}_{\rateset}\left[\prod_{j=i}^{n}g_{t_j}(X_{t_j})\,\Bigg\vert\,X_{t_{i-1}}=x_{t_{i-1}}\right] \\
&= -g_{t_{i-1}}(x_{t_{i-1}})\underline{\mathbb{E}}_{\rateset}\left[-\prod_{j=i}^{n}g_{t_j}(X_{t_j})\,\Bigg\vert\,X_{t_{i-1}}=x_{t_{i-1}}\right] \\
&= \underline{\mathbb{E}}_{\rateset}\left[g_{t_{i-1}}(x_{t_{i-1}})\prod_{j=i}^{n}g_{t_j}(X_{t_j})\,\Bigg\vert\,X_{t_{i-1}}=x_{t_{i-1}}\right]\,,
\end{align*}
where the first equality is by definition, the second equality by the induction hypothesis, the third by iterated upper expectation (Lemma~\ref{lemma:iterated_lower} combined with conjugacy), the fourth by conjugacy of upper- and lower expectation, and the final by the non-negative homogeneity of lower expectations and the assumption that $g_{t_{i-1}}(x_{t_{i-1}})<0$.

Since this covers both cases in the definition of $g_{t_{i-1}}^+(x_{t_{i-1}})$, we find that
\begin{equation*}
g_{t_{i-1}}^+(X_{t_{i-1}}) = \underline{\mathbb{E}}_{\rateset}\left[g_{t_{i-1}}(X_{t_{i-1}})\prod_{j=i}^{n}g_{t_j}(X_{t_j})\,\Bigg\vert\,X_{t_{i-1}}\right] = \underline{\mathbb{E}}_{\rateset}\left[\prod_{j={i-1}}^{n}g_{t_j}(X_{t_j})\,\Bigg\vert\,X_{t_{i-1}}\right],
\end{equation*}
which concludes the proof for $g_{t_{i-1}}^+$. The proof for $g_{t_{i-1}}^-$ is completely analogous.
\end{proof}

\begin{proof}{\bf of Proposition~\ref{prop:computing_product_funcs}~}
By combining Lemma~\ref{lemma:product_func_induction} with iterated lower expectation (Lemma~\ref{lemma:iterated_lower}), we find that
\begin{equation*}
\lexp\left[g_{t_0}^+(X_{t_0})\right] = \lexp\left[\underline{\mathbb{E}}_{\rateset}\left[\prod_{j=0}^{n}g_{t_j}(X_{t_j})\,\Bigg\vert\,X_{t_0}\right]\right] = \lexp\left[\prod_{j=0}^{n}g_{t_j}(X_{t_j})\right] = \lexp\left[\prod_{t\in u'}g_{t}(X_{t})\right]\,,
\end{equation*}
and similarly for the upper expectation.
\end{proof}

\end{document}